\tikzset{C/.style={circle, minimum size=8mm,
                   node contents={},
                   append after command={\pgfextra{%
        \draw[-{Straight Barb[flex']}](\tikzlastnode.150) arc (150:450:4mm);}
                }}
        }
\newtheorem{cor*}{Corollary}
\newtheorem{prop*}{Proposition}
\newtheorem{theorem}{Theorem}[section]
\newtheorem{cor}{Corollary}[theorem]
\newtheorem{prop}[theorem]{Proposition}
\newtheorem{lem}[theorem]{Lemma}
\theoremstyle{definition}
\newtheorem{defn}[theorem]{Definition}
\newtheorem{exmp}[theorem]{Example}
\newtheorem{rem}[theorem]{Remark}
\newcommand{\F}{\mathcal{F}}
\newcommand{\G}{\mathcal{G}}
\newcommand{\Z}{\mathbb{Z}}
\newcommand{\N}{\mathbb{N}}
\newcommand{\Orb}{\mathcal{O}}
\newcommand{\D}{\mathscr{D}}
\newcommand{\wlp}{(\mathscr{D}_a,(D_{\overline{G}},\Pi_{\overline{G}}))}
\DeclareMathOperator{\sig}{\mathrm{sig}}
\DeclareMathOperator{\lcm}{lcm}
\DeclareMathOperator{\map}{\mathrm{Mod}}
\DeclareMathOperator{\homeo}{\mathrm{Homeo}^{+}}
\begin{document}

\title[Alternating and symmetric actions on surfaces]{Alternating and symmetric \\ actions on surfaces}

\author{Rajesh Dey}
\address{Department of Mathematics\\
Indian Institute of Science Education and Research Bhopal\\
Bhopal Bypass Road, Bhauri \\
Bhopal 462 066, Madhya Pradesh\\
India}
\email{rajesh17@iiserb.ac.in}

\author{Kashyap Rajeevsarathy}
\address{Department of Mathematics\\
Indian Institute of Science Education and Research Bhopal\\
Bhopal Bypass Road, Bhauri \\
Bhopal 462 066, Madhya Pradesh\\
India}
\email{kashyap@iiserb.ac.in}
\urladdr{https://home.iiserb.ac.in/$_{\widetilde{\phantom{n}}}$kashyap/}

\subjclass[2020]{Primary 57K20, Secondary 57M60}

\keywords{surface, mapping class, symmetric group, alternating group}

\begin{abstract}
Let $\mathrm{Mod}(S_g)$ be the mapping class group of the closed orientable surface of genus $g \geq 2$. In this article, we derive necessary and sufficient conditions under which two torsion elements in $\mathrm{Mod}(S_g)$ will have conjugates that generate a finite symmetric or an alternating subgroup of $\mathrm{Mod}(S_g)$. Furthermore, we characterize when an involution would lift under the branched cover induced by an alternating action on $S_g$. Moreover, up to conjugacy, we derive conditions under which a given periodic mapping class is contained in a symmetric or an alternating subgroup of $\mathrm{Mod}(S_g)$. In particular, we show that symmetric or alternating subgroups can not contain irreducible mapping classes and hyperelliptic involutions. Finally, we classify the symmetric and alternating actions on $S_{10}$ and $S_{11}$ up to a certain equivalence we call weak conjugacy.
\end{abstract}

\maketitle

\section{Introduction} 
\label{sec:intro}
Let $S_g$ be the closed, connected, and orientable surface of genus $g$. Let $\homeo(S_g)$ be the group of orientation-preserving homeomorphisms of $S_g$, and let $\map(S_g) := \pi_0(\homeo(S_g))$ be the mapping class group of $S_g$. Throughout this article, we will assume that $g\geq2$ unless specified otherwise. Given $F, G \in \map(S_g)$ of finite order, a pair of conjugates $F'$, $G'$ 
(of $F, G$ resp.) may or may not generate a subgroup isomorphic to $\langle F, G \rangle$. For example, consider the periodic mapping classes $F, G \in \map(S_5)$ represented by homeomorphisms $\F, \G \in \homeo(S_5)$ where $\F$ is a $\pi/2$ and $\G$ is a $\pi$ rotation of $S_5$ embedded in $\mathbb{R}^3$ in the shape of a cube (see Figure \ref{fig:cube}) with the origin $O$ as its center. Consider another pair of periodic mapping classes $F', G' \in \map(S_5)$ represented by homeomorphisms $\F, \G' \in \homeo(S_5)$ where $\mathcal{G'}$ is the $\pi$ rotation as shown in Figure \ref{fig:cube} below.
\begin{figure}[ht]
\labellist
\tiny
\pinlabel $O$ at 303 488
\pinlabel $X$ at 390 495
\pinlabel $\F$ at 500 500
\pinlabel $\pi/2$ at 480 560
\pinlabel $Y$ at 245 416
\pinlabel $\G$ at 234 375
\pinlabel $\pi$ at 165 391
\pinlabel $Z$ at 272 587
\pinlabel $\G'$ at 241 665
\pinlabel $\pi$ at 200 612
\endlabellist
\centering
 \includegraphics[scale=.4]{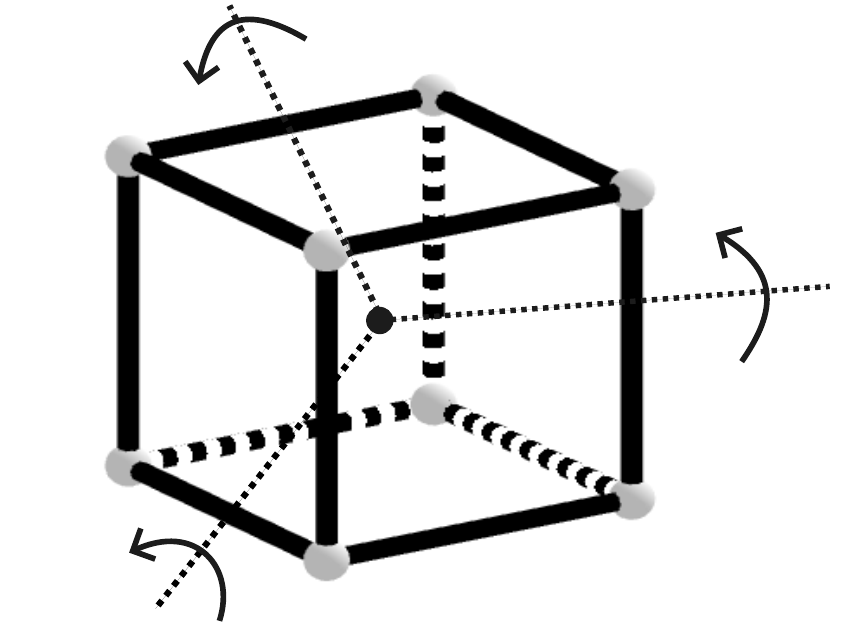}
  \caption{An $\Sigma_4$-action on $S_5$.}
  \label{fig:cube}
\end{figure}
\noindent Since the group of symmetries of a cube is $\Sigma_4$, it is apparent that $\langle \F,\G\rangle\cong \langle (1~2~3~4),(1~2)\rangle\cong \Sigma_4$ and $\langle \F,\G'\rangle\cong \langle (1~2~3~4),(1~3)\rangle\cong D_8$ but $\G'$ is conjugate to $\G$. As any finite subgroup of $\homeo(S_g)$ embeds into $\map(S_g)$ under the natural projection map, it follows that $\langle F,G \rangle\cong\Sigma_4$ and $\langle F',G'\rangle\cong D_8$. This example leads to the following natural question: Given $F, G \in \map(S_g)$ of finite orders, can one derive necessary and sufficient combinatorial conditions under which there exist conjugates $F'$, $G'$ (of $F, G$ resp.) such that $\langle F', G'\rangle$ is isomorphic to a chosen two-generator finite group. This question has already been answered affirmatively for two-generator abelian groups in \cite{Dhanwani} and metacyclic groups in \cite{apeksha,apeksha2}.

The main results of this paper (in Section~\ref{sec:main}) provide an affirmative answer to this question for alternating (see Theorem \ref{thm:main1}) and symmetric groups (see Theorem \ref{thm:main2}). By the Nielsen-Kerckhoff theorem~\cite{nielsen,kerckhoff}, the conjugacy classes of periodic maps in $\homeo(S_g)$ are in one-to-one correspondence with the conjugacy classes of periodic mapping classes in $\map(S_g)$ via the natural projection map. Thus, a key ingredient in the proof of the main results (see Proposition \ref{prop:main1} and Proposition \ref{prop:main2}) is a combinatorial characterization of alternating (and symmetric) actions on $S_g$ up to certain equivalence called \textit{weak conjugacy} (see Definition \ref{defn:wk_conjugacy}) that arises naturally in our context. We use the theory of finite group actions on surfaces \cite{macbeath,broughton}, Thurston's orbifold theory \cite[Chapter 16]{Thurston}, and Ore's Theorem~\cite{Ore} to prove our main result.

The problem of liftability of the mapping classes originated from the pioneering works of Birman-Hilden in the early 1970s that started with~\cite{birman1971} and concluded with \cite{birman} (see also~\cite{margalit-winarski}). Since then, though the liftable mapping class groups under finite abelian covers have been widely studied \cite{broughton_normalizer,ghaswala_superelliptic,ghaswala_sphere,pankaj}, such an analysis is yet to be undertaken for covers induced by finite simple groups. This motivates our pursuit in Section \ref{section:involutions}, where we use our main results and an algebraic analog of liftability criterion, due to Broughton (see  \cite[Theorem 3.2.]{broughton_normalizer}), to characterize $A_n\rtimes \Z_2$-action as an extension of $A_n$-action on $S_g$.  To this end, we derive necessary and sufficient conditions for the conjugacy class of a $\Z_2$-action to lift under an alternating branched cover to the weak conjugacy class of a $\Sigma_n$-action on $S_g$ (see Theorem \ref{thm:wls}). We also provide a sufficient condition for this $\Z_2$-action to lift to a $A_n\times \Z_2$-action on $S_g$ (see Theorem \ref{thm:alternating_extensions}). As a consequence of these results, we establish the existence of several alternating subgroups of $\map(S_g)$ that are self-normalizing (see Corollary~\ref{cor:self_normalizing}). In particular, we have the following. 
\begin{cor*}
An $A_n$-action on $S_g$ whose quotient orbifold is a sphere with three cone points of distinct orders does not extend either to a $\Sigma_n$-action or an $A_n \times \Z_2$-action.
\end{cor*}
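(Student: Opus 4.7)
The plan is to argue by contradiction using the rigidity of the quotient orbifold $\Orb = S_g/A_n \cong S^2(m_1,m_2,m_3)$ with pairwise distinct $m_i$. Suppose the $A_n$-action extends to a $G$-action on $S_g$ with $G \in \{\Sigma_n,\, A_n \times \Z_2\}$; in both cases $A_n \trianglelefteq G$ and $G/A_n \cong \Z_2$. By Nielsen--Kerckhoff, we may realize $G$ as a finite subgroup of $\homeo^+(S_g)$, so the $G$-action descends to an action of $G/A_n \cong \Z_2$ on $\Orb$.

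The first key step is \emph{orbifold rigidity}: I would show that the orientation-preserving orbifold mapping class group of $\Orb$ is trivial. Indeed, any orbifold self-homeomorphism must permute the cone points preserving their orders; since the three orders are pairwise distinct, each cone point is fixed. An orientation-preserving self-homeomorphism of $S^2$ fixing three distinct points is isotopic to the identity, and the pure mapping class group of the thrice-punctured sphere is trivial. By Nielsen realization, any finite group of orientation-preserving orbifold self-homeomorphisms of $\Orb$ is trivial, so the induced $\Z_2$-action on $\Orb$ is trivial (every element of $G\setminus A_n$ descends to a map isotopic to the identity on $\Orb$).

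The second step is to apply the Birman--Hilden correspondence to the branched cover $p:S_g\to\Orb$, whose deck group is precisely $A_n$: any orientation-preserving homeomorphism of $S_g$ that normalizes the $A_n$-action and descends to a map isotopic to the identity on $\Orb$ is itself isotopic to an element of $A_n$. Hence any $g\in G\setminus A_n$ coincides, as a mapping class, with an element of $A_n$, contradicting $g\notin A_n$. This handles both extensions uniformly.

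Finally, I would weave this into the framework of the cited theorems: Theorem \ref{thm:wls} characterizes $\Sigma_n$-extensions of an $A_n$-action in terms of liftability of a $\Z_2$-action on $\Orb$ under the alternating branched cover, and its necessary condition is precisely the existence of a nontrivial $\Z_2$-action on $\Orb$; Theorem \ref{thm:alternating_extensions} reduces $A_n\times\Z_2$-extensions to a compatible commuting involution whose descent must likewise act on $\Orb$. Both criteria are ruled out by the orbifold rigidity above. The main subtlety -- rather than a true obstacle -- is verifying that ``trivial up to isotopy on $\Orb$'' genuinely forces membership in $A_n$ at the level of mapping classes; this is exactly the content of the Birman--Hilden theorem for the cover $p$, and it is what allows the orbifold-theoretic obstruction to preclude the existence of the putative extension.
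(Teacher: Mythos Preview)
Your argument is correct and, at its core, coincides with the paper's: both rest on the observation that a sphere with three cone points of pairwise distinct orders admits no nontrivial orientation-preserving orbifold mapping class, so no involution $\overline{G}$ can act on $\Orb_H$, and hence (via Birman--Hilden/Broughton) no index-$2$ overgroup of $H$ exists. The paper packages this through its combinatorial machinery---it states the result as the special case of Corollary~\ref{cor:self_normalizing}, which in turn is deduced from Proposition~\ref{prop:admissible_permutation} (itself a consequence of Zieschang's Theorem~\ref{thm:zieschang} and Broughton's liftability criterion, Proposition~\ref{prop:main4}): the induced permutation $\Pi_{\overline{G}}$ must send each $\sigma_i$ to a $\Sigma_n$-conjugate, hence $\Pi_{\overline{G}}=\mathrm{id}$ when the orders are distinct, and on a thrice-punctured sphere this forces $\overline{G}$ to be trivial. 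You instead bypass the data-set formalism and argue directly that the pure mapping class group of the thrice-punctured sphere is trivial and then apply Birman--Hilden; this is more self-contained and makes the topological mechanism transparent, while the paper's route situates the result within its broader classification framework.

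One small quibble: your appeal to ``Nielsen realization'' in the first step is misplaced. You do not need to realize anything---the descended $\Z_2$-action on $\Orb$ is already by honest homeomorphisms, and all you use is that the resulting map $\Z_2\to\map(\Orb)$ lands in the trivial group. The real work, as you correctly identify, is the Birman--Hilden step that upgrades ``isotopic to identity on $\Orb$'' to ``isotopic to a deck transformation on $S_g$'', together with the fact that for $g\ge 2$ the map from a finite subgroup of $\homeo^+(S_g)$ to $\map(S_g)$ is injective.
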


\noindent Another application is the following result, which pertains to the extensions of free $A_n$-actions. 
\begin{cor*}
There exists a free $A_n$-action on $S_g$ if and only if there exists $k\in \mathbb{N}$ such that $g= 1+k\cdot\frac{n!}{2}$. Furthermore:
\begin{enumerate}[(i)]
\item A free $A_n$-action on $S_g$ extends to a free $\Sigma_n$-action on $S_g$ if and only if $k$ is even, and 
\item A free $A_n$-action on $S_g$ extends to a non-free $\Sigma_n$-action on $S_g$ if $k \geq 3$ and $k$ is odd.
\end{enumerate}
\end{cor*}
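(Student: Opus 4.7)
The plan is to use the Riemann--Hurwitz formula to establish the numerical constraint on $g$ and then construct the claimed actions by producing surjections from (orbifold) surface groups onto $A_n$ or $\Sigma_n$, using standard facts about surface-group quotients together with Ore's theorem.

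For the existence assertion, if $A_n$ acts freely on $S_g$ the quotient is a closed surface $S_h$, and Riemann--Hurwitz gives $2g - 2 = \tfrac{n!}{2}(2h-2)$, equivalently $g = 1 + k \cdot \tfrac{n!}{2}$ with $k = h - 1 \in \mathbb{N}$. Conversely, given such a $k$, set $h = k + 1 \geq 2$; since $A_n$ is two-generated and $\pi_1(S_h)$ surjects onto every two-generated finite group (via Ore's theorem, which allows the product of commutators in the surface-group presentation to be made trivial), one obtains a free $A_n$-action on the corresponding regular cover, which has genus $g$.

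For part (i), if the action extends to a free $\Sigma_n$-action with quotient $S_{h'}$, then $g - 1 = n!(h' - 1)$, forcing $k = 2(h' - 1)$ to be even. Conversely, when $k$ is even, take $h' = k/2 + 1 \geq 2$ and choose any surjection $\psi: \pi_1(S_{h'}) \twoheadrightarrow \Sigma_n$. The preimage under $\psi$ of the index-two subgroup $A_n \leq \Sigma_n$ defines an unramified double cover $S_h \to S_{h'}$ with $h = 2h' - 1 = k + 1$, so $\psi$ yields a free $\Sigma_n$-action on $S_g$ whose restriction to $A_n$ is a free $A_n$-action, which is the required extension.

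For part (ii), assume $k \geq 3$ is odd and set $h' = (k-1)/2 \geq 1$ and $c = 6$. A direct Riemann--Hurwitz calculation shows that a $\Sigma_n$-action whose quotient orbifold $\mathcal{O}$ has underlying surface $S_{h'}$ and $c$ cone points of order $2$ has total space of genus $g$. I would then construct a surjection $\phi: \pi_1^{orb}(\mathcal{O}) \twoheadrightarrow \Sigma_n$ sending each cone-point generator to a transposition (hence to an odd involution outside $A_n$), so that $\phi^{-1}(A_n)$ is torsion-free and the $A_n$-subaction on $S_g$ is free. Concretely: choose transpositions $t_1, \ldots, t_6$ whose product lies in $A_n$ (automatic, since the count is even), then use Ore's theorem to write $(t_1 \cdots t_6)^{-1}$ as a product of commutators $[a_1, b_1] \cdots [a_{h'}, b_{h'}]$ in $\Sigma_n$, with $\phi(a_1)$ taken (e.g.) to be an $n$-cycle to force generation of $\Sigma_n$.

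The principal obstacle is the simultaneous realisation of the orbifold long relation and surjectivity in part (ii): for large $n$, six transpositions alone do not suffice to generate $\Sigma_n$, so one has to exploit the extra handles through $\phi(a_i), \phi(b_i)$ and invoke Ore's theorem to solve the commutator equation inside $\Sigma_n$ while maintaining surjectivity. Once such a $\phi$ is produced, the rest of the argument reduces to routine Riemann--Hurwitz bookkeeping.
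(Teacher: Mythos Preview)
Your approach is correct and more direct than the paper's. The paper derives this corollary from its WLS-pair machinery (Proposition~\ref{prop:wls} and Theorem~\ref{thm:wls}): rather than building the $\Sigma_n$-action on $S_g$ explicitly, it identifies such an extension with a liftable involution $\overline{G}$ on the $A_n$-quotient $S_{g_0}$ (where $g_0 = k+1$). Part (i) then reduces to the existence of a \emph{free} involution on $S_{g_0}$, forcing $g_0$ odd, i.e.\ $k$ even; part (ii) reduces to an involution on $S_{g_0}$ with exactly two fixed points, whose quotient has genus $(k+1)/2 \geq 2$ whenever $k \geq 3$ is odd. You bypass this framework entirely and construct the surface-kernel epimorphism onto $\Sigma_n$ by hand, which is more elementary but obliges you to manage surjectivity and the long relation simultaneously rather than reading them off from the lifting formalism.

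One point in part (ii) needs tightening. With six cone points your quotient genus is $h' = (k-1)/2$, which equals $1$ when $k = 3$. With only one handle, your stated plan---invoke Ore to realise $(t_1 \cdots t_6)^{-1}$ as $[a_1,b_1]$ \emph{and} prescribe $a_1$ to be an $n$-cycle---does not work as written, since Ore's theorem gives no control over the commuting pair. The easy repair is to take all $t_i = (1~2)$, so their product is trivial; then $a_1 = (1~2~\cdots~n)$, $b_1 = 1$ satisfies the relation and $\langle (1~2),\, a_1 \rangle = \Sigma_n$. Alternatively, use only \emph{two} cone points as the paper implicitly does: this gives quotient genus $(k+1)/2 \geq 2$ for every odd $k \geq 3$, so there is always a spare handle on which to apply Ore after sending the first pair of handle generators to a generating pair of $\Sigma_n$.
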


In Section \ref{section:embedding}, we derive a few other applications of our main theorems. The following result provides an asymptotic bound on the order of an element in an alternating or symmetric subgroup of $\map(S_g)$. 
\begin{prop*}
Let $H\leq \mathrm{Mod}(S_g)$ such that $H\cong \Sigma_n \text{(or } A_n)$. Then, for sufficiently large $n$, any element in $H$ has order less than $g/k$, for some $k\in \mathbb{N}$.
\end{prop*}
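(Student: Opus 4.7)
The strategy is to combine two classical estimates: the Hurwitz $84(g-1)$ bound on the order of a finite group acting on $S_g$, and Landau's estimate on the maximum order of an element of $\Sigma_n$. Since $H \leq \map(S_g)$ is a finite subgroup with $|H| \geq n!/2$ (whether $H \cong A_n$ or $\Sigma_n$), the Nielsen realization theorem lifts $H$ to a group of orientation-preserving isometries of some hyperbolic structure on $S_g$, whereupon the Hurwitz bound yields
\[
\frac{n!}{2} \;\leq\; |H| \;\leq\; 84(g-1),
\]
so that $g \geq 1 + n!/168$.

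For the opposite estimate, any element $F \in H$, viewed inside $\Sigma_n$, has order at most Landau's function
\[
g(n) \;:=\; \max\{\mathrm{ord}(\sigma) : \sigma \in \Sigma_n\},
\]
which by Landau's theorem satisfies $\log g(n) \sim \sqrt{n \log n}$. In particular $g(n)$ grows sub-exponentially in $n$, while by Stirling $\log(n!) \sim n \log n$. Hence $n!/g(n) \to \infty$, and for any fixed $k \in \mathbb{N}$,
\[
\frac{g}{k\, g(n)} \;\geq\; \frac{1 + n!/168}{k\, g(n)} \;\longrightarrow\; \infty \quad \text{as } n \to \infty,
\]
so for all sufficiently large $n$ every element of $H$ has order at most $g(n) < g/k$, as claimed.

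The main subtlety is parsing the quantifiers in the statement: the intended reading is that for every fixed $k \in \mathbb{N}$ the uniform bound $\mathrm{ord}(F) < g/k$ on $F \in H$ holds once $n$ is sufficiently large. No sharp form of Landau's bound is required; even the crude estimate $g(n) \leq e^n$ combined with Stirling is enough to drive $n!/g(n)$ to infinity. The only non-trivial input is the Hurwitz bound, which applies because any finite subgroup of $\map(S_g)$ for $g \geq 2$ acts faithfully on $S_g$ via Nielsen--Kerckhoff, so there is no genuine obstacle beyond assembling these ingredients in the right order.
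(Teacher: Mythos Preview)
Your proof is correct and follows essentially the same approach as the paper: both combine the Hurwitz bound $|H|\le 84(g-1)$ with the fact that Landau's function $g(n)$ grows sub-exponentially (the paper uses the crude bound $g(n)<e^{n/e}$ in place of your $\log g(n)\sim\sqrt{n\log n}$) to conclude that $n!/g(n)\to\infty$, and hence $g(n)<g/k$ for large $n$. The only cosmetic difference is that the paper phrases the final step as a contradiction while you argue directly.
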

\noindent An \textit{irreducible periodic mapping class} is characterized by its corresponding quotient orbifold, which is a sphere with three cone points (see~\cite{gilman}). As an application of the proposition above and our main theorems, we have the following corollary. 
\begin{cor*}
Let $H\leq \mathrm{Mod}(S_g)$ such that $H\cong \Sigma_n$ or $A_n$. Then $H$ can not contain an irreducible mapping class or a hyperelliptic involution.
\end{cor*}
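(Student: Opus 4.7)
The plan is to establish both non-containment claims by contradiction, exploiting the fact that irreducible periodic classes and hyperelliptic involutions both correspond to genus-zero quotient orbifolds but with sharply different cone-point data: three cone points in the first case versus $2g+2$ order-two cone points in the second.

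The irreducible case follows directly from the preceding proposition. If $F \in H$ is a putative irreducible class of order $N$ with quotient $S^2(n_1,n_2,n_3)$, the Riemann-Hurwitz formula
\[
\frac{2g-2}{N} \;=\; 1 - \frac{1}{n_1} - \frac{1}{n_2} - \frac{1}{n_3}
\]
has strictly positive right-hand side, so $N > 2g-2$ and hence $N \geq 2g-1$. For $n$ sufficiently large, the preceding proposition yields $|F'| < g/k$ for every $F' \in H$ and some $k \in \mathbb{N}$; taking $k=1$ already gives $N < g$, which is incompatible with $N \geq 2g-1$ for $g \geq 2$.

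The hyperelliptic case is the main obstacle, since $|\iota| = 2$ makes the order bound vacuous and a structural argument is required. The key observation is that $\iota$ has $2g+2$ fixed points and $S_g/\langle\iota\rangle \cong S^2$; since $\iota$ is central in $C_H(\iota)$, the quotient $C_H(\iota)/\langle\iota\rangle$ acts faithfully on $S^2$ by orientation-preserving homeomorphisms. By the classical classification of finite subgroups of $\homeo(S^2)$, this forces $C_H(\iota)/\langle\iota\rangle$ to be cyclic, dihedral, or polyhedral, and hence of order at most $60$, so $|C_H(\iota)| \leq 120$. On the other hand, any involution in $\Sigma_n$ or $A_n$ has cycle type consisting of $k$ disjoint transpositions for some $1 \leq k \leq \lfloor n/2 \rfloor$, and the standard centralizer formula $|C_{\Sigma_n}(\iota)| = 2^k\, k!\,(n-2k)!$ (with $|C_{A_n}(\iota)|$ equal to half of this in the generic case) yields a lower bound that exceeds $120$ once $n$ is sufficiently large, producing the contradiction.

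The finitely many residual small-$n$ cases --- where either the proposition's order bound is not yet in force or the centralizer lower bound fails to exceed $120$ --- can be dispatched by direct inspection using the explicit weak-conjugacy classifications of $\Sigma_n$- and $A_n$-actions furnished by Propositions~\ref{prop:main1} and~\ref{prop:main2}.
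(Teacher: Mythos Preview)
Your treatment of the irreducible case matches the paper's: both invoke the order bound from the preceding proposition together with the Riemann--Hurwitz lower bound on the order of an irreducible periodic class.

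For the hyperelliptic case you take a genuinely different route. The paper argues by a direct fixed-point count: assuming $g_0=0$, it writes $2g+2$ (the number of Weierstrass points) via Riemann--Hurwitz in terms of the signature $(0;m_1,\dots,m_r)$, writes the same quantity via the centralizer formula of Lemma~\ref{lem:subaction}, and then shows by a case split on $r\in\{3,4,\ge 5\}$ that the two expressions cannot agree once $n\ge 8$. Your argument instead observes that $C_H(\iota)/\langle\iota\rangle$ acts faithfully and orientation-preservingly on $S_g/\langle\iota\rangle\cong S^2$ and hence must be cyclic, dihedral, or polyhedral; this is conceptually cleaner and avoids the case analysis. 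The trade-off is sharpness: extracting only the order bound $|C_H(\iota)|\le 120$ from the classification leaves $n=8$ uncovered (e.g.\ a product of three transpositions in $\Sigma_8$ has centralizer of order $96$, and $(1\,2)(3\,4)$ in $A_8$ has centralizer of order $96$), whereas the paper's inequality goes through from $n\ge 8$. You could close this gap without appealing to ad hoc inspection by using the isomorphism type rather than just the order---for instance, $C_{\Sigma_8}((1\,2)(3\,4)(5\,6))/\langle\iota\rangle\cong S_4\times\mathbb{Z}_2$, which is not on the list of finite rotation groups of $S^2$---but as written your argument reaches the asymptotic conclusion by a different and more geometric mechanism than the paper's, at the cost of a slightly narrower explicit range.
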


\noindent Finally, in Section \ref{section:table}, we have classified the alternating and symmetric subgroups of $\map(S_{10})$ and $\map(S_{11})$ up to \textit{weak conjugacy}.
\section{Preliminaries}
In this section, we introduce some basic concepts and results from the theory of group actions on surfaces that will be extensively used in this paper.
\subsection{Fuchsian groups} Let $\mathbb{H}$ be the upper-half plane equipped with the standard hyperbolic metric. A \textit{cocompact Fuchsian group}~\cite{SK1} $\Gamma$ is a discrete subgroup of $\mathrm{PSL}(2,\mathbb{R})$, the group of orientation-preserving isometries of $\mathbb{H}$, such that $\mathbb{H}/\Gamma$ is compact. A cocompact Fuchsian group $\Gamma$ admits a presentation given by:
\[\Gamma=\biggl\langle~ \alpha_1,\beta_1,..., \alpha_{g_{0}},\beta_{g_{0}},\xi_{1},...,\xi_{r}~\bigg|~\xi_{1}^{m_1},...,\xi_{r}^{m_r},\prod_{i = 1}^{r} \xi_{i}\prod_{i = 1}^{g_{0}} [\alpha_{i},\beta_{i}]~\biggr\rangle. \tag{1} \label{eqn:eqn1}
\]
The \textit{signature} of $\Gamma$ is defined to be the tuple, 
$$\sig(\Gamma):=(g_0; m_1, \dots, m_r).$$
The quotient space $\mathcal{O} = \mathbb{H}/\Gamma$ is called an \textit{hyperbolic orbifold of genus $g_0$} with \textit{orbifold signature} $\sig(\mathcal{O}) := \sig(\Gamma$). The orbifold $\mathcal{O}$ has $r$ distinguished points $x_1,\ldots,x_r$ called \textit{cone points}, of orders $m_1,\ldots m_r$, respectively, whose local neighborhoods are isometric to the quotient of an open ball in $\mathbb{H}$ by a finite group of rotations of $\mathbb{H}$. In particular, if $\Gamma$ has no torsion, then everything matches our usual notions of hyperbolic surfaces.

\subsection{Finite group actions on surfaces} By the Nielsen realization theorem~\cite{nielsen,kerckhoff}, a finite subgroup $H < \map(S_g)$ can be realized as a group of isometries under a hyperbolic metric $\rho$ on $S_g$. By Thurston's orbifold theory~\cite[Chapter 16]{Thurston}, the action of $H$ on $S_g$ induces a short exact sequence $$1 \longrightarrow \pi_1(S_g) \longrightarrow \pi _1^{orb}(\mathcal{O}_H)\overset{\phi_H} \longrightarrow H \longrightarrow 1,$$
where $\mathcal{O}_H$ is the orbifold $S_g/H(\approx \mathbb{H}/\pi _1^{orb}(\mathcal{O}_H))$, and we have the following version of the Riemann's existence theorem~\cite[Corollary 5.9.5]{Jones} due to Harvey~\cite{harvey}. 
\begin{theorem}
\label{thm:riemann}
A finite group $H$ acts on $S_g$ with a quotient orbifold $\Orb_H$ with $\sig(\Orb_H) = (g_0; m_1,\dots, m_r)$ if and only if the following conditions hold.
\begin{enumerate}[(i)]

    \item There exists a co-compact Fuchsian group $\Gamma$ with $\sig(\Gamma) = \sig(\Orb_H)$.
    \item There exists a surjective homomorphism
$$\phi_H : \Gamma \to H,$$ which is order-preserving on torsion elements.
    \item $\frac{2-2g}{|H|}= 2-2g_0-\sum_{i=1}^{r} (1-\frac{1}{m_i}).$
\end{enumerate}
\end{theorem}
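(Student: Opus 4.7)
The plan is to prove both directions via the correspondence between finite group actions on $S_g$ and short exact sequences of the form $1\to\pi_1(S_g)\to \Gamma\to H\to 1$, where $\Gamma$ is a cocompact Fuchsian group. The forward direction is essentially a matter of unpacking the orbifold quotient, while the reverse direction requires manufacturing the action from algebraic data.

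For the forward direction, suppose $H<\map(S_g)$ acts faithfully on $S_g$. By the Nielsen realization theorem \cite{nielsen,kerckhoff}, we can endow $S_g$ with a hyperbolic metric $\rho$ for which $H$ acts by isometries, so that $S_g=\mathbb{H}/\pi_1(S_g)$ and $\Orb_H=S_g/H$ inherits a hyperbolic orbifold structure. I would then identify $\Gamma:=\pi_1^{orb}(\Orb_H)$ with a cocompact Fuchsian group (it is the group of deck transformations of the universal orbifold cover $\mathbb{H}\to\Orb_H$), whose signature reads off the cone points of $\Orb_H$; this gives (i). Covering space theory for orbifolds yields a short exact sequence $1\to\pi_1(S_g)\to\Gamma\overset{\phi_H}{\to}H\to 1$, which supplies (ii); the order-preservation on torsion follows because each generator $\xi_i$ in the presentation (\ref{eqn:eqn1}) is a small loop about a cone point $x_i$, and its image in $H$ is precisely the rotation stabilizing any lift of $x_i$, which has order $m_i$. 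The equality (iii) is just the multiplicativity of the orbifold Euler characteristic under the degree-$|H|$ branched cover $S_g\to\Orb_H$, i.e.\ the Riemann--Hurwitz formula applied to $\chi^{orb}(\Orb_H)=2-2g_0-\sum(1-1/m_i)$.

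For the converse, given (i)--(iii), let $K:=\ker\phi_H\trianglelefteq\Gamma$. The central step is to show $K$ is torsion-free. Any torsion element of $\Gamma$ is conjugate into some $\langle\xi_i\rangle$ by the standard classification of torsion in cocompact Fuchsian groups, so an element of $K$ of order $d>1$ would arise as a conjugate of some $\xi_i^{j}$ with $1\le j<m_i$ and $d=m_i/\gcd(j,m_i)$; but the order-preserving condition on $\phi_H$ forces $\phi_H(\xi_i^{j})$ to have order $d>1$, contradicting membership in $K$. Thus $K$ is a torsion-free, finite-index, normal subgroup of a cocompact Fuchsian group, and hence $\mathbb{H}/K$ is a closed hyperbolic surface of some genus $g'$. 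Using condition (iii) in tandem with Riemann--Hurwitz for $\mathbb{H}/K\to\mathbb{H}/\Gamma$, one computes $g'=g$. Finally, the quotient group $H\cong\Gamma/K$ acts on $S_g\cong\mathbb{H}/K$ by isometries, with quotient orbifold $\mathbb{H}/\Gamma$, whose signature equals $\sig(\Orb_H)$ by (i).

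The main obstacle is the torsion-free claim for $K$ in the reverse direction; without the order-preserving hypothesis on torsion the homomorphism $\phi_H$ could fold a cone-point stabilizer to a smaller cyclic group, introducing torsion in $K$ and producing an action with a \textit{different} (smaller) quotient signature. The order-preservation precisely rules this out. Once $K$ is torsion-free, the remaining steps (genus computation via Riemann--Hurwitz and identification of the quotient orbifold) are formal. I would cite \cite{harvey} or \cite[Corollary 5.9.5]{Jones} for the complete argument and, in a self-contained write-up, include a brief remark that the signature $(g_0;m_1,\ldots,m_r)$ must satisfy $2g_0-2+\sum(1-1/m_i)>0$ so that $\Gamma$ is genuinely Fuchsian, which is automatic from (iii) once $g\ge 2$.
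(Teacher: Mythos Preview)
Your proposal is correct and follows the standard argument for this classical result. Note, however, that the paper does not supply its own proof of Theorem~\ref{thm:riemann}: it is quoted as a known version of Riemann's existence theorem, with attribution to Harvey~\cite{harvey} and the reference \cite[Corollary~5.9.5]{Jones}, and is used as background input for the paper's main results. Your write-up is essentially the proof one finds in those sources: Nielsen realization plus orbifold covering theory for the forward direction, and the torsion-free kernel argument (every torsion element of $\Gamma$ is conjugate into some $\langle\xi_i\rangle$, so order-preservation forces $\ker\phi_H$ to be torsion-free) together with Riemann--Hurwitz for the converse. There is nothing to compare against in the paper itself; your remark that the hyperbolicity of $\Gamma$ is automatic from (iii) when $g\ge 2$ is a useful sanity check that the paper leaves implicit.
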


\noindent The group $\Gamma$ in Theorem~\ref{thm:riemann} is also known as its associated \textit{orbifold fundamental group} denoted by $\pi _1^{orb}(\mathcal{O}_H)$.  The map $\phi_H$ is called the \textit{surface kernel epimorphism} associated with the action of $H$ on $S_g$. We note that the epimorphism $\phi_H$ in Theorem~\ref{thm:riemann} uniquely determines a faithful action of $H$ on $S_g$, which we denote by $\epsilon_H : H \rightarrow \homeo(S_g)$. We will also require the following result from the theory of Riemann surfaces~\cite[Lemma 11.5]{breuer}. 

\begin{lem}
\label{lem:subaction}
Let $H<\homeo(S_g)$ be of finite order with $\sig(\mathcal{O}_H)=(g_0;m_1,\ldots,m_l)$, and let $\G\in H$ be of order $m$. Then for $u\in \Z_m^{\times}$, we have
$$|\mathbb{F}_{\G}(u,m)|=|C_H(\G)|\cdot\sum\limits_{\substack{1\leq i\leq l, ~m\mid m_i \\ \G \sim_H \phi_H(\xi_i)^{m_iu/m}
}}\frac{1}{m_i},  $$
where $\mathbb{F}_{\G}(u,m)$ denote the set of fixed points of $\G$ with induced rotation angle $2\pi u^{-1}/m$ and $C_H(\G)$ denotes the centralizer of $\G$ in $H$.
\end{lem}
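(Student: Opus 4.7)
The plan is to count fixed points of $\G$ by tracking their images in the quotient orbifold $\Orb_H$. Every fixed point of a nontrivial element of $H$ projects to a cone point of $\Orb_H$, and conversely each preimage of a cone point $x_i$ is stabilized by a cyclic subgroup of order $m_i$. Therefore $|\mathbb{F}_{\G}(u,m)|$ decomposes as a sum, over those $i$ with $m \mid m_i$, of the number of preimages of $x_i$ that are fixed by $\G$ with induced rotation angle $2\pi u^{-1}/m$.

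For each such $i$, I would pick a preimage $\tilde{x}_i \in S_g$ of $x_i$ so that $\mathrm{Stab}_H(\tilde{x}_i) = \langle \phi_H(\xi_i) \rangle$, where $\phi_H(\xi_i)$ acts on the tangent space $T_{\tilde{x}_i} S_g$ as rotation by $2\pi/m_i$ relative to a chosen local uniformizer. The $H$-orbit of $\tilde{x}_i$ has size $|H|/m_i$, and a point $h \cdot \tilde{x}_i$ in this orbit is fixed by $\G$ precisely when $h^{-1} \G h \in \mathrm{Stab}_H(\tilde{x}_i)$, i.e.\ $h^{-1} \G h = \phi_H(\xi_i)^{k}$ for some $k$. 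In that case the rotation of $\G$ at $h \cdot \tilde{x}_i$ equals the rotation of $\phi_H(\xi_i)^{k}$ at $\tilde{x}_i$, namely $2\pi k / m_i$. Under the convention by which the exponent $u \in \Z_m^{\times}$ is assigned to rotation angle $2\pi u^{-1}/m$, the unique element of order $m$ in $\langle \phi_H(\xi_i) \rangle$ with this rotation is exactly $\phi_H(\xi_i)^{m_i u/m}$.

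Next I would enumerate the set $A_i := \{h \in H : h^{-1} \G h = \phi_H(\xi_i)^{m_i u/m}\}$. If $\G \not\sim_H \phi_H(\xi_i)^{m_i u/m}$, then $A_i$ is empty; otherwise, fixing any $h_0 \in A_i$, the condition $h^{-1} \G h = h_0^{-1} \G h_0$ is equivalent to $h h_0^{-1} \in C_H(\G)$, so $A_i = C_H(\G) \cdot h_0$ has cardinality $|C_H(\G)|$. Because $\phi_H(\xi_i)^{m_i u/m}$ commutes with every element of $\mathrm{Stab}_H(\tilde{x}_i)$, the set $A_i$ is stable under right multiplication by $\mathrm{Stab}_H(\tilde{x}_i)$, so it is a union of fibers of the $m_i$-to-$1$ map $h \mapsto h \cdot \tilde{x}_i$. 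Hence the number of preimages of $x_i$ fixed by $\G$ with rotation angle $2\pi u^{-1}/m$ equals $|C_H(\G)|/m_i$ when the conjugacy condition holds, and $0$ otherwise. Summing over $i$ yields the stated identity.

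The main subtlety, and the step I expect to require the most care, is pinning down the convention under which the exponent $u \in \Z_m^{\times}$ records rotation angle $2\pi u^{-1}/m$ rather than $2\pi u/m$. This depends on a consistent choice of local uniformizing parameter at each $\tilde{x}_i$ together with the identification of $\xi_i$ with the prescribed generator of $\mathrm{Stab}_H(\tilde{x}_i)$ coming from the short exact sequence in Thurston's orbifold correspondence; once these normalizations are fixed, the counting above is purely algebraic and the coefficient $|C_H(\G)|/m_i$ is a direct consequence of the orbit-stabilizer theorem.
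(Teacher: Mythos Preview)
The paper does not prove this lemma; it is quoted from Breuer's book \cite[Lemma 11.5]{breuer} and used as a black box. Your argument is correct and is essentially the standard proof one finds in that reference: fixed points of $\G$ lie over cone points, the stabilizer of a chosen lift $\tilde{x}_i$ is generated by $\phi_H(\xi_i)$ acting by rotation through $2\pi/m_i$, and the orbit--stabilizer count of the set $A_i=\{h\in H : h^{-1}\G h=\phi_H(\xi_i)^{m_iu/m}\}$ gives the factor $|C_H(\G)|/m_i$. The only delicate point you flag---matching the exponent $u$ to the rotation $2\pi u^{-1}/m$---is indeed a matter of fixing the convention that $\xi_i$ corresponds to the generator of the local isotropy acting by $e^{2\pi i/m_i}$, which is the normalization used both in Breuer and implicitly in the paper's Definition~\ref{defn:cyclic_data_set}; once that is set, your identification of the relevant power as $\phi_H(\xi_i)^{m_iu/m}$ is forced.
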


\subsection{Cyclic actions on surfaces} As we saw earlier, an $F\in \map(S_g)$ of order $n$ is represented by a hyperbolic isometry in $\homeo(S_g)$ called the \textit{Nielsen representative}, which we denote by $\F$. The orbifold $S_g/\langle \F \rangle$ has $l$ cone points $x_1,\ldots,x_l$ of orders $m_1,\ldots, m_l$. Each $x_i$ lifts under the branched cover $S_g \to S_g/\langle \F \rangle$ to an orbit of size $n/m_i$ on $S_g$ and the points in this orbit have an induced local rotation given by $2\pi c_i^{-1}/m_i$, where $\gcd(c_i,m_i)=1$. Thus, we can associate the following combinatorial data with $\langle \F \rangle$-action on $S_g$.
\begin{defn}\label{defn:cyclic_data_set}
By a \textit{cyclic data set}, we mean a tuple
$$D=(n, g_0; (c_1, m_1),\dots , (c_l, m_l)),$$ where $n\geq2$ and $g_0\geq0$ are integers such that:
\begin{enumerate}[(i)]
\item $m_i \geq 2$, $m_i \vert n $ and $\gcd(c_i,m_i)=1 \text{ for all } i, $
\item $\lcm(m_1, \dots , \hat{m_i}, \dots, m_l)$ = $\lcm(m_1, \dots, m_l) \text{ for all } i$, and if $g_0 = 0$, then $\lcm(m_1, \dots, m_l) = n$,
\item $\sum_{i=1}^{l} \frac{n}{m_i}c_i\equiv 0 \pmod{n}$.
\end{enumerate}
We call $n$ as the \textit{degree} of the data set, and the positive integer,
$g$ determined by
    $$\frac{2-2g}{n}=2-2g_{0}-\sum_{i=1}^{l} \left(1-\frac{1}{m_i}\right),$$
as the \textit{genus} of the data set. 
\end{defn}

\noindent If a pair $(c_i,m_i)$ in $D$ occurs more than once, we will use the symbol $(c_i,m_i)^{[\ell_i]}$ to denote that the pair $(c_i,m_i)$ occurs with multiplicity $\ell_i$ in $D$. As a result of Nielsen~\cite{JN1}, this data is known to characterize cyclic actions up to conjugacy. 
\begin{theorem}
The cyclic data sets of genus $g$ and degree $n$ correspond to conjugacy classes
of periodic maps of order $n$ on $S_g$.
\end{theorem}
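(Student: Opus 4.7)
The plan is to prove the bijection in two halves: associate a cyclic data set to each periodic map (invariant under conjugation), and conversely realize every data set by a periodic map (with uniqueness up to conjugation). Given a periodic $\mathcal{F}\in\homeo(S_g)$ of order $n$, I would realize it as a hyperbolic isometry via Nielsen-Kerckhoff; the quotient $\Orb_\mathcal{F}=S_g/\langle\mathcal{F}\rangle$ is then a hyperbolic orbifold of genus $g_0$ with cone points $x_1,\dots,x_l$ of orders $m_1,\dots,m_l$. The local rotation at a lift of $x_i$ has the form $2\pi c_i^{-1}/m_i$ with $\gcd(c_i,m_i)=1$, yielding the tuple. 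Condition \textup{(i)} is immediate from $m_i\mid n$; the Euler-characteristic calculation gives the genus relation; and condition \textup{(iii)} comes from applying the surface-kernel epimorphism $\phi_\mathcal{F}$ to the long relation $\prod\xi_i\prod[\alpha_j,\beta_j]=1$ in $\pi_1^{orb}(\Orb_\mathcal{F})$. Condition \textup{(ii)} encodes the surjectivity and order-preservation of $\phi_\mathcal{F}$: when $g_0=0$ the $\phi_\mathcal{F}(\xi_i)$ alone must generate $\Z_n$, and omitting any single $m_i$ from the lcm cannot drop the image below $n$. Since all this data is intrinsic to $\Orb_\mathcal{F}$, it is a conjugacy invariant of $\mathcal{F}$.

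For the remaining direction, given a cyclic data set $D$ I would invoke Theorem \ref{thm:riemann}. Condition \textup{(i)} together with the genus relation let me pick a cocompact Fuchsian group $\Gamma$ with $\sig(\Gamma)=(g_0;m_1,\dots,m_l)$ (such $\Gamma$ exists by a standard hyperbolic polygon construction since $g\ge 2$ forces negative Euler characteristic). Define $\phi:\Gamma\to\Z_n$ by sending $\xi_i$ to the element of order $m_i$ in $\Z_n$ whose induced fixed-point rotation is $2\pi c_i^{-1}/m_i$, and choose $\phi(\alpha_j),\phi(\beta_j)$ freely, adjusting them if $g_0>0$ to force surjectivity when the $\phi(\xi_i)$ do not already suffice. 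Condition \textup{(iii)} makes $\phi$ well-defined as a homomorphism, condition \textup{(i)} makes it order-preserving on torsion, and condition \textup{(ii)} secures surjectivity. Theorem \ref{thm:riemann} then produces a faithful $\Z_n$-action realizing $D$ on a surface of genus $g$. For uniqueness, if $\mathcal{F},\mathcal{F}'$ produce the same data, the identified rotation data allows me to choose an orientation-preserving orbifold homeomorphism $h_0:\Orb_\mathcal{F}\to\Orb_{\mathcal{F}'}$ matching cone points of equal $(c_i,m_i)$; then $h_0$ lifts to a homeomorphism $h\in\homeo(S_g)$ intertwining $\langle\mathcal{F}\rangle$ with $\langle\mathcal{F}'\rangle$, and the matching of all rotation numbers forces $h\mathcal{F}h^{-1}=\mathcal{F}'$.

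The principal obstacle is this last rigidification: an orbifold homeomorphism lifts only up to composition with a deck transformation, so a careless lift might conjugate $\mathcal{F}$ to $\mathcal{F}^u$ for some $u\in\Z_n^{\times}$ rather than to $\mathcal{F}'$. The rotation numbers $c_i^{-1}$ in the data set are precisely what control this ambiguity — replacing $\mathcal{F}$ by $\mathcal{F}^u$ would transform each $c_i$ to $uc_i\bmod m_i$, so the agreement of \emph{every} $(c_i,m_i)$ pins down $u=1$ and upgrades the cyclic-subgroup correspondence $\langle\mathcal{F}\rangle\leftrightarrow\langle\mathcal{F}'\rangle$ to the generator correspondence $\mathcal{F}\leftrightarrow\mathcal{F}'$. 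Formalizing this careful bookkeeping of rotation numbers is the delicate combinatorial content distinguishing the present statement from the weaker fact that the orbifold signature alone classifies cyclic subgroups up to conjugacy.
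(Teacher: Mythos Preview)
The paper does not actually prove this theorem; it is stated as a classical result of Nielsen with a citation to \cite{JN1} and no argument given. So there is no paper-side proof to compare against, and your sketch should be judged on its own merits.

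Your outline is a reasonable reconstruction of Nielsen's argument, but there is a genuine gap in the uniqueness step. You assert that agreement of all rotation pairs $(c_i,m_i)$ ``pins down $u=1$''. In fact it only determines $u$ modulo $\lcm(m_1,\dots,m_l)$. When $g_0>0$, condition (ii) of Definition~\ref{defn:cyclic_data_set} does \emph{not} require this lcm to equal $n$; in the extreme case of a free action there are no rotation pairs whatsoever, yet the theorem still asserts that any two free $\Z_n$-actions on $S_g$ are conjugate. Your argument as written establishes only that $h\mathcal{F}h^{-1}=(\mathcal{F}')^u$ for some unit $u$ congruent to $1$ modulo each $m_i$, which need not force $u=1$.

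The repair uses the hyperbolic generators, which your sketch treats as passive. When $g_0\ge 1$ one can precompose the surface-kernel map with an automorphism of $\Gamma$ induced by a self-homeomorphism of the base orbifold that rescales the images of the $\alpha_j,\beta_j$ in $\Z_n$ by the needed unit while fixing the elliptic images. Concretely, the symplectic group $\mathrm{Sp}(2g_0,\Z)$ acts transitively on primitive vectors in $H_1(S_{g_0};\Z_n)$, and this is what makes two surjections $\pi_1(S_{g_0})\to\Z_n$ equivalent under $\map(S_{g_0})$. This step, not the rotation-number bookkeeping, is what handles the residual ambiguity; your write-up should make this explicit rather than leave it hidden inside ``choose $\phi(\alpha_j),\phi(\beta_j)$ freely''.
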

\noindent By the Nielsen realization theorem \cite{nielsen,kerckhoff}, cyclic data sets of genus $g$ and degree $n$ are also in one-to-one correspondence with the conjugacy classes of order-$n$ periodic mapping classes in $\map(S_g)$. Therefore, we represent the conjugacy class of a periodic $F\in\map(S_g)$ by $D_{F}$. 
\subsection{Symmetric and alternating actions on surfaces} 
\label{subsec:sym_alt_actions}
We will denote symmetric and alternating group on the $n$ symbols $\{1,2,\ldots,n\}$ by $\Sigma_n$ and $A_n$, respectively.  Given $\sigma_1,\sigma_2 \in \Sigma_n$, we will fix the convention that the product $\sigma_1\sigma_2$ is defined as the composition $\sigma_1 \circ \sigma_2$ of the permutations when treated as functions. We denote the cycle type of each permutation $\sigma \in \Sigma_n$ by the tuple $(k_1, \ldots k_{\ell})$, where $k_i \leq k_{i+1}$ and each $k_i>1$ for $1\leq i \leq \ell$. It is well known that the conjugacy class for any $\sigma \in \Sigma_n$ is completely determined by its cycle type. The same holds true for most $ \sigma \in A_n$, except when $\sigma$ decomposes (canonically) into disjoint cycles of distinct odd lengths, where fixed points are treated as cycles of length $1$, in which case the conjugacy class of $\sigma$ further splits into two subclasses of equal size. We will fix the standard generators $\Sigma_n = \langle \sigma_s, \tau_s \rangle$ and $A_n = \langle \sigma_a, \tau_a\rangle$, where $\sigma_s= (1~2)$, $\tau_s=(1~2~\dots~n)$, $\sigma_a= (1~2~3)$, and 
$$\tau_a = 
 \begin{cases}
    (1~2~\dots~n), & \text{if } n \text{ is odd, and} \\
    (2~3~\dots~n), &  \text{if } n \text{ is even.}
  \end{cases}$$
Our main theorems use the following basic facts about $\Sigma_n$ and $A_n$.
\begin{theorem}
For $n\neq 6$, every automorphism of $\Sigma_n$ is inner. Furthermore, if $n\geq3$ and $n\neq6$, then every automorphism of $A_n$ is the restriction of an inner automorphism of $\Sigma_n$.
\label{lem:inner}
\end{theorem}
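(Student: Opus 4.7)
My plan is to establish the symmetric case first and then bootstrap to the alternating case. For $\Sigma_n$ with $n \neq 6$, the key observation is that transpositions form a distinguished conjugacy class of involutions, detectable by centralizer order alone. An involution of cycle type $2^k 1^{n-2k}$ has centralizer of order $2^k k! (n-2k)!$, and equating this with the transposition centralizer order $2(n-2)!$ reduces to $(n-2)! = 2^{k-1} k! (n-2k)!$. The case $k = 1$ is automatic, and a short case analysis yields the unique exceptional solution $(n, k) = (6, 3)$ (since $4! = 4 \cdot 6$). Hence for $n \neq 6$ every $\phi \in \mathrm{Aut}(\Sigma_n)$ permutes the set of transpositions.

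Given such a $\phi$, I would construct $\sigma \in \Sigma_n$ realizing $\phi$ as conjugation by $\sigma$. Write $\phi((1~2)) = (a_1~a_2)$, and note the following relations $\phi$ must preserve: two transpositions commute iff their supports are disjoint, and generate a copy of $\Sigma_3$ iff their supports meet in exactly one point. Combined with the order constraint on longer products (e.g.\ $(1~2)(1~3)(1~4)$ is a $4$-cycle, whereas a triangle configuration $(a_1~a_2)(a_1~a_3)(a_2~a_3)$ collapses to a transposition), this forces the images $\phi((1~i))$ for $i = 2, \dots, n$ to share one common symbol $a$ (a ``star'' configuration). Setting $\sigma(1) = a$ and letting $\sigma(i)$ be the other symbol in $\phi((1~i))$ yields a permutation, and verifying that $\phi$ agrees with conjugation by $\sigma$ on the generating set $\{(1~i) : 2 \leq i \leq n\}$ completes the case.

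For the alternating case, let $\phi \in \mathrm{Aut}(A_n)$ with $n \geq 3$, $n \neq 6$. The cases $n \in \{3, 4\}$ reduce to direct verification ($A_3 \cong \Z_3$, and $|\mathrm{Aut}(A_4)| = 24 = |\Sigma_4|$ is easily checked). For $n \geq 5$ with $n \neq 6$, an analogous centralizer count identifies the $3$-cycles as the unique conjugacy class of order-$3$ elements in $A_n$ of their particular size, so $\phi$ permutes the $3$-cycles. Writing $\phi((1~2~3)) = (b_1~b_2~b_3)$ and analyzing the images $\phi((1~2~i))$ for $i \geq 4$ (which together generate $A_n$) via the analogous incidence and order relations, I would construct a permutation $\sigma \in \Sigma_n$ such that $\phi$ agrees with the restriction of conjugation by $\sigma$ on this generating set, hence on all of $A_n$.

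The principal obstacle is the failure at $n = 6$: the cycle types $2$ and $2^3$ give conjugacy classes of the same size ($15$ each) in $\Sigma_6$, so transpositions are no longer distinguished by centralizer order and the above construction collapses -- this coincidence is precisely the origin of the outer automorphism of $\Sigma_6$ (and $A_6$). A secondary technical point is verifying that the candidate $\sigma$ is a well-defined bijection in each construction, which reduces to checking injectivity of $\phi$ on the chosen generating set.
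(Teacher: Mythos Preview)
The paper does not prove this theorem at all: it is stated there as a classical background fact about $\Sigma_n$ and $A_n$ (immediately after fixing the standard generators), with no argument given. So there is no proof in the paper to compare your proposal against.

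That said, your outline is the standard proof (essentially H\"older's argument), and it is correct in structure. A couple of small points you would need to tighten if you actually wrote it out. First, in the $\Sigma_n$ case the centralizer equation $2^{k-1}k!(n-2k)! = (n-2)!$ does have the solution $(n,k)=(6,3)$ as you note, but you should also explicitly rule out $k=2$ and $k \geq 4$ for all $n$; this is routine but should be stated. Second, in the $A_n$ case you assert that $3$-cycles are identified ``by conjugacy class size among order-$3$ elements,'' but you have not actually carried out that count, and for small $n$ it is a little delicate (one also needs that for $n\geq 5$ the $3$-cycles form a single $A_n$-class, which holds because their $\Sigma_n$-centralizer $\langle(a\,b\,c)\rangle \times \Sigma_{n-3}$ contains odd permutations). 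Finally, the construction of $\sigma$ from the images $\phi((1\,2\,i))$ in the alternating case is somewhat more involved than the transposition case, since a $3$-cycle carries an orientation; you would need to track this when showing the images share a common pair of symbols.

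None of these are genuine gaps---they are the expected details one fills in when writing the classical proof carefully. Since the paper simply cites the result, your proposal goes well beyond what the paper does.
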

\noindent We will also require the following classical result due to Ore \cite{Ore} in the proof of our main theorems.
\begin{theorem}
For $n\geq 5$, every element in $A_n$ is a commutator.
\label{thm:Ore}
\end{theorem}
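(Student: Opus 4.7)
The plan is to perform a case analysis on the cycle type of $\sigma\in A_n$, exploiting the fact that the set of commutators is closed under conjugation: since $\gamma[\alpha,\beta]\gamma^{-1}=[\gamma\alpha\gamma^{-1},\gamma\beta\gamma^{-1}]$, it suffices to produce a single commutator representative in each $A_n$-conjugacy class. Recall that a $\Sigma_n$-conjugacy class either coincides with an $A_n$-class or splits into two $A_n$-classes of equal size, the latter occurring precisely when $\sigma$ decomposes into disjoint cycles of distinct odd lengths (fixed points included). Accordingly, for each non-split cycle type I need exhibit only one construction, while for each split cycle type I must furnish a commutator in each of the two halves.

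Next I would set up a combining principle: if $\sigma_1,\sigma_2$ are supported on disjoint subsets of $\{1,\ldots,n\}$ and each $\sigma_i=[x_i,y_i]$ with the pairs $(x_i,y_i)$ also supported on disjoint subsets, then $\sigma_1\sigma_2=[x_1x_2,\,y_1y_2]$. This reduces the problem to exhibiting commutator representations for individual cycles or small combinations, where any parity obstruction in the individual $x_i,y_i$ can be corrected by absorbing unused transpositions. The remaining core task is to explicitly realize each admissible single-cycle type as $[x,y]$ for suitable $x,y\in A_n$: for an odd $k$-cycle $(a_1~a_2~\cdots~a_k)$ with $n\geq k+2$ one can write it as the commutator of a $(k+2)$-cycle and a well-chosen $3$-cycle on overlapping supports, and analogous explicit constructions handle pairs of even-length cycles (the smallest admissible configurations involving even-length cycles in $A_n$).

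The main obstacle is the split conjugacy classes: whenever $\sigma$ has cycle type consisting of distinct odd parts, I must produce commutators in \emph{both} $A_n$-halves, since $A_n$-conjugation cannot move between them. This typically requires two separate explicit constructions, or a verification that the natural construction, when pre- or post-composed with conjugation by a suitable transposition, lands in the complementary class. A subsidiary obstacle is the extremal case in which the support of $\sigma$ already exhausts $\{1,\ldots,n\}$ (for instance an $n$-cycle with $n$ odd), leaving no auxiliary symbols for the combining step; here one needs bespoke constructions within the support itself, which are available for every $n\geq 5$ but must be verified by direct computation. The hypothesis $n\geq 5$ enters precisely to provide enough room for these base-case commutators and to guarantee the small sporadic cases (such as $A_4$, where the result fails for double transpositions) are excluded.
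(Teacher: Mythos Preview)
The paper does not prove this statement at all: it is quoted as a classical result of Ore and invoked as a black box (in the proof of Proposition~\ref{prop:main1}, to guarantee the existence of $\rho_1,\rho_2\in A_n$ with a prescribed commutator). So there is no ``paper's own proof'' to compare against; your proposal is an attempt to supply what the paper deliberately outsources to the literature.

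As a strategy your outline is sound and is essentially the shape of the standard elementary arguments: reduce to one representative per $A_n$-conjugacy class, build general elements from disjoint pieces via the identity $[x_1x_2,y_1y_2]=[x_1,y_1][x_2,y_2]$ when supports are disjoint, and handle the base cases (single odd cycles, pairs of even cycles) by explicit formulae. You have also correctly flagged the two genuine subtleties---the split $A_n$-classes, where a construction must be given in \emph{each} half, and the full-support cases, where no spare symbols are available for parity corrections or for the ``$n\geq k+2$'' trick. What remains is precisely the work: the explicit base-case identities and the bookkeeping showing that the parity-fixing transpositions can always be absorbed without disturbing the disjointness needed for the combining step. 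None of this is deep, but a referee would want to see the formulae written out (or a pointer to where they appear), since the edge cases---$n$-cycles for odd $n$, and products of distinct odd cycles exhausting $\{1,\ldots,n\}$---do require individual verification.

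One minor inaccuracy: your parenthetical that the result ``fails for double transpositions'' in $A_4$ is not quite right; in $A_4$ every element, including the double transpositions, is a commutator. The hypothesis $n\geq 5$ in Ore's statement is there because that is where his argument (and the applications) live, not because $n=4$ is a genuine counterexample.
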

\noindent We will require the following basic fact from group theory.
\begin{lem} Let $H$ be a group and $c_1, c_2, \dots, c_r \in H$. For each permutation $\pi$ of $\{1,2,\dots,r\}$ there are elements $h_1, h_2,\dots, h_r \in H$ such that $\prod_{i = 1}^{r} h_ic_{\pi(i)}h_{i}^{-1}$ is $H$-conjugate to $\prod_{i = 1}^{r} c_i$.
\label{lem:perm}
\end{lem}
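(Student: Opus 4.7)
The plan is to reduce to the case of an adjacent transposition (which generates the symmetric group), handle that case by an explicit choice of conjugators, and then induct.

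For the base case $\pi = (j,j+1)$ an adjacent transposition, I would set $h_i = e$ for $i \neq j$ and $h_j = c_j$. A one-line computation then gives
\[
\prod_{i=1}^{r} h_i c_{\pi(i)} h_i^{-1} \;=\; c_1 \cdots c_{j-1}\,(c_j c_{j+1} c_j^{-1})\, c_j \, c_{j+2} \cdots c_r \;=\; \prod_{i=1}^{r} c_i,
\]
so in fact the two products are equal (not merely conjugate) in this case.

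For arbitrary $\pi$, I would write $\pi = t_1 t_2 \cdots t_k$ as a product of adjacent transpositions and induct on $k$. Applying the inductive hypothesis to $\pi' = t_1\cdots t_{k-1}$ yields elements $h'_1,\ldots,h'_r$ such that, setting $d_i := h'_i c_{\pi'(i)}(h'_i)^{-1}$, the product $\prod d_i$ is $H$-conjugate to $\prod c_i$. Since $\pi = \pi' t_k$, we have $c_{\pi(i)} = c_{\pi'(t_k(i))} = (h'_{t_k(i)})^{-1} d_{t_k(i)} h'_{t_k(i)}$. Applying the adjacent transposition base case to the sequence $(d_1,\ldots,d_r)$ and the transposition $t_k$ produces elements $k_1,\ldots,k_r$ with $\prod k_i d_{t_k(i)} k_i^{-1} = \prod d_i$. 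Setting $h_i := k_i h'_{t_k(i)}$, each factor becomes $h_i c_{\pi(i)} h_i^{-1} = k_i d_{t_k(i)} k_i^{-1}$, and multiplying gives $\prod h_i c_{\pi(i)} h_i^{-1} = \prod d_i$, which is $H$-conjugate to $\prod c_i$.

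The main technical point is the bookkeeping in the inductive step, namely recognising that $c_{\pi(i)}$ and $d_{t_k(i)}$ differ only by conjugation by $h'_{t_k(i)}$, which allows the two conjugation data (from the induction and from the base case) to be composed into a single $h_i$; after that, the proof is purely mechanical.
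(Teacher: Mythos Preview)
Your proof is correct. The paper itself does not supply a proof of this lemma; it simply records it as a ``basic fact from group theory'' and moves on, so there is no paper argument to compare against. Your reduction to adjacent transpositions together with induction on the length of a transposition word is the standard route, and the bookkeeping in the inductive step (absorbing the conjugator $h'_{t_k(i)}$ into $h_i$) is carried out correctly. In fact your argument yields the slightly stronger conclusion that the two products can be made \emph{equal}, not merely conjugate.
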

\noindent Let $\Gamma$ be a co-compact Fuchsian group having the fixed presentations as in Equation \ref{eqn:eqn1}. Following the notation in Equation \ref{eqn:eqn1}, we will state a result of Zieschang \cite[Theorem 5.8.2]{zieschang}, which paraphrases to the following since we only deal with orientation preserving map (see also \cite{broughton_normalizer}).
\begin{theorem}
\label{thm:zieschang}
Let $\psi:\Gamma \to \Gamma$ be an automorphism. Then $\psi(\xi_i)$ is conjugate to $\xi_{\pi(i)}$ for some permutation $\pi$ on $r$ letters.

\end{theorem}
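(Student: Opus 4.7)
The plan is to prove the statement in two stages. First, I would show that $\psi$ induces a permutation $\pi$ on $\{1,\ldots,r\}$ sending the conjugacy class of $\langle\xi_i\rangle$ to that of $\langle\xi_{\pi(i)}\rangle$. Second, I would refine this to conclude that $\psi(\xi_i)$ is conjugate to $\xi_{\pi(i)}$ itself, rather than merely to some power $\xi_{\pi(i)}^{k}$.

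For the first stage, I would invoke the standard structure of torsion in a cocompact Fuchsian group $\Gamma$ with signature $(g_0;m_1,\ldots,m_r)$. Because every elliptic element of $\mathrm{PSL}(2,\mathbb{R})$ fixes a unique point in $\mathbb{H}$, and the $\Gamma$-stabilizer of a lift of the $i$-th cone point of $\mathbb{H}/\Gamma$ is cyclic of order $m_i$ and conjugate to $\langle\xi_i\rangle$, it follows that every torsion element of $\Gamma$ is conjugate to a nontrivial power of exactly one $\xi_i$, and the maximal finite cyclic subgroups of $\Gamma$ are precisely the $\Gamma$-conjugates of the $\langle\xi_i\rangle$. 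Since automorphisms preserve orders of elements and conjugacy classes of subgroups, $\psi$ permutes the conjugacy classes of maximal finite cyclic subgroups, inducing a permutation $\pi$ of $\{1,\ldots,r\}$ such that $\psi(\langle\xi_i\rangle)$ is $\Gamma$-conjugate to $\langle\xi_{\pi(i)}\rangle$; consequently $\psi(\xi_i)$ is conjugate to $\xi_{\pi(i)}^{k_i}$ for some $k_i$ coprime to $m_{\pi(i)}$.

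For the second stage, I would realize $\Gamma$ geometrically as $\pi_1^{orb}(\Orb)$ where $\Orb=\mathbb{H}/\Gamma$, and invoke the orbifold analogue of the Dehn--Nielsen--Baer theorem to lift $\psi$ to an orientation-preserving self-homeomorphism $f$ of $\Orb$. Each generator $\xi_i$ corresponds to a small positively-oriented loop encircling the $i$-th cone point; since $f$ is orientation-preserving and permutes cone points according to $\pi$, it must send positively-oriented small loops to positively-oriented small loops, which forces $k_i\equiv 1\pmod{m_{\pi(i)}}$. The main obstacle I anticipate is justifying this orbifold Dehn--Nielsen--Baer step rigorously; an alternative is a purely algebraic route following Zieschang--Vogt--Coldewey, in which one reduces an arbitrary automorphism to the identity modulo cyclic permutations via Nielsen transformations, tracking orientation through the abelianization (or an Euler-class-type invariant) to rule out the exponent $-1$. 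In either approach the orientation-preserving hypothesis is essential: without it one could only conclude that $\psi(\xi_i)$ is conjugate to $\xi_{\pi(i)}^{\pm 1}$.
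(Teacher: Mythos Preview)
The paper does not give its own proof of this theorem; it simply quotes it as Theorem~5.8.2 of Zieschang's book, with the remark that the stated form (conjugacy to $\xi_{\pi(i)}$ rather than $\xi_{\pi(i)}^{\pm 1}$) holds because only orientation-preserving maps are being considered. So there is no in-paper argument to compare against.

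That said, your outline is a correct sketch of the standard proof. Your first stage is exactly the classical description of torsion in a cocompact Fuchsian group, and your second alternative for the refinement---the algebraic Nielsen-transformation route of Zieschang--Vogt--Coldewey---is precisely the source the paper is invoking. Your identification of the orientation-preserving hypothesis as what rules out the exponent $-1$ also matches the paper's own parenthetical comment. The geometric alternative via the orbifold Dehn--Nielsen--Baer theorem is equally valid and is in fact mentioned elsewhere in the paper (Section~4), so either route would be acceptable here.
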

\noindent we will use the above result in the proof of our main theorems.
\begin{defn}
Let $H< \map(S_g)$ be a finite subgroup. Then periodic mapping classes $F, G \in \map(S_g)$ are said to \textit{weakly generate} $H$ (in symbols $\langle F, G\rangle_w=H$) if there exists conjugates $F', G'$ of $F, G$ respectively in $\map(S_g)$ such that $\langle F', G'\rangle=H$.
\end{defn}
\noindent We now define an equivalence of actions that will play a crucial role in the theory we will develop.
\begin{defn}\label{defn:wk_conjugacy}
Consider $H_1,H_2 < \homeo(S_g)$ such that $H_1 \cong H_2 \cong A_n$ (or $\Sigma_n$). Then we say that the actions of $H_1$ and $H_2$ on $S_g$ are \textit{weakly conjugate} if there exists an isomorphism $\psi: \pi_1^{orb}(\Orb_{H_1}) \to\pi_1^{orb}(\Orb_{H_2})$ and an isomorphism $\chi : H_1 \to H_2$ such that $(\chi\circ\phi_{H_1})(g)$ is conjugate to $(\phi_{H_2}\circ\psi)(g)$ in $H_2$, whenever $g\in\pi_1^{orb}(\Orb_{H_1})$ is of finite order.
\end{defn}
\noindent In other words, the action of $H_1$ and $H_2$ on $S_g$ are weakly conjugate if the following diagram commutes up to conjugacy at the level of torsion elements.
\begin{center}\label{fig:wkconjugacy}
\begin{tikzcd}[sep=12mm,
arrow style=tikz,
arrows=semithick,
diagrams={>={Straight Barb}}
                ]
\pi_1^{orb}(\Orb_{H_1}) \dar["\phi_{H_1}" '] \rar["\psi",""name=U]
        & \pi_1^{orb}(\Orb_{H_2}) \dar["\phi_{H_2}"]      \\
H_1\rar["\chi"  ,""name=D] & H_2
                     \ar[to path={(U) node[pos=.5,C] (D)}]{}
    \end{tikzcd}
\captionof{figure}{The weak conjugacy of the $H_i$-actions on $S_g$.}
\end{center}
The notion of weak conjugacy defines an equivalence relation on alternating (or symmetric) actions on $S_g$, and the equivalence classes thus obtained will be called \textit{weak conjugacy classes.} We represent the weak conjugacy class of an $H$-action on $S_g$ by $[\phi_H]$. We conclude this section with the following remark.
\begin{rem}
\label{rem:weak_conj}
By virtue of the Nielsen realization theorem, the notion of a weak conjugacy class of an $H$-action on $S_g$, where $H \cong A_n$ (or $\Sigma_n$) induces an analogous notion of weak conjugacy class of a subgroup of $\map(S_g)$ that is isomorphic to $A_n$ (or $\Sigma_n$).
\end{rem}

\section{Main Theorems}
\label{sec:main}
In this section, we will derive a combinatorial characterization of the weak conjugacy classes of $A_n$ and $\Sigma_n$-actions on $S_g$. 
\subsection{Alternating actions} We begin with the following technical definition.
\begin{defn}
\label{defn:alt_data_set}
An \textit{alternating data set $\D_a$ of degree $n$ and genus $g\geq 2$} is an ordered tuple $$(n, g_0; [\sigma_1, m_1; k_{11},k_{21},\dots , k_{l_11}], \dots, [\sigma_r, m_r; k_{1r},k_{2r},\dots , k_{l_rr}]),$$ where $n,g_0$ are integers with $n\geq 5$, $g_0 \geq 0$, and the $\sigma_i \in A_n$, non-trivial, satisfying the following conditions:
   \begin{enumerate}[(i)]
   \item  $\displaystyle  \frac{2-2g}{n\, !/2}=  2-2g_{0}-\sum_{i=1}^{r} \left(1-\frac{1}{m_i}\right).$
     \item $m_i$ is the order of $\sigma_i$ and $(k_{1i},\ldots,k_{l_ii})$ is the cycle type of $\sigma_i$.
\item For $g_0=0$, we have:
\begin{enumerate}[(a)]
    \item $\langle \sigma_1,\ldots,\sigma_r \rangle = A_n$ and 
    \item $\prod_{i = 1}^{r} \sigma_{i}= 1$.
\end{enumerate}
\item For $g_0=1$, there exist $\sigma_{r+1}, \sigma_{r+2} \in A_n$ such that:
\begin{enumerate}[(a)]
    \item $\langle \sigma_1,\ldots,\sigma_r, \sigma_{r+1}, \sigma_{r+2} \rangle = A_n$ and
    \item $\prod_{i = 1}^{r} \sigma_{i}= [\sigma_{r+2}, \sigma_{r+1}]$. 
\end{enumerate}
        \end{enumerate}

\end{defn}
\noindent If a tuple $[\sigma_i, m_i; k_{1i},k_{2i},\dots , k_{l_ii}]$ in $\D_a$ occurs more than once, then we will use the symbol $[\sigma_i, m_i; k_{1i},k_{2i},\dots , k_{l_ii}]^{[\ell_i]}$ to denote that it occurs with multiplicity in $\ell_i$ in $\D_a$. We will now define an equivalence of alternating data sets. 
\begin{defn}
\label{defn:eq_data_sets}
Two alternating data sets $$\D_a=\biggl(n, g_0; [\sigma_i, m_i; k_{1i},\dots , k_{l_ii}]_{i=1}^{r}\biggr) ~~\text{and} ~~\D_a'=\biggl(n, g_0; [\sigma_i', m_i'; k_{1i}',\dots , k_{l_i'i}']_{i=1}^{r}\biggr)$$ are said to be \textit{equivalent} if there exists a permutation $\pi$ on $r$ letters satisfying the following conditions.
\begin{enumerate}[(i)]
    \item For $1 \leq i \leq r$, we have $l_i=l_{\pi(i)}'$ and $(k_{1i},\dots , k_{l_ii})= (k'_{1\pi(i)},k'_{2\pi(i)},\dots , k'_{l_i\pi(i)})$. 
    \item If for some $1\leq i \leq r$,  $k_{1i},\dots , k_{l_ii}$ are distinct odd integers and $\Sigma_{j=1}^{l_i}k_{ji}\geq n-1$, then exactly one of the following conditions hold.
    \begin{enumerate}[(a)]
    \item $\sigma_i$ is conjugate to $\sigma_{\pi(i)}'$ in $A_n$ for all such $i$.
    \item $\sigma_i$ is not conjugate to $\sigma_{\pi(i)}'$ in $A_n$ for all such $i$.
    \end{enumerate}
\end{enumerate}
\end{defn}

\noindent We represent the equivalence class of an alternating data set $\D_a$ by $[\D_a]$.
\begin{prop}
\label{prop:main1}
 Equivalence classes of alternating data sets of genus $g$ and degree $n$ correspond to weak conjugacy classes of $A_n$-actions on $S_g$.
\end{prop}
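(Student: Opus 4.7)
The plan is to construct a bijection in both directions, using Theorem \ref{thm:riemann} to pass between group actions and surface kernel epimorphisms. Given an $A_n$-action $H < \homeo(S_g)$ with associated epimorphism $\phi_H : \Gamma \to A_n$, where $\Gamma = \pi_1^{orb}(\Orb_H)$ has signature $(g_0; m_1,\ldots,m_r)$, I would define the associated data set by $\sigma_i := \phi_H(\xi_i)$. Condition (i) of Definition \ref{defn:alt_data_set} is Riemann--Hurwitz, (ii) follows since $\phi_H$ is order-preserving on torsion, and (iii)/(iv) follow from surjectivity of $\phi_H$ together with the defining relation of $\Gamma$ (with $\sigma_{r+1} := \phi_H(\alpha_1)$ and $\sigma_{r+2} := \phi_H(\beta_1)$ when $g_0 = 1$).

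To verify that the resulting data set is well-defined up to equivalence, I would suppose that the $H_i$-actions for $i = 1, 2$ are weakly conjugate via $(\psi, \chi)$. Zieschang's theorem (Theorem \ref{thm:zieschang}) applied to $\psi$ yields a permutation $\pi$ such that $\psi(\xi_i)$ is conjugate to $\xi_{\pi(i)}'$ in $\pi_1^{orb}(\Orb_{H_2})$, so by the weak conjugacy diagram $\chi(\sigma_i)$ and $\sigma_{\pi(i)}'$ are $A_n$-conjugate. For $n \neq 6$, Theorem \ref{lem:inner} represents $\chi$ as conjugation by some fixed $\alpha \in \Sigma_n$, so $\sigma_i$ and $\sigma_{\pi(i)}'$ are $\Sigma_n$-conjugate, giving Definition \ref{defn:eq_data_sets}(i). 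Condition (ii) then follows from the classical fact that an $A_n$-conjugacy class properly refines its $\Sigma_n$-class precisely when the cycle type consists of distinct odd parts summing to at least $n - 1$: the parity of the single element $\alpha$ uniformly decides, across all such indices, whether $\sigma_i$ and $\sigma_{\pi(i)}'$ remain $A_n$-conjugate.

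Conversely, starting from a data set $\D_a$, I would invoke Theorem \ref{thm:riemann} after constructing a cocompact Fuchsian group $\Gamma$ with signature $(g_0; m_1, \ldots, m_r)$ (guaranteed by Definition \ref{defn:alt_data_set}(i)) and defining $\phi : \Gamma \to A_n$ on torsion generators by $\xi_i \mapsto \sigma_i$ and, when $g_0 = 1$, on hyperbolic generators by $\alpha_1 \mapsto \sigma_{r+1}$, $\beta_1 \mapsto \sigma_{r+2}$. The defining relation holds by (iii)(b) or (iv)(b), surjectivity follows from (iii)(a) or (iv)(a), and order-preservation on torsion follows from (ii). For equivalent data sets $\D_a \sim \D_a'$ with permutation $\pi$, I would pick $\alpha \in \Sigma_n$ of the parity dictated by (ii) so that $\alpha \sigma_i \alpha^{-1}$ is $A_n$-conjugate to $\sigma_{\pi(i)}'$ for every $i$, take $\chi$ to be conjugation by $\alpha$, and use Lemma \ref{lem:perm} to build an isomorphism $\psi$ permuting the torsion generators according to $\pi$, with the residual conjugations absorbed into the hyperbolic generators via Ore's theorem (Theorem \ref{thm:Ore}) when $g_0 \geq 1$. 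This produces the required witnesses to weak conjugacy.

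The main difficulty lies in Definition \ref{defn:eq_data_sets}(ii): one must verify that a single $\alpha \in \Sigma_n$, and in particular a single parity, suffices simultaneously for every index whose cycle type forces a split $A_n$-conjugacy class, and that this parity is a genuine invariant of the weak conjugacy class rather than an artifact of the chosen witnesses. The exceptional case $n = 6$ will require separate treatment, since Theorem \ref{lem:inner} fails there and $\chi$ may arise from an outer automorphism of $A_6$ not induced by conjugation in $\Sigma_6$.
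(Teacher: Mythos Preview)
Your approach is essentially the paper's: build $\phi$ from a data set via Theorem~\ref{thm:riemann}, use Lemma~\ref{lem:perm} to construct the Fuchsian-group isomorphism $\psi$ witnessing weak conjugacy of equivalent data sets, and use Zieschang's theorem in the reverse direction. Your explicit invocation of Theorem~\ref{lem:inner} to realize $\chi$ as conjugation by a single fixed $\alpha \in \Sigma_n$ is in fact cleaner than the paper's somewhat opaque ``by uniqueness'' step in its injectivity argument, and your flag on $n=6$ is correct---neither you nor the paper fully resolves that exceptional case.

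There is, however, a real gap in your construction of $\phi$ from $\D_a$ when $g_0 \geq 2$. You write that ``the defining relation holds by (iii)(b) or (iv)(b), surjectivity follows from (iii)(a) or (iv)(a),'' but Definition~\ref{defn:alt_data_set} imposes \emph{no} condition on the $\sigma_i$ when $g_0 \geq 2$: neither $\prod_i \sigma_i = 1$ nor $\langle \sigma_1,\ldots,\sigma_r\rangle = A_n$ is assumed there. You must therefore manufacture both surjectivity and the long relation by choosing the images of the hyperbolic generators. The paper does this by setting $\phi(\alpha_1,\beta_1)=(\sigma_a,\tau_a)$ to force surjectivity, and then invoking Ore's theorem (Theorem~\ref{thm:Ore}) to find $\rho_1,\rho_2 \in A_n$ with $[\rho_1,\rho_2]$ equal to the inverse of $\prod_i \sigma_i \cdot [\sigma_a,\tau_a]$, setting $\phi(\alpha_2,\beta_2)=(\rho_1,\rho_2)$ and all further hyperbolic generators to $1$. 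This is where Ore's theorem is actually needed---in constructing $\phi$, not in constructing $\psi$. Your reference to Ore in the $\psi$-construction is misplaced: once Lemma~\ref{lem:perm} supplies a single conjugator $h$ with $h\bigl(\prod_i \xi'_i\bigr)h^{-1}=\prod_i h_i\xi'_{\pi(i)}h_i^{-1}$, one simply sets $\psi(\alpha_j)=h\alpha'_jh^{-1}$ and $\psi(\beta_j)=h\beta'_jh^{-1}$, and the long relation in $\Gamma'$ transports directly without any commutator trick.
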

\begin{proof} We establish this correspondence by showing that there exists a natural map bijective map:
\begin{center}

$\left\{\parbox{57mm}{\raggedright Equivalence classes of alternating data sets of genus $g$ and degree $n$}\right\}\xrightarrow{\makebox[1.5cm]{$\Theta_a$}}
\left\{\parbox{40mm}{\raggedright weak conjugacy classes of $A_n$-actions on $S_g$} \right\}.$

\end{center}

\noindent Given an alternating data set $\D_a=\biggl(n, g_0; [\sigma_i, m_i; k_{1i},\dots , k_{l_ii}]_{i=1}^{r}\biggr)$ of genus $g$ and degree $n$, we first fix a co-compact Fuchsian group $\Gamma$ with sig$(\Gamma)=(g_0;m_1,m_2,\dots,m_r)$ having the fixed presentation as in (\ref{eqn:eqn1}). We want to construct a $H$-action on $S_g$, where $H \cong A_n$, such that $\sig(\pi_1^{orb}(\Orb_H))=(g_0; m_1, \dots, m_r).$ Thus, by Theorem \ref{thm:riemann}, it suffices to show that there exists a surface kernel epimorphism $\phi:\Gamma\to A_n$, where $\Gamma$ is taken to have the same presentation as in \ref{eqn:eqn1}. Now we define $\phi$ as follows: 
$$\begin{array}{rcll}
\phi(\xi_i) & := & \sigma_i, & \text{for } g_0 \geq 0 \text{ and } 1\leq i \leq r,\\
\phi(\alpha_1 ,\beta_1) & := & (\sigma_{r+1}, \sigma_{r+2}) &  \text{for }  g_0 = 1,\\
\phi(\alpha_1,\beta_1) & := & (\sigma_a, \tau_a), & \text{for }  g_0 \geq 2,\\
\phi(\alpha_2,\beta_2) & := & (\rho_1, \rho_2) , & \text{for } g_0 \geq 2, \text{ and}\\
\phi(\alpha_i)=\phi(\beta_i) & := & 1 & \text{for }  g_0 \geq 2 \text{ and } 3 \leq i \leq g_0,
\end{array}$$
such that
     \[
     \prod_{i = 1}^{r} \sigma_{i}[\phi(\alpha_1),\phi(\beta_1)][\rho_1, \rho_2]= 1, \tag{$\ast$}
      \]
We note that the existence of $\rho_1, \rho_2 \in A_n$ in ($\ast$) is guaranteed by Theorem \ref{thm:Ore}. For $1 \leq i \leq r$, we have
    $$\phi(\xi_i^{m_i})= \phi(\xi_i)^{m_i}=\sigma_i^{m_i}=1,$$
and when $g_0=0$, it follows from condition (iii)(b) (of Definition~\ref{defn:alt_data_set}) on $\D_a$ that $$\phi(\prod_{i = 1}^{r} \xi_{i})= \prod_{i = 1}^{r} \phi(\xi_{i})= \prod_{i = 1}^{r} \sigma_{i}=1.$$
Furthermore, when $g_0=1$, from condition (iv)(b) on $\D_a$, we get
$$\phi\left(\prod_{i = 1}^{r} \xi_{i}\cdot [\alpha_1,\beta_1]\right)= \prod_{i = 1}^{r} \phi(\xi_{i})\cdot [\phi(\alpha_1),\phi(\beta_1)] = \prod_{i = 1}^{r} \sigma_{i}\cdot[\sigma_{r+1},\sigma_{r+2}]=1,$$
and when $g_0\geq2$, it follows from ($\ast$) that
$$\phi\left(\prod_{i = 1}^{r} \xi_{i}\prod_{j = 1}^{g_{0}} [\alpha_{i},\beta_{i}]\right) = \prod_{i = 1}^{r} \phi(\xi_{i})\prod_{j = 1}^{g_{0}} \phi([\alpha_{i},\beta_{i}]) = \prod_{i = 1}^{r} \sigma_{i}[\phi(\alpha_1),\phi(\beta_1)][\rho_1, \rho_2]=1.$$

\noindent Therefore the map $\phi$ extends to a homomorphism for all $g \geq 0$. Moreover, by definition, it is apparent that $\phi$ preserves the order of torsion elements. For $g_0 \in \{0,1\}$, the surjectivity of $\phi$ follows from conditions (iii)(a) and (iv)(a) on $\D_a$, and for the case $g_0\geq 2$, since $\sigma_a, \tau_a \in \phi(\Gamma)$, it follows that $\phi$ is surjective. Finally, by condition (i) on $\D_a$, we have 
$$\frac{2-2g}{n!/2}=2-2g_0-\sum_{i=1}^{r} (1-\frac{1}{m_i}),$$
from which it follows that $\phi$ is the desired surface kernel epimorphism. 

Now, we define $\Theta_a([\D_a]): = [\phi]$ and show that $\Theta_a$ is well-defined.  Consider two equivalent data sets $$\D_a=\biggl(n, g_0; [\sigma_i, m_i; k_{1i},\dots , k_{l_ii}]_{i=1}^{r}\biggr) ~~\text{and} ~~\D_a'=\biggl(n, g_0; [\sigma_i', m_i'; k_{1i}',k_{2i}',\dots , k_{l_i'i}']_{i=1}^{r}\biggr),$$
where $\Theta_a([\D_a])=[\phi]$ and $\Theta_a([\D_a'])=\phi'$. Let $\phi:\Gamma \to A_n$ and $\phi':\Gamma' \to A_n$ are surface-kernel epimorphisms, where $\Gamma$ has the fixed presentation as in (\ref{eqn:eqn1}) and $\Gamma'$ has the presentation
\[\Gamma'=\biggl\langle~ \alpha_1',\beta_1',..., \alpha_{g_{0}}',\beta_{g_{0}}',\xi_{1}',...,\xi_{r}'~\bigg|~\xi_{1}'^{m_1'},...,\xi_{r}'^{m_r'},\prod_{i = 1}^{r} \xi_{i}'\prod_{j = 1}^{g_{0}} [\alpha_{i}',\beta_{i}']~\biggr\rangle.\tag{$2$} \label{eqn:eqn2}
\]
By Definition~\ref{defn:eq_data_sets}, there exists a permutation $\pi$ on $r$ letters such that exactly one of the following cases hold.

\noindent \textbf{Case (i).} $\sigma_i$ is conjugate to $\sigma_{\pi(i)}'$ in $A_n$ for all $i$.

\noindent \textbf{Case (ii)(a).} $\sigma_i$ is conjugate to $\sigma_{\pi(i)}'$ in $\Sigma_n$ for all $i$ and

\noindent \textbf{(b).} $\sigma_i$ is not conjugate to $\sigma_{\pi(i)}'$ in $A_n$ if the conjugacy class of $\sigma_i$ splits in $A_n$.

\noindent By Lemma \ref{lem:perm}, for the permutation $\pi$, there exist $h_1,h_2, \dots, h_r$ and $h$ such that: 
     $$h\circ\prod_{i = 1}^{r} {\xi_i}'\circ h^{-1}=\prod_{i = 1}^{r} h_i\circ \xi_{\pi(i)}'\circ h_{i}^{-1}.$$

We define a map $\psi$ by:
      \begin{center}
     $\begin{array}{ccll} 
     \psi(\xi_i) & := & h_i\circ \xi_{\pi(i)}'\circ h_{i}^{-1}, & \text{ for } 1 \leq i\leq r, \\
     \psi(\alpha_i) & := & h\circ \alpha_{i}'\circ h^{-1}, & \text{ for } 1\leq i\leq g_0, \text{ and }\\
     \psi(\beta_i) & := & h\circ \beta_{i}'\circ h^{-1}, & \text{ for }1\leq i\leq g_0.
\end{array}$
     \end{center}
Since we have \begin{eqnarray*}
       \psi \left(\prod_{i = 1}^{r} \xi_{i}\prod_{i = 1}^{g_{0}} [\alpha_{i},\beta_{i}]\right)
       &=&\prod_{i = 1}^{r} \psi(\xi_{i})\prod_{i = 1}^{g_{0}} [\psi(\alpha_{i}),\psi(\beta_{i})]\\
       &=&\prod_{i = 1}^{r} h_i\circ \xi_{\pi(i)}'\circ h_{i}^{-1}\prod_{i = 1}^{g_{0}} h\circ[\alpha_{i}',\beta_{i}']\circ h^{-1}\\
       &=&h\circ\prod_{i = 1}^{r} {\xi_{i}'}\circ h^{-1}h\circ\prod_{i = 1}^{g_{0}} [\alpha_{i}',\beta_{i}']\circ h^{-1}\\
       &=&h\circ\prod_{i = 1}^{r} \xi_{i}'\prod_{i = 1}^{g_{0}} [\alpha_{i}',\beta_{i}']\circ h^{-1}=h\circ 1\circ h^{-1}=1,
     \end{eqnarray*}     
$\psi$ extends to an isomorphism $\Gamma\to\Gamma'$. We now define an isomorphism $\tilde\chi:A_n\to A_n$ by $\tilde\chi=id$ for Case (ii)(a) and $\tilde\chi(\sigma)=(1~2)\circ\sigma\circ(1~2)$ for Case (ii)(b). Let $\epsilon_1,\epsilon_2 : A_n\hookrightarrow \homeo(S_g)$ be the actions uniquely determined by $\phi$ and $\phi'$, respectively. Take $H_i= \epsilon_i(A_n)$ for $i=1,2$, and consider $\chi: H_1\to H_2$ defined as $\chi=\epsilon_2\circ\tilde\chi\circ\epsilon_1^{-1}$. Since $\Gamma_i=\pi_1^{orb}(\Orb_{H_i})$ for $i=1,2$, $\phi_{H_1}=\epsilon_1\circ\phi$, and $\phi_{H_2}=\epsilon_2\circ\phi'$, we see that:
$$\phi_{H_2}\circ\psi(\xi_i)=\phi_{H_2}(h_i \circ \xi_{\pi(i)}'\circ h_{i}^{-1})=\phi_{H_2}(h_i) \circ \epsilon_2(\sigma_{\pi(i)}')\circ \phi_{H_2}(h_{i})^{-1}$$
and  $\chi\circ\phi_{H_1}(\xi_i) =(\epsilon_2\circ\tilde\chi\circ\epsilon_1^{-1})\circ(\epsilon_1\circ\phi)(\xi_i) = \epsilon_2\circ\tilde\chi(\sigma_i).$
Moreover, $\sigma_{\pi(i)}'$ is conjugate to $\tilde\chi(\sigma_i)$ in $A_n$ for both the cases, which implies $\epsilon_2(\sigma_{\pi(i)}')\in H_2$ and $\epsilon_2(\tilde\chi(\sigma_i))\in H_2$ are conjugate in $H_2$ for all $1\leq i \leq r$. Therefore, $H_1$ and $H_2$ satisfy the commutative diagram in Figure~\ref{fig:wkconjugacy}, which shows that $H_1$ and $H_2$ are weakly conjugate. Therefore, the map $\Theta_a$ is well-defined.

We now establish the surjectivity of $\Theta_a$. Let $H$ be a subgroup of $\homeo(S_g)$ and $\epsilon:A_n\hookrightarrow\homeo(S_g)$ be an embedding such that $\epsilon(A_n)=H$. Given this $H$-action, we want to construct an alternating data set of genus $g$ and degree $n$. Let $\phi_H:\pi_1^{orb}(\Orb_{H})\to H$ be the corresponding surface kernel epimorphism. Consider $\phi:\Gamma\to A_n$ defined by $\phi=\epsilon^{-1}\circ\phi_H$, where $\Gamma=\pi_1^{orb}(\Orb_{H})$. Then $\phi$ satisfies the hypothesis of Theorem \ref{thm:riemann}. Taking $\sig(\Gamma)=(g_0; m_1, \dots, m_r)$, and fixing a presentation of $\Gamma$ as in ($\ref{eqn:eqn1}$), we see that
$\sigma_i:=\phi(\xi_i) \in A_n$ and $\circ(\sigma_i)=m_i$. If $(k_{1i},\dots , k_{l_ii})$ is the cycle type of $\sigma_i$, then the tuple, $\D_a=\bigl(n, g_0; [\sigma_i, m_i; k_{1i},\dots , k_{l_ii}]_{i=1}^{r}\bigr)$ satisfies conditions (i)-(iv) of an alternating data set such that $\Theta_a([\D_a])=[\phi]$. Therefore, the surjectivity of $\Theta_a$ follows.

Finally, it remains to be shown that $\Theta_a$ is injective. In other words, if $H_1, H_2$ are two $A_n$-actions on $S_g$ that are weakly conjugate, then they must correspond under $\Theta_a$ to equivalent alternating data sets. Since $H_1, H_2$ are weakly conjugate, by definition, the diagram in Figure~\ref{fig:wkconjugacy} commutes up to conjugacy at the level of torsion elements. For $i=1,2$, let $\epsilon_i: A_n\hookrightarrow \homeo(S_g)$ be embeddings such that $\epsilon_i(A_n)=H_i$, and let $\phi_{H_i}:\pi_1^{orb}(\Orb_{H_i})\to H_i$ be the corresponding surface kernel epimorphisms. Consider $\phi_1:\Gamma\to\Sigma_n$ defined by $\phi_1=\epsilon_1^{-1}\circ\phi_{H_1}$ and $\phi_2:\Gamma'\to\Sigma_n$ defined by $\phi_2=\epsilon_2^{-1}\circ\phi_{H_2}$ , where $\Gamma=\pi_1^{orb}(\Orb_{H_1})$ and $\Gamma'=\pi_1^{orb}(\Orb_{H_2})$. Then, the $\phi_i$ satisfies the hypothesis of Theorem \ref{thm:riemann} for $i\in{1,2}$. Since $\epsilon_2^{-1}\circ\chi \circ \phi_{H_1}$ is another map satisfying the hypothesis Theorem \ref{thm:riemann} which is associated with the $H_1$-action, by uniqueness, it follows that $\epsilon_1^{-1}\circ\phi_{H_1}=\epsilon_2^{-1}\circ\chi\circ\phi_{H_1}$. Thus, we have $\epsilon_1^{-1}=\epsilon_2^{-1}\circ\chi$ since $\phi_{H_1}$ is surjective, which would imply that $\chi=\epsilon_2\circ\epsilon_1^{-1}$. Hence, the diagram in Figure \ref{fig:wkconjugacy} reduces to the following diagram, which commutes up to conjugacy at the level of torsion elements in $\Gamma$.
     \begin{center}
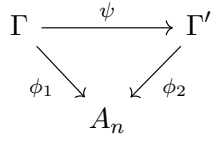

 \begin{tikzcd}[column sep=small]
 \Gamma \arrow{rr}{\psi} \arrow[swap]{dr}{\phi_1}& &\Gamma' \arrow{dl}{\phi_2}\\
 & A_n & 
 \end{tikzcd}
 \captionof{figure}{The reduced commutative diagram.}
    \end{center}

\noindent Since $\psi$ is an isomorphism, $\sig(\Gamma)= \sig(\Gamma')$. We fix the presentation of $\Gamma$ and $\Gamma'$ as in (\ref{eqn:eqn1}) and (\ref{eqn:eqn2}), respectively. Applying Theorem \ref{thm:zieschang}, $\psi(\xi_i)$ is conjugate to $\xi_{\pi(i)}'$ for some permutation $\pi$ on $r$ letters. Hence, $\phi_2\circ\psi(\xi_i)$ is conjugate to $\phi_2(\xi_{\pi(i)}')$. Therefore, we have $\sigma_i=\phi_1(\xi_i)$ is conjugate to $\phi_2(\xi_{\pi(i)}')=\sigma_{\pi(i)}'$. Hence, the data sets $\D_a$ and $\D_a'$ associated with the actions $\phi_1$ and $\phi_2$, respectively, are equivalent.
\end{proof}
In view of Proposition~\ref{prop:main1}, the weak conjugacy class of a $A_n$-action on $S_g$ will be represented by an alternating data set $\D_a$. Let $\epsilon:A_n\hookrightarrow \homeo(S_g)$ be an embedding associated with $\D_a$. Then, for simplicity of notation, we will denote the conjugacy class $D_{\epsilon(\sigma)}$, for each $\sigma \in \Sigma_n$, by $\D_a[\sigma]$. A careful application of Lemma~\ref{lem:subaction} yields the following.  
\begin{prop}
\label{prop:cyclicgen}
Let $\D_a =\bigl(n, g_0; [\sigma_i, m_i; k_{1i},\dots , k_{l_ii}]_{i=1}^{r}\bigr)$ represent the weak conjugacy class of an $A_n$-action on $S_g$. For $\sigma \in A_n$ with $|\sigma|=d$ , we have $$\D_a[\sigma]=\left(d,\tilde{g_0};(u_{ij}^{-1},t_i)^{[\mho_{\sigma,i,j}]}: u_{ij}\in\mathbb{Z}_{t_i}^{\times}, ~t_i~|~d\right),$$ 
where $\tilde{g_0}$ is determined by the  Riemann-Hurwitz equation, and $\mho_{\sigma,i,j}$ is given by the formula
$$\frac{d}{t_i}\cdot\mho_{\sigma,i,j}= \left|\mathbb{F}_{\sigma^{d/t_i}}(u_{ij},t_i)\right|- \sum\limits_{\substack{t_i'\in \N \\t_i'\neq t_i\\t_i|t_i'|d}}\sum\limits_{\substack{ (u_{i'j'},t_{i'})=1\\u_{ij}\equiv u_{i'j'}(\mathrm{mod}~t_i)}}\mho_{\sigma,i',j'}~.$$
\end{prop}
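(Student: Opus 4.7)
The plan is to apply Lemma~\ref{lem:subaction} to the $A_n$-action, interpret the fixed-point counts it produces as weighted sums over the $\langle \epsilon(\sigma)\rangle$-orbits of non-trivial-stabilizer points, and then solve the resulting triangular system of equations for the cyclic data multiplicities $\mho_{\sigma,i,j}$. Recall that the cyclic data set $\D_a[\sigma]=D_{\epsilon(\sigma)}$ encodes the branched cover $S_g\to S_g/\langle \epsilon(\sigma)\rangle$: a pair $(u_{ij}^{-1},t_i)$ with $t_i\mid d$ and $\gcd(u_{ij},t_i)=1$ corresponds to a cone point of order $t_i$ in the quotient whose preimage is a $\langle\epsilon(\sigma)\rangle$-orbit of size $d/t_i$, each of whose points has stabilizer $\langle \epsilon(\sigma)^{d/t_i}\rangle$ acting locally by rotation through angle $2\pi u_{ij}^{-1}/t_i$. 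So $\mho_{\sigma,i,j}$ is the number of such orbits.

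First, for each divisor $t_i$ of $d$ and each $u_{ij}\in\Z_{t_i}^\times$, I would compute $|\mathbb{F}_{\sigma^{d/t_i}}(u_{ij},t_i)|$ directly from Lemma~\ref{lem:subaction} applied with $H=\epsilon(A_n)$ and $G=\epsilon(\sigma)^{d/t_i}$; the inputs are the signature $(g_0;m_1,\dots,m_r)$ and the surface kernel epimorphism $\phi_H$ already determined by $\D_a$, so this expresses the left-hand side purely in terms of $\D_a$. Second, I would decompose this count along the $\langle\epsilon(\sigma)\rangle$-orbit structure: a point $x\in S_g$ lies in $\mathbb{F}_{\sigma^{d/t_i}}(u_{ij},t_i)$ precisely when its $\langle\epsilon(\sigma)\rangle$-stabilizer has some order $t_i'$ with $t_i\mid t_i'\mid d$, and its "true" cyclic-data parameter $u_{i'j'}\in\Z_{t_i'}^\times$ satisfies $u_{i'j'}\equiv u_{ij}\pmod{t_i}$. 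The congruence arises because raising the local generator $\sigma^{d/t_i'}$ of the stabilizer (which rotates by $2\pi u_{i'j'}^{-1}/t_i'$) to the power $t_i'/t_i$ yields the rotation angle $2\pi u_{i'j'}^{-1}/t_i$ of $\sigma^{d/t_i}$.

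Each such orbit of "true" type $(u_{i'j'}^{-1},t_{i'})$ contributes its orbit size to the fixed-point count, giving the key identity
\[
|\mathbb{F}_{\sigma^{d/t_i}}(u_{ij},t_i)|=\sum_{\substack{t_i'\in\N\\t_i\mid t_i'\mid d}}\sum_{\substack{(u_{i'j'},t_{i'})=1\\ u_{ij}\equiv u_{i'j'}\,(\mathrm{mod}\,t_i)}}(\text{orbit-size})\cdot\mho_{\sigma,i',j'}.
\]
Isolating the $t_i'=t_i$ summand produces the stated recursion for $(d/t_i)\cdot\mho_{\sigma,i,j}$, which is a triangular system ordered by the divisibility poset of $d$: starting from the top ($t_i'=d$), where no subtraction is needed, I would solve successively for each $\mho_{\sigma,i,j}$ as $t_i$ decreases. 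Finally, the quotient genus $\tilde{g_0}$ is obtained by feeding the computed $(u_{ij}^{-1},t_i)^{[\mho_{\sigma,i,j}]}$ into the Riemann-Hurwitz formula for the degree-$d$ cyclic cover $S_g\to S_g/\langle\epsilon(\sigma)\rangle$.

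The main obstacle is the bookkeeping in the second step: one must verify carefully that the $A_n$-conjugacy criterion in Lemma~\ref{lem:subaction} (which governs which cone points $x_i$ of $\Orb_H$ contribute to $\mathbb{F}_G(u,m)$) correctly refines through the cyclic subgroup $\langle\epsilon(\sigma)\rangle$ to the clean congruence condition $u_{ij}\equiv u_{i'j'}\pmod{t_i}$, and that the induced local rotation data transforms correctly under taking powers so that inverses modulo $t_i$ and modulo $t_i'$ are consistent. Once this translation is made precise, the recursion is forced on us.
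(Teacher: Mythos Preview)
Your approach is exactly what the paper intends: the paper offers no proof beyond the single sentence ``A careful application of Lemma~\ref{lem:subaction} yields the following,'' and your outline is precisely that careful application---compute the fixed-point counts of the powers $\sigma^{d/t_i}$ via Lemma~\ref{lem:subaction}, stratify by $\langle\epsilon(\sigma)\rangle$-stabilizer order, and invert the resulting upper-triangular system on the divisor poset of $d$. One point to watch when you make ``(orbit-size)'' explicit: each orbit of true type $(u_{i'j'}^{-1},t_{i'})$ contributes $d/t_{i'}$ fixed points, so your identity yields a factor $d/t_{i'}$ in front of $\mho_{\sigma,i',j'}$ in the subtracted sum, whereas the displayed formula in the proposition omits it---reconcile this (it is either a normalization or a typo in the statement), but the recursive structure and its solvability are exactly as you describe.
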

\noindent Thus, we obtain the first main result of this paper, which follows directly from Propositions~\ref{prop:main1} and~\ref{prop:cyclicgen}, and Remark~\ref{rem:weak_conj}.
\begin{theorem}[Main Theorem 1]\label{thm:main1}
Let $F, G \in \map(S_g)$ be two periodic mapping classes. Then $\langle F, G\rangle_w \cong A_n$ if and only if there exists an alternating data set $\D_a$ of genus $g$ and degree $n$ such that the cyclic data sets $D_{F}=\D_a[\sigma]$ and $D_{G}=\D_a[\tau]$ for some generating pair $\sigma,\tau \in A_n$.
\end{theorem}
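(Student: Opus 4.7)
The plan is to derive the theorem directly by combining the three cited results, treating it as the natural translation between the weak-conjugacy framework built in Proposition~\ref{prop:main1} and the statement about pairs of mapping classes. The proof splits into two implications, both of which proceed by realizing mapping classes as isometries via the Nielsen--Kerckhoff theorem (as encoded in Remark~\ref{rem:weak_conj}) and then reading off cyclic data from the ambient $A_n$-action using Proposition~\ref{prop:cyclicgen}.

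For the forward direction, I would start by assuming $\langle F,G\rangle_w \cong A_n$, so there exist conjugates $F', G'$ in $\map(S_g)$ with $\langle F',G'\rangle \cong A_n$. By Remark~\ref{rem:weak_conj} this subgroup has a well-defined weak conjugacy class, which by Proposition~\ref{prop:main1} corresponds under $\Theta_a^{-1}$ to an equivalence class of alternating data sets $[\D_a]$. Pick a representative embedding $\epsilon : A_n \hookrightarrow \homeo(S_g)$ with image $H=\langle F', G'\rangle$, and let $\sigma, \tau \in A_n$ be the preimages of $F', G'$; these generate $A_n$ because $\epsilon$ is an isomorphism onto $H$. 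Proposition~\ref{prop:cyclicgen} then identifies $D_{F'} = \D_a[\sigma]$ and $D_{G'} = \D_a[\tau]$, and since $F \sim F'$ and $G \sim G'$ in $\map(S_g)$ implies $D_F = D_{F'}$ and $D_G = D_{G'}$, the required alternating data set and generating pair are produced.

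For the converse, I would reverse each step. Given an alternating data set $\D_a$ of genus $g$ and degree $n$ together with a generating pair $\sigma, \tau \in A_n$ satisfying $D_F = \D_a[\sigma]$ and $D_G = \D_a[\tau]$, apply Proposition~\ref{prop:main1} to obtain an $A_n$-action $\epsilon : A_n \hookrightarrow \homeo(S_g)$ representing $[\D_a]$. Set $F' := \epsilon(\sigma)$ and $G' := \epsilon(\tau)$ and let $H := \epsilon(A_n)$, which under the natural projection $\homeo(S_g) \to \map(S_g)$ embeds (finite subgroups of $\homeo(S_g)$ inject into $\map(S_g)$) as a subgroup generated by the images of $F', G'$. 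By Proposition~\ref{prop:cyclicgen}, $D_{F'} = \D_a[\sigma] = D_F$ and $D_{G'} = \D_a[\tau] = D_G$; invoking the Nielsen correspondence between cyclic data sets and conjugacy classes of periodic mapping classes, $F'$ and $G'$ are conjugate in $\map(S_g)$ to $F$ and $G$ respectively, yielding $\langle F, G\rangle_w \cong A_n$.

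Since the substantive content has already been extracted in Propositions~\ref{prop:main1} and~\ref{prop:cyclicgen}, I do not expect genuine obstacles. The only point requiring minor care is the bookkeeping that the choice of $\sigma, \tau \in A_n$ must actually generate $A_n$ in the converse (otherwise $\langle F', G'\rangle$ would only be a proper subgroup of $H$); this is precisely the hypothesis that the pair is a generating pair. A second small check is that passing between conjugacy in $\homeo(S_g)$ and in $\map(S_g)$ preserves cyclic data, which is exactly the content of the Nielsen--Kerckhoff theorem invoked in the paper. Hence the proof reduces to a short assembly, and I would present it in two paragraphs mirroring the two directions above.
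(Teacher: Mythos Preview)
Your proposal is correct and matches the paper's own treatment: the paper simply states that Theorem~\ref{thm:main1} ``follows directly from Propositions~\ref{prop:main1} and~\ref{prop:cyclicgen}, and Remark~\ref{rem:weak_conj},'' and your two paragraphs spell out precisely that assembly. The only cosmetic point is that the equality $D_{F'}=\D_a[\sigma]$ is the \emph{definition} of $\D_a[\sigma]$ (given just before Proposition~\ref{prop:cyclicgen}) rather than a consequence of the proposition itself; Proposition~\ref{prop:cyclicgen} is what guarantees this quantity is determined by the data set.
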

\noindent In Theorem \ref{thm:main1}, we will refer to $\D_a[\sigma]$ and $\D_a[\tau]$ as the \textit{cyclic factors} associated with $\D_a$. If $\sigma=\sigma_a$ and $\tau=\tau_a$ are the standard generating pair (as defined at the beginning of Subsection~\ref{subsec:sym_alt_actions}), then the corresponding cyclic factors are called the \textit{standard cyclic factors} associated with $\D_a$.
\begin{exmp}[Icosahedral action]
\label{exmp:exmp1}
We know that the symmetry group of an icosahedron is $A_5$. Consider the surface $S_{19}$ embedded in $\mathbb{R}^3$ in the shape of an icosahedron shown in Figure \ref{fig:icosa}.
\begin{figure}[H]
\labellist
\tiny
\pinlabel $A$ at 154 279
\pinlabel $\frac{2\pi}{5}$ at 175 316
\pinlabel $\G$ at 110 316
\pinlabel $\frac{2\pi}{3}$ at 150 145
\pinlabel $\F$ at 110 145
\pinlabel $B$ at 154 4
\pinlabel $C$ at 142 85
\pinlabel $D$ at 188 167
\pinlabel $E$ at 88 167
\endlabellist
\centering
   \includegraphics[scale=.48]{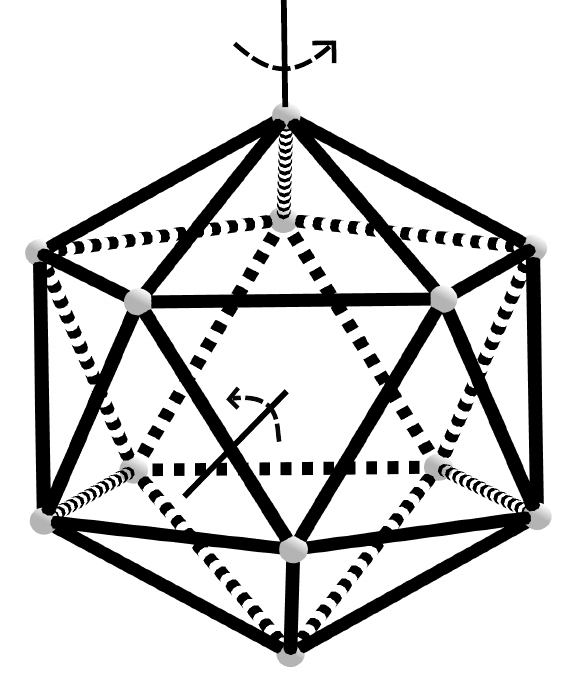}
  \caption{An $A_5$-action on $S_{19}$.}
  \label{fig:icosa}
\end{figure}
Let $\F \in \homeo(S_{19})$ be the free rotation by an angle $2\pi/3$ around the axis passing through the mid-points of the face $CDE$, and its opposite face and $\G$ be the $2\pi/5$ rotation around the axis passing through $A$ and $B$. Then $\F,\G$ generate an $A_5$-action on $S_{19}$ whose weak conjugacy class is represented by an alternating data set $$\D_a=(5,0;[(1~2)(3~4),2;2,2]^{[2]}, [(1~5~4~3~2),5;5], [(1~2~3~4~5),5;5]).$$
Moreover, we note that $D_F=\D_a[\sigma_a]=(3,7;-)$ and $D_G=\D_a[\tau_a]=(5,3;(1,5)^{[2]},(4,5)^{[2]})$
are also the standard cyclic factors of $\D_a$. 
\end{exmp}
\begin{exmp}[Dodecahedral action]
\label{exmp:exmp2}
Consider the surface $S_{11}$ embedded in $\mathbb{R}^3$ in the shape of a dodecahedron shown in Figure \ref{fig:icosa}. Since the dodecahedron is the dual of the icosahedron as a platonic solid, we obtain an $A_5$-action on $S_{11}$ (from the action in Example~\ref{exmp:exmp1}), as shown in Figure \ref{fig:dodeca}.
\begin{figure}[H]
\labellist
\tiny
\pinlabel $A$ at 138 0
\pinlabel $B$ at 198 25
\pinlabel $C$ at 245 70
\pinlabel $D$ at 206 100
\pinlabel $E$ at 134 60
\pinlabel $Z$ at 144 241
\pinlabel $\frac{2\pi}{5}$ at 83 250
\pinlabel $\F$ at 105 273
\pinlabel $\G$ at 20 230
\pinlabel $\frac{2\pi}{3}$ at 165 270
\pinlabel $Y$ at 92 202
\pinlabel $X$ at 163 44
\endlabellist
\centering
   \includegraphics[scale=.55]{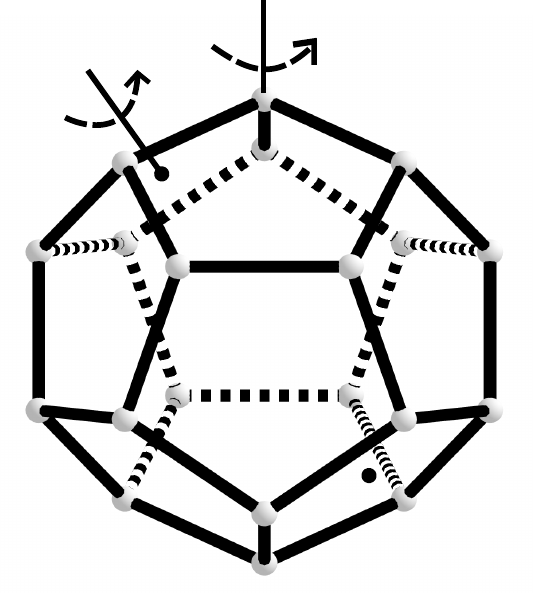}
  \caption{An $A_5$-action on $S_{11}$.}
  \label{fig:dodeca}
  \end{figure}
The weak conjugacy class of this action is represented by an alternating data set $\D_a=(5,0;[(1 3)(2 4),2;2,2]^{[2]},[(3 5 4),3;3],[(3 4 5),3;3])$, and the cyclic factors are given by cyclic data sets
$D_F=\D_a[\sigma_a]=[(3,3;(1,3)^{[2]},(2,3)^{[2]})$ and $D_G=\D_a[\tau_a]=(5,3;-)$.

\end{exmp}
\begin{exmp}[Cubic action]
\label{exmp:alt_cube}
By restricting the $\Sigma_4$-action described in Figure \ref{fig:cube} (of Section~\ref{sec:intro}), we get an $A_4$-action on $S_5$ whose weak conjugacy class is represented by the alternating data set $$\D_a=(4,0;[(1~2~3),3;3], [(1~2~3),3;3],[(2~3~4),3;3], [(2~3~4),3;3])$$
and the standard cyclic factors are given by cyclic data sets
$$\D_a[\sigma_a] =\D_a[\tau_a] =(3,1;(1~3)^{[2]},(2~3)^{[2]}).$$
\end{exmp}

\begin{exmp}[Octahedral action]
\label{exmp:alt_octa}
Consider the surface $S_{7}$ embedded in $\mathbb{R}^3$ in the shape of an octahedron shown in Figure \ref{fig:icosa}. Since the octahedron is the dual of the cube as a platonic solid, we also obtain an $\Sigma_4$-action on $S_{7}$ (from the $\Sigma_4$-action in Figure \ref{fig:cube} of Section~\ref{sec:intro}). By restricting this action, we get an $A_4$-action on $S_7$ whose weak conjugacy class is represented by the alternating data set $\D_a^1=(4,1;[(1~2)(3~4),2;2,2]^{[2]})$, and the standard cyclic factors are given by cyclic data sets $\D_a[\sigma_a] = \D_a[\tau_a] = (3,3;-).$
\end{exmp}
\begin{exmp}[Tetrahedral action]
Since the symmetry group of the tetrahedron is $A_4$, we get an $A_4$-action on $S_3$ as before, whose weak conjugacy class is represented by an alternating data set $$\D_a=(4,0;[(1~2)(3~4),2;2,2]^{[2]},[(2~4~3),3;3],[(2~3~4),3;3])$$
and the standard cyclic factors are given by cyclic data sets $\D_a[\sigma_a] = \D_a[\tau_a] = (3,1;(1,3),(2,3)).$
\end{exmp}

\subsection{Symmetric actions} We begin by defining a notion of a symmetric data set and an equivalence on these sets that is analogous to an alternating data set.
\begin{defn}\label{defn:sym_data_set}
A \textit{symmetric data set $\D_s$ of degree $n$ and genus $g\geq2$} is an ordered tuple $$(n, g_0; [\sigma_1, m_1; k_{11},\dots , k_{l_11}], \dots, [\sigma_r, m_r; k_{1r},\dots , k_{l_rr}]),$$ where $n,g_0$ are integers with $n\geq 3$ and $g_0 \geq 0$, and the $\sigma_i \in \Sigma_n$ satisfying the following conditions:
   \begin{enumerate}[(i)]
   \item  $$ \frac{2-2g}{n\, !/2}=  2-2g_{0}-\sum_{i=1}^{r} \left(1-\frac{1}{m_i}\right).$$
     \item $m_i$ is the order of $\sigma_i$ and $(k_{1i},\ldots,k_{l_ii})$ is the cycle type of $\sigma_i$.
\item For $g_0=0$, we have:
\begin{enumerate}[(a)]
    \item $\langle \sigma_1,\ldots,\sigma_i \rangle = \Sigma_n$ and 
    \item $\prod_{i = 1}^{r} \sigma_{i}= 1$.
\end{enumerate}
\item For $g_0=1$, there exist $\sigma_{r+1}, \sigma_{r+2} \in \Sigma_n$ such that:
\begin{enumerate}[(a)]
    \item $\langle \sigma_1,\ldots,\sigma_i, \sigma_{r+1}, \sigma_{r+2} \rangle = \Sigma_n$ and
    \item $\prod_{i = 1}^{r} \sigma_{i}= [\sigma_{r+2}, \sigma_{r+1}]$. 
    \end{enumerate}
\item For $g_0 \geq 2$, the permutation $\prod_{i = 1}^{r} \sigma_{i}$ is even.
 \end{enumerate}
\end{defn}
\begin{defn}
Two symmetric data sets $$\D_s=(n, g_0; [\sigma_1, m_1; k_{11},\dots , k_{l_11}], \dots, [\sigma_r, m_r; k_{1r},k_{2r},\dots , k_{l_rr}])$$ and $$\D_s'=(n, g_0; [\sigma'_1, m'_1; k_{11}',\dots , k'_{l_1'1}], \dots, [\sigma'_r, m'_r; k'_{1r},k'_{2r},\dots , k'_{l_r'r}])$$ are said to be \textit{equivalent} if there exists a permutation $\pi\in\Sigma_r$ such that $\sigma_i$ is conjugate to $\sigma_{\pi(i)}'$ in $\Sigma_r$, i.e.
\begin{enumerate}[(i)]
    \item $l_i=l_{\pi_i}'$ and
    \item $(k_{1i},\ldots , k_{l_ii})= (k'_{1\pi(i)},\ldots , k'_{l_i\pi(i)})$.
\end{enumerate}
\end{defn}
\noindent The proof of the following proposition, which allows us to encode the weak conjugacy classes of symmetric actions using symmetric data sets, is similar to Proposition~\ref{prop:main1}.
\begin{prop}
\label{prop:main2}
 Equivalence classes of symmetric data sets of genus $g$ and degree $n$ correspond to weak conjugacy classes of $\Sigma_n$-actions on $S_g$.
\end{prop}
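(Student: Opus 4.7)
The plan is to follow the same bijection strategy used for $\Theta_a$ in Proposition \ref{prop:main1}, with the key differences arising from the fact that (a) conjugacy classes in $\Sigma_n$ are determined by cycle type alone, so the equivalence of symmetric data sets has no two-case split analogous to Definition \ref{defn:eq_data_sets}(ii), and (b) there is an additional parity condition (v) on $\prod_i \sigma_i$ when $g_0 \geq 2$. I would construct a map $\Theta_s$ from equivalence classes of symmetric data sets of genus $g$ and degree $n$ to weak conjugacy classes of $\Sigma_n$-actions on $S_g$ by the same recipe: given $\D_s = (n, g_0; [\sigma_i, m_i; k_{1i}, \dots, k_{l_i i}]_{i=1}^r)$, fix a Fuchsian group $\Gamma$ with $\sig(\Gamma) = (g_0; m_1, \dots, m_r)$ presented as in (\ref{eqn:eqn1}), and define $\phi : \Gamma \to \Sigma_n$ by $\phi(\xi_i) := \sigma_i$, with the images of the $\alpha_j, \beta_j$ chosen exactly as in the alternating case: conditions (iii) and (iv) directly handle $g_0 \in \{0,1\}$, while for $g_0 \geq 2$ I set $\phi(\alpha_1,\beta_1) := (\sigma_s,\tau_s)$ to force surjectivity, $\phi(\alpha_i) = \phi(\beta_i) := 1$ for $3 \leq i \leq g_0$, and select $(\rho_1,\rho_2) := \phi(\alpha_2,\beta_2)$ so that the long defining relation is satisfied.

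The critical step in the construction is finding such a pair $(\rho_1,\rho_2)$ in the $g_0 \geq 2$ case. The required commutator is $[\rho_1,\rho_2] = [\tau_s,\sigma_s] \cdot \prod_{i=1}^r \sigma_i^{-1}$, and this element lies in $A_n = [\Sigma_n,\Sigma_n]$ precisely because condition (v) forces $\prod_i \sigma_i$ to be even. For $n \geq 5$, Theorem \ref{thm:Ore} realizes every element of $A_n$ as a commutator in $A_n$, hence in $\Sigma_n$; the small cases $n = 3, 4$ can be handled by direct verification. Verifying that $\phi$ is an order-preserving surface kernel epimorphism and invoking the Riemann--Hurwitz identity (i) together with Theorem \ref{thm:riemann} then produces a $\Sigma_n$-action on $S_g$, whose weak conjugacy class we declare to be $\Theta_s([\D_s])$.

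For well-definedness, suppose $\D_s$ and $\D_s'$ are equivalent via some $\pi$. Because conjugacy in $\Sigma_n$ is detected by cycle type, each $\sigma_i$ is automatically $\Sigma_n$-conjugate to $\sigma_{\pi(i)}'$, so Lemma \ref{lem:perm} supplies elements $h, h_1, \dots, h_r \in \Sigma_n$ from which one builds an isomorphism $\psi : \Gamma \to \Gamma'$ by $\psi(\xi_i) := h_i\, \xi_{\pi(i)}'\, h_i^{-1}$ and $\psi(\alpha_j), \psi(\beta_j)$ conjugated by $h$. Taking $\chi := \epsilon_2 \circ \epsilon_1^{-1}$ (the analogue of $\tilde\chi$ can be chosen to be the identity, since there is no counterpart of the $A_n$ class-splitting phenomenon), the diagram of Figure \ref{fig:wkconjugacy} commutes on torsion elements, establishing weak conjugacy. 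This step is strictly simpler than its alternating counterpart.

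Surjectivity and injectivity then transcribe verbatim from Proposition \ref{prop:main1}. Given a $\Sigma_n$-action with surface kernel epimorphism $\phi_H$, setting $\phi := \epsilon^{-1} \circ \phi_H$ and reading off $\sigma_i := \phi(\xi_i)$ together with their cycle types yields a tuple satisfying (i)--(iv); condition (v) is automatic because the Fuchsian relation forces $\prod_i \sigma_i$ to equal a product of commutators, hence to lie in $A_n$. Injectivity follows from Theorem \ref{thm:zieschang}: a weak conjugacy of two $\Sigma_n$-actions collapses to the triangle of Figure \ref{fig:wkconjugacy}, and Zieschang's theorem produces a permutation $\pi$ such that $\psi(\xi_i)$ is conjugate to $\xi_{\pi(i)}'$, so $\phi_1(\xi_i)$ is $\Sigma_n$-conjugate to $\phi_2(\xi_{\pi(i)}')$, which by cycle-type equality of conjugate permutations gives equivalence of the data sets. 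The only genuinely new ingredient relative to Proposition \ref{prop:main1} is the parity check in the $g_0 \geq 2$ construction that licenses the use of Ore's theorem, and this is the main (mild) obstacle the argument has to negotiate.
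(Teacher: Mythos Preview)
Your approach is correct and is precisely the adaptation the paper has in mind when it says the proof is ``similar to Proposition~\ref{prop:main1}'': the only substantive changes are that cycle type alone governs $\Sigma_n$-conjugacy (so $\tilde\chi=\mathrm{id}$ suffices and no case split is needed) and that condition~(v) guarantees the element to be realized as $[\rho_1,\rho_2]$ lies in $A_n$, licensing Ore's theorem. One small slip: in the well-definedness step, the elements $h,h_1,\dots,h_r$ produced by Lemma~\ref{lem:perm} must lie in $\Gamma'$, not in $\Sigma_n$, since $\psi(\xi_i)=h_i\,\xi_{\pi(i)}'\,h_i^{-1}$ is supposed to be an element of $\Gamma'$; the lemma is being applied to the group $\Gamma'$ with $c_i=\xi_i'$, exactly as in the alternating proof.
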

\noindent We note that an analog of Proposition~\ref{prop:cyclicgen} also holds for a symmetric data set $\D_s$. However, for brevity, we refrain from explicitly stating it. This brings us to the second main result of this paper, whose proof is similar to Theorem~\ref{thm:main1}.
\begin{theorem}[Main theorem 2]
\label{thm:main2}
Let $F, G \in \map(S_g)$ be two periodic mapping classes. Then $\langle F, G\rangle_w \cong \Sigma_n$ if and only if there exists a symmetric data set $\D_s$ of genus $g$ and degree $n$, such that the cyclic data sets $D_{F}=\D_s[\sigma]$ and $D_{G}=\D_s[\tau]$ for some generating pair $\sigma,\tau \in \Sigma_n$.
\end{theorem}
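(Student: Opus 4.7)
The plan is to mirror the proof of Theorem~\ref{thm:main1}, replacing each alternating ingredient with its symmetric counterpart: Proposition~\ref{prop:main2} takes over the role of Proposition~\ref{prop:main1}, and one invokes the (unstated but directly analogous) symmetric version of Proposition~\ref{prop:cyclicgen}, obtained by applying Lemma~\ref{lem:subaction} verbatim to the $\Sigma_n$-action so that the formula for the multiplicities $\mho$ computes $\D_s[\sigma]$ from $\D_s$. The transfer from isometry groups to mapping class groups then proceeds via the Nielsen--Kerckhoff theorem and Remark~\ref{rem:weak_conj}.

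For the forward direction, I would start with periodic $F, G \in \map(S_g)$ satisfying $\langle F, G\rangle_w \cong \Sigma_n$ and pick conjugates $F', G'$ of $F, G$ with $H := \langle F', G'\rangle \cong \Sigma_n$. Realizing $H$ as a finite group of hyperbolic isometries on $S_g$, fix an embedding $\epsilon : \Sigma_n \hookrightarrow \homeo(S_g)$ with $\epsilon(\Sigma_n) = H$, and let $\sigma = \epsilon^{-1}(\F')$, $\tau = \epsilon^{-1}(\G')$ where $\F', \G'$ are the Nielsen representatives of $F', G'$. Proposition~\ref{prop:main2} encodes the weak conjugacy class of $H$ by a symmetric data set $\D_s$ of genus $g$ and degree $n$, and by the very definition of the notation $\D_s[\cdot]$ one has $D_F = D_{F'} = \D_s[\sigma]$ and $D_G = D_{G'} = \D_s[\tau]$, with $\sigma, \tau$ generating $\Sigma_n$ since $\epsilon$ is an isomorphism onto $H$.

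Conversely, given such a symmetric data set $\D_s$ and a generating pair $\sigma, \tau \in \Sigma_n$ with $\D_s[\sigma] = D_F$ and $\D_s[\tau] = D_G$, Proposition~\ref{prop:main2} produces an embedding $\epsilon : \Sigma_n \hookrightarrow \homeo(S_g)$ realizing $[\D_s]$. Then $\F' := \epsilon(\sigma)$ satisfies $D_{\F'} = \D_s[\sigma] = D_F$, so by the bijection between cyclic data sets and conjugacy classes of periodic mapping classes, $F$ is conjugate in $\map(S_g)$ to the class $F'$ of $\F'$; similarly for $G$. Since $\langle \sigma, \tau\rangle = \Sigma_n$ and $\epsilon$ is injective, the Nielsen representatives $\F', \G'$ generate $H \cong \Sigma_n$, whence $\langle F', G'\rangle \cong \Sigma_n$, yielding $\langle F, G\rangle_w \cong \Sigma_n$.

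The only nontrivial point is the symmetric analog of Proposition~\ref{prop:cyclicgen}, and in fact this is \emph{easier} than its alternating counterpart: since conjugacy classes in $\Sigma_n$ are determined entirely by cycle type, the class-splitting phenomenon responsible for the Case~(ii)(b) branch in the proof of Proposition~\ref{prop:main1} simply does not arise, and Definition~\ref{defn:sym_data_set} already captures all the equivalence data required. Well-definedness and injectivity of the encoding $\Theta_s : [\D_s] \mapsto [\phi]$ proceed as in Proposition~\ref{prop:main1}, using Zieschang's Theorem~\ref{thm:zieschang} together with Lemma~\ref{lem:perm}, while the isomorphism $\chi$ on $\Sigma_n$ used in the well-definedness step may be taken to be the identity (for $n \neq 6$, by Theorem~\ref{lem:inner}), which streamlines the diagram chase.
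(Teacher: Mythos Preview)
Your proposal is correct and follows precisely the approach the paper indicates: the paper simply states that the proof of Theorem~\ref{thm:main2} is similar to that of Theorem~\ref{thm:main1}, which in turn is declared to follow directly from Propositions~\ref{prop:main1}--\ref{prop:cyclicgen} and Remark~\ref{rem:weak_conj}. You have spelled out both directions in more detail than the paper itself provides, and your observation that the symmetric case is actually simpler (no conjugacy-class splitting, so the $\chi$ in the well-definedness step can be taken to be the identity) is accurate and consistent with the paper's treatment.
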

\noindent In Theorem \ref{thm:main2}, $\D_s[\sigma]$ and $\D_s[\tau]$ are also called  \textit{cyclic factors} associated with $\D_s$. If $\sigma=\sigma_s$ and $\tau=\tau_s$ are the standard generating pair, then the corresponding cyclic factors are called the \textit{standard cyclic factors} associated with $\D_s$. 
\begin{exmp}
Consider the $\Sigma_4$-action on $S_5$ as described in Figure \ref{fig:cube}. The weak conjugacy class of this action can now be represented as $$\D_s=(4,0;[(1~2),2;2], [(3~4),2;2], [(1~2~3),3;3], [(2~3~4),3;3]),$$
and the standard cyclic factors associated with $\D_s$ are given by $D_F=\D_s[\sigma_s]=(2,4;(1~2)^{[4]})$ and $D_G=\D_s[\tau_s]=(4,2;-).$
\end{exmp}
\begin{exmp}
Consider the $\Sigma_4$-action on $S_7$ given by the symmetry group of octahedron. The weak conjugacy class of this action can be represented as $$\D_s=(4,0;[(3~4),2;2]^{[2]}, [(1~2~3~4),4;4], [(1~4~3~2),4;4]),$$
and the standard cyclic factors associated with $\D_s$ are given by $D_F=\D_s[\sigma_s]=(2,4;(1~2)^{[4]})$ and $D_G=\D_s[\tau_s]=(4,2;-).$
\begin{figure}[ht]
\labellist
\tiny
\pinlabel $A$ at 490 737
\pinlabel $B$ at 950 505
\pinlabel $\F$ at 1055 460
\pinlabel $\pi/2$ at 1065 552
\pinlabel $C$ at 490 620
\pinlabel $D$ at 982 623
\pinlabel $\G$ at 1075 610
\pinlabel $\pi$ at 1070 710
\endlabellist
\centering
   \includegraphics[scale=.23]{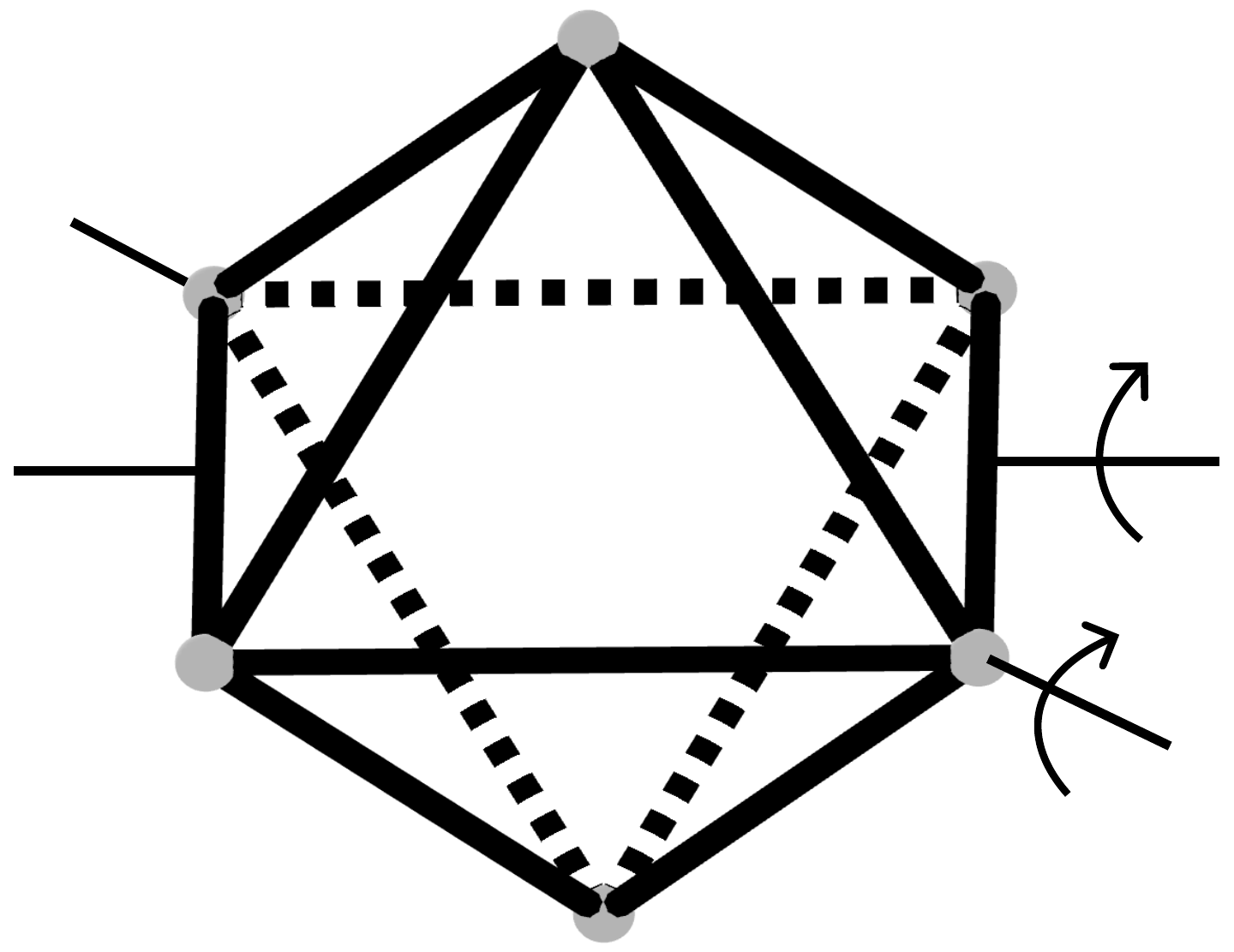}
  \caption{An $\Sigma_4$-action on $S_7$.}
  \label{fig:octahedron}
\end{figure}
It is worth mentioning that there is another $\Sigma_4$-action on $S_7$ represented by a symmetric data set $$\D_s=(4,0;[(1~2),2;2],[(2~3),2;2]^{[2]},[(3~4),2;2], [(1~2)(3~4),2;2,2]).$$

\end{exmp}
\noindent We now give an example of a non-free symmetric action whose component cyclic actions are free.   
\begin{exmp}
Consider a family of symmetric actions represented by $$\D_s=(n,1;[(1~2~3),3;3])~~\text{for}~~ n\geq5.$$
Then, the associated standard cyclic factors are given by $$\D_s[\sigma_s]=(2,\frac{6+n!}{6};-) ~\text{and}~ \D_s[\tau_s]=(n,\frac{3+(n-1)!}{3};-).$$
\end{exmp}
\noindent The following example provides a non-free symmetric action that is the (group) extension of a free $\Z_2$-action by a free alternating action.
\begin{exmp}\label{exmp:exmp4}
We know that $\Sigma_n= A_n \rtimes \langle(1~2)\rangle$ as an internal semi-direct product. Consider another family of symmetric actions given by: $$\D_s=(n,2;[(1~2)(3~4)(5~6),2;2,2,2]^{[2]})~~\text{for}~~n\geq6.$$
The associated alternating and cyclic data sets are given by $\D_a=(n,4;-)$ and $\D_s[\sigma_s]=(2,\frac{4+3\cdot n!}{4};-)$, respectively. 
\end{exmp}
\section{Lifting involutions under alternating covers}\label{section:involutions} Consider an $H$-action on $S_g$, where $H \cong A_n$, and the induced branched cover $p: S_g \to \Orb_H= S_g/H$. Since $A_n \lhd \Sigma_n$ and $[\Sigma_n:A_n] = 2$, every $\Sigma_n$-action on $S_g$ is realized by lifting an order-$2$ subgroup of $\map(\Orb_H)$ under $p$. (Note that the mapping classes $\map(\Orb_H)$ are represented by homeomorphisms that preserve the set of cone points in $\Orb_H$ and their orders.) Then the Dehn-Nielsen-Baer theorem also applies to this setting (see \cite{zieschang}), that is, the canonical map $\map(\mathcal{O}_H)\to \text{Out}(\pi_1^{orb}(\mathcal{O}_H))$ is an isomorphism. Thus, a natural question is when does an involution lift under the branched cover $p$. In this section, we give a complete answer to this question up to conjugacy. 

Let $\phi_H:\Gamma \to A_n$ be the surface kernel map associated with the $H$-action on $S_g$ and $\overline{G}_*\in \mathrm{Out}(\Gamma)$ be the map induced by $\overline{G}$, where $\Gamma \approx \pi_1^{orb}(\mathcal{O}_H)$ and $H \approx A_n$. 

\begin{defn}
\label{defn:wlp}
Let $\D_a$ be an alternating data set (as in Definition~\ref{defn:alt_data_set}), $D$ be a cyclic data set of degree $2$, and $\Pi \in \Sigma_r$. Then the pair $(\D_a, (D,\Pi))$ is called a \textit{weak-liftable pair} if there exists an $H \in [\D_a]$ and an involution $\overline{G} \in \map(\Orb_H)$ satisfying the following conditions.
\begin{enumerate}[(i)] 
\item $\overline{G}$ lifts under the branched cover $S_g \to S_g/H$.
\item If the orbifold genus of $\Orb_H$ is $g_0$, then $D_{\overline{G}} = D$, where $D_{\overline{G}}$ represents the conjugacy class of $\overline{G}$ in $\map(S_{g_0})$.
\item $\overline{G}_*(\xi_i) = \xi_{\Pi(i)}$, for $1 \leq i \leq r$, where the $\xi_i$ are the elliptic generators of $\pi_1^{orb}(\Orb_H)$ (as in the presentation (\ref{eqn:eqn1})).
\end{enumerate}
\end{defn}
\noindent Note that it is implicit from Definition~\ref{defn:wlp} that $|\Pi| \in \{1,2\}$. Also, since $\Pi$ depends on $\overline{G}$, we will fix the notation $\wlp$ for a weak-liftable pair with the implicit assumption that $\overline{G}$ is indeed the representative of $D_{\overline{G}}$ that lifts under $S_g \to S_g/H$. 

Consider a finite-sheeted regular branched cover $p: S_g \to \Orb_H$, and its deck transformation group $Deck(p)$. Then the following liftability criterion due to Broughton (see  \cite[Theorem 3.2.]{broughton_normalizer}), is an algebraic analog of the Birman-Hilden theorem~\cite{birman1971,margalit-winarski}, which asserts that the sequence $$1 \to Deck(p) \to \mathrm{SMod}(S_g) \to \mathrm{LMod}(\Orb_H) \to 1$$
is exact. 

\begin{prop}\label{prop:main4}
Let $\D_a$ be an alternating data set and $D_{\overline{G}}$ be a cyclic data set of degree $2$.  Then $\wlp$ forms a weak-liftable pair if and only if there exists $H \in [\D_a]$ and a $\chi_\omega\in \mathrm{Aut}(A_n)$ such that $\phi_H \circ \overline{G}_{\ast} = \chi_\omega \circ \phi_H$. 
\end{prop}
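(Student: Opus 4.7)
The plan is to deduce the proposition as a direct reformulation of Broughton's algebraic liftability criterion cited in the paragraph preceding the statement. The orbifold Dehn--Nielsen--Baer theorem, recalled at the start of this section, identifies $\map(\Orb_H)$ with $\mathrm{Out}(\Gamma)$ where $\Gamma = \pi_1^{orb}(\Orb_H)$, so the passage $\overline{G} \leftrightarrow \overline{G}_{\ast}$ is a well-defined bijection. Under this identification, Broughton's criterion says that $\overline{G}$ lifts under the branched cover $p : S_g \to \Orb_H$ precisely when $\overline{G}_{\ast}$ preserves the normal surface subgroup $\ker(\phi_H) \cong \pi_1(S_g)$ of $\Gamma$.

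For the forward direction, suppose $\wlp$ is a weak-liftable pair. Condition (i) of Definition~\ref{defn:wlp} asserts that $\overline{G}$ lifts under $p$, so by Broughton's criterion $\overline{G}_{\ast}(\ker \phi_H) = \ker \phi_H$. Hence $\overline{G}_{\ast}$ descends to an automorphism $\chi_\omega$ of $\Gamma/\ker(\phi_H) \cong A_n$, and by construction $\phi_H \circ \overline{G}_{\ast} = \chi_\omega \circ \phi_H$.

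For the backward direction, assume there exist $H \in [\D_a]$ and $\chi_\omega \in \mathrm{Aut}(A_n)$ with $\phi_H \circ \overline{G}_{\ast} = \chi_\omega \circ \phi_H$. This identity immediately forces $\overline{G}_{\ast}(\ker \phi_H) \subseteq \ker \phi_H$, and since $\overline{G}_{\ast}$ is an automorphism of the finitely generated group $\Gamma$ and both sides have the same (finite) index, equality holds. Broughton's criterion then gives that $\overline{G}$ lifts under $p$, establishing condition (i) of Definition~\ref{defn:wlp}. Conditions (ii) and (iii) are built directly into the pair $\wlp$ via the data $D_{\overline{G}}$ and $\Pi_{\overline{G}}$, so the pair is indeed weak-liftable.

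The only bookkeeping subtlety, and the closest thing to an obstacle, is that $\overline{G}_{\ast}$ is only defined up to an inner automorphism of $\Gamma$, and correspondingly $\chi_\omega$ is only determined up to an inner automorphism of $A_n$; however, both the subgroup $\ker(\phi_H)$ and the validity of the identity $\phi_H \circ \overline{G}_{\ast} = \chi_\omega \circ \phi_H$ are invariant under replacing $\overline{G}_{\ast}$ by a conjugate in $\mathrm{Aut}(\Gamma)$ (together with the corresponding adjustment of $\chi_\omega$), so this ambiguity is harmless and no additional work is required.
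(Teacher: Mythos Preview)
Your proposal is correct and matches the paper's approach: the paper does not supply a separate proof of this proposition, instead presenting it as an immediate restatement of Broughton's liftability criterion (cited just before the statement), and your argument is exactly the routine unpacking of that criterion in this context. There is nothing further to add.
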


\noindent The next proposition follows directly from Theorem \ref{thm:zieschang} and Proposition \ref{prop:main4}.
\begin{prop}\label{prop:admissible_permutation}
Let $\wlp$ form a weak-liftable pair. Then
\begin{enumerate}[(i)]
\item $\sigma_i$ is conjugate to $\sigma_{\varPi_{\overline{G}}(i)}$ in $A_n$, if the conjugacy classes of $\sigma_i$ in $\Sigma_n$ and $A_n$ are equal and
\item $\sigma_i$ is conjugate to $\sigma_{\varPi_{\overline{G}}(i)}$ in $\Sigma_n$, but not in $A_n$, if the conjugacy classes of $\sigma_i$ in $\Sigma_n$ and $A_n$ are distinct.
\end{enumerate}
\end{prop}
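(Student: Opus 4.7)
The plan is to combine the two cited results by evaluating the identity $\phi_H\circ\overline{G}_*=\chi_\omega\circ\phi_H$ on each elliptic generator $\xi_i$ of $\Gamma:=\pi_1^{orb}(\Orb_H)$. First I would invoke Proposition \ref{prop:main4} to fix an $H\in[\D_a]$ together with a $\chi_\omega\in\mathrm{Aut}(A_n)$ satisfying that identity. Since $n\geq 5$ and the case $n=6$ can be treated separately, Theorem \ref{lem:inner} lets me realise $\chi_\omega$ as conjugation by a specific element $\omega\in\Sigma_n$; that is, $\chi_\omega(\sigma)=\omega\sigma\omega^{-1}$ for every $\sigma\in A_n$.

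Next, I would apply Theorem \ref{thm:zieschang} to the automorphism $\overline{G}_*\in\mathrm{Aut}(\Gamma)$ to obtain $\overline{G}_*(\xi_i)=\gamma_i\,\xi_{\Pi_{\overline{G}}(i)}\,\gamma_i^{-1}$ for suitable $\gamma_i\in\Gamma$; the permutation produced by Zieschang is precisely the $\Pi_{\overline{G}}$ of Definition \ref{defn:wlp}(iii). Pushing this through $\phi_H$ and using the commuting square then yields
\[\omega\,\sigma_i\,\omega^{-1}\;=\;\chi_\omega(\sigma_i)\;=\;\phi_H\bigl(\overline{G}_*(\xi_i)\bigr)\;=\;\phi_H(\gamma_i)\,\sigma_{\Pi_{\overline{G}}(i)}\,\phi_H(\gamma_i)^{-1}\]
for every $i$, so that $\sigma_i$ and $\sigma_{\Pi_{\overline{G}}(i)}$ always lie in a single $\Sigma_n$-conjugacy class. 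Part (i) is then immediate: when the $A_n$- and $\Sigma_n$-classes of $\sigma_i$ coincide, this $\Sigma_n$-conjugacy is automatically an $A_n$-conjugacy.

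Part (ii) is where the real content is, and it is the step I expect to be the main obstacle. In the splitting case the cycle type of $\sigma_i$ is a disjoint union of distinct odd-length cycles whose lengths sum to at least $n-1$, so $C_{\Sigma_n}(\sigma_i)$ is generated by those odd cycles (all even permutations) and is contained in $A_n$. Consequently, whether $\omega\sigma_i\omega^{-1}$ stays in the $A_n$-class of $\sigma_i$ or is swapped to the conjugate partner class is determined solely by the coset $\omega A_n$, and the non-$A_n$-conjugacy asserted in (ii) reduces to showing $\omega\notin A_n$. This last point must be read off from the lifting setup: the lift of $\overline{G}$ together with $H\cong A_n$ generates a $\Sigma_n$-extension of $A_n$ rather than a split $A_n\times\Z_2$, so the outer class of $\chi_\omega$ in $\mathrm{Out}(A_n)$ is nontrivial and $\omega$ must be an odd permutation. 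Once this is secured, $\omega\sigma_i\omega^{-1}$, and hence $\sigma_{\Pi_{\overline{G}}(i)}$, lies in the $A_n$-class opposite to that of $\sigma_i$, completing the proof.
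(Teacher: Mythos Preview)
Your overall strategy---evaluate the identity of Proposition~\ref{prop:main4} on the elliptic generators, feed in Theorem~\ref{thm:zieschang}, and realise $\chi_\omega$ via Theorem~\ref{lem:inner}---is exactly what the paper has in mind; the paper offers no argument beyond citing those two results. Your derivation of the $\Sigma_n$-conjugacy of $\sigma_i$ and $\sigma_{\Pi_{\overline{G}}(i)}$, and hence part~(i), is correct.

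The gap is precisely where you flagged it, in part~(ii). You correctly reduce the ``not in $A_n$'' clause to showing that $\omega$ is odd, and then justify this by asserting that the lift of $\overline{G}$ together with $H$ generates $\Sigma_n$ rather than $A_n\times\Z_2$. But that is the defining property of a \emph{WLS-pair}, not of a weak-liftable pair. Definition~\ref{defn:wlp} only requires that $\overline{G}$ lift; by Remark~\ref{rem:extension} the resulting index-two extension of $H\cong A_n$ can equally well be $A_n\times\Z_2$. In that case conjugation by any lift is inner on $A_n$, so $\chi_\omega$ has trivial outer class and one may take $\omega\in A_n$. Your displayed identity then shows that $\sigma_i$ and $\sigma_{\Pi_{\overline{G}}(i)}$ are $A_n$-conjugate even when the $\Sigma_n$-class of $\sigma_i$ splits, directly contradicting the ``not in $A_n$'' conclusion. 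Thus the step ``the outer class of $\chi_\omega$ is nontrivial'' is not available from the stated hypothesis, and without it part~(ii) cannot be obtained as written. Note that the application in Corollary~\ref{cor:self_normalizing} uses only the $\Sigma_n$-conjugacy, which your argument already establishes; the sharper non-$A_n$-conjugacy genuinely requires the WLS hypothesis (equivalently, that $\chi_\omega$ be outer).
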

\noindent As a consequence of Proposition \ref{prop:admissible_permutation}, we have the following.
\begin{cor}\label{cor:self_normalizing}
Let $\D_a = (n,0; [\sigma_i, m_i; k_{1i},\dots , k_{l_ii}]_{i=1}^{r})$ such that $\sigma_i$ is not conjugate to $\sigma_j$ in $\Sigma_n$ for $1\leq i,j\leq r$. Then any $H \in [\D_a]$ is self-normalizing in $\map(S_g)$. In particular, any $A_n$-action whose quotient orbifold has signature $(0;m_1,m_2,m_3)$ such that $m_1\neq m_2 \neq m_3$ does not extend either to a $\Sigma_n$-action or a $A_n \times \Z_2$-action.
\end{cor}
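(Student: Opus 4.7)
The plan is to work through the Broughton-type exact sequence
\[
1 \to H \to N_{\mathrm{Mod}(S_g)}(H) \to \mathrm{LMod}(\Orb_H) \to 1
\]
and to show, under the non-conjugacy hypothesis, that $\mathrm{LMod}(\Orb_H)$ contains no nontrivial finite-order element; combined with the Nielsen realization theorem, this yields that $H$ admits no proper finite over-group in $\mathrm{Mod}(S_g)$. The ``in particular'' consequence then follows immediately, since any $\Sigma_n$- or $A_n\times\Z_2$-extension of $H$ would furnish a $\Z_2$-subgroup of $\mathrm{LMod}(\Orb_H)$.

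Concretely, suppose $\overline{G}\in\mathrm{LMod}(\Orb_H)$ is nontrivial and of finite order. First I would invoke the natural extension of Proposition \ref{prop:main4} to arbitrary orders (the proof in the excerpt is stated for degree two but adapts verbatim): there exists $\chi\in\mathrm{Aut}(A_n)$ with $\phi_H\circ\overline{G}_{\ast}=\chi\circ\phi_H$. Applying Theorem \ref{thm:zieschang} to $\overline{G}_{\ast}$ produces a permutation $\Pi\in\Sigma_r$ such that $\overline{G}_{\ast}(\xi_i)$ is $\Gamma$-conjugate to $\xi_{\Pi(i)}$, hence $\chi(\sigma_i)$ is $A_n$-conjugate to $\sigma_{\Pi(i)}$. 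Since $n\neq 6$, Theorem \ref{lem:inner} expresses $\chi$ as the restriction of an inner automorphism of $\Sigma_n$, so $\chi(\sigma_i)$ is $\Sigma_n$-conjugate to $\sigma_i$. Chaining these conjugacies yields $\sigma_i$ conjugate to $\sigma_{\Pi(i)}$ in $\Sigma_n$ for each $i$, and the non-conjugacy hypothesis on the $\sigma_i$ forces $\Pi=\mathrm{id}$.

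Thus $\overline{G}$ is a nontrivial finite-order mapping class of the genus-zero orbifold $\Orb_H$ that fixes each of its $r$ cone points. By the Nielsen realization theorem applied to the orbifold mapping class group, $\overline{G}$ is isotopic to an orientation-preserving isometry of some hyperbolic structure on $\Orb_H$; since the underlying topological surface is $S^2$, this isometry is a rotation and therefore fixes exactly two points. On the other hand, hyperbolicity of $\Orb_H$ with $g_0=0$ forces $r\geq 3$, contradicting the requirement that all $r$ cone points be fixed. Hence no such $\overline{G}$ exists, establishing self-normalization of $H$ against finite over-groups, and in particular ruling out $\Sigma_n$- and $A_n\times\Z_2$-extensions in the three-cone-point case.

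The principal obstacle will be writing down the extension of Proposition \ref{prop:main4} to arbitrary finite-order $\overline{G}$ cleanly, and handling the $n=6$ exception where $\mathrm{Aut}(A_6)$ properly contains $\Sigma_6$ (one resolves this separately using the explicit description of the extra outer class, or by verifying the small-genus cases directly). The remaining inputs -- Nielsen realization for orbifolds and the fact that a nontrivial finite-order orientation-preserving self-homeomorphism of $S^2$ is a rotation with two fixed points -- are standard and require no new ideas.
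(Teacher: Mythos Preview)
Your approach is essentially the paper's own: the corollary is stated there as an immediate consequence of Proposition~\ref{prop:admissible_permutation}, and your argument is precisely the natural unpacking of that proposition through Proposition~\ref{prop:main4}, Theorem~\ref{thm:zieschang}, and Theorem~\ref{lem:inner}. The only difference is that Proposition~\ref{prop:admissible_permutation} is formulated for involutions, while you treat finite-order $\overline{G}$ of arbitrary order; the generalization is routine but lets you rule out all prime-index normal extensions at once rather than only index-two ones.

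There is, however, a gap between what your argument delivers and what the first sentence of the corollary asserts. You show that $\mathrm{LMod}(\Orb_H)$ is torsion-free and conclude (as you yourself note) only that $H$ has no proper finite over-group; but ``self-normalizing'' means $N_{\map(S_g)}(H)=H$, i.e.\ $\mathrm{LMod}(\Orb_H)=1$. When $r=3$ these coincide: the orbifold mapping class group of a sphere with three cone points is finite (it injects into $\Sigma_3$), so torsion-free forces trivial, and your proof closes. When $r\geq 4$, however, $\map(\Orb_H)$ contains the pure mapping class group $\mathrm{PMod}(S_{0,r})$, which is infinite, and $\mathrm{LMod}(\Orb_H)$ has finite index in $\map(\Orb_H)$ (it is the stabilizer of $\ker\phi_H$ under the action on the finite set $\mathrm{Epi}(\Gamma,A_n)/\mathrm{Aut}(A_n)$), hence is infinite as well. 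So for $r\geq 4$ your argument does not, and cannot, yield $N(H)=H$. The paper's implicit proof via Proposition~\ref{prop:admissible_permutation} shares this limitation; the ``in particular'' clause---the $r=3$ case advertised in the introduction---is what both arguments actually establish, and your proof of that part is complete.
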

\begin{defn}
A weak-liftable pair  $\wlp$ is called a \textit{weak-liftable $\Sigma_n$-pair} (abbreviated as \textit{WLS-pair}) if there exists an $H \in [\D_a]$ such that the following conditions hold. 
\begin{enumerate}[(i)] 
\item $\overline{G}$ that lifts to a $G$ under the alternating cover $S_g \to S_g/H$ and
\item $\langle H\cup\{G\} \rangle \cong \Sigma_n$.
\end{enumerate}
\end{defn}
\begin{defn}
\label{defn:eq_wls_pairs}
Two WLS-pairs $\wlp$ and $(\D_a',(D_{\overline{G}'}, \Pi_{\overline{G}'}))$ are said to be equivalent if the following are satisfied:
\begin{enumerate}[(i)]
\item $\D_a$ and $\D_a'$ are weakly conjugate.
\item $D_{\overline{G}}=D_{\overline{G}'}$.
\item $\Pi_{\overline{G}'}=\pi^{-1} \circ \Pi_{\overline{G}} \circ \pi$,  where $\pi$ is as in Definition \ref{defn:eq_data_sets}.
\end{enumerate}
\end{defn}
\begin{prop}\label{prop:wls}There exists a well-defined surjective map:
\begin{center}
$\left\{\parbox{38mm}{\raggedright Equivalence classes of symmetric data sets}\right\}\xrightarrow{\makebox[1.5cm]{$\Psi$}}
\left\{\parbox{34mm}{\raggedright Equivalence classes\\ of WLS-pairs} \right\}.$
\end{center}
\end{prop}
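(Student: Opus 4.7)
The plan is to define $\Psi$ by restriction to the index-$2$ alternating subgroup. Given a symmetric data set $\D_s$, Proposition~\ref{prop:main2} realizes it as a $\Sigma_n$-action $K \leq \homeo(S_g)$, and I take $H$ to be the unique index-$2$ subgroup of $K$, well-defined since $A_n$ is characteristic in $\Sigma_n$. By Proposition~\ref{prop:main1}, $H$ determines an alternating data set $\D_a$. The branched cover $S_g \to \Orb_K$ factors as $S_g \to \Orb_H \to \Orb_K$, where the second map is a $\Z_2$-cover whose nontrivial deck transformation is an involution $\overline{G} \in \map(\Orb_H)$. Any $G \in K \setminus H$ lifts $\overline{G}$ under the alternating cover $S_g \to \Orb_H$, and $\langle H \cup \{G\} \rangle = K \cong \Sigma_n$, so $(\D_a, (D_{\overline{G}}, \Pi_{\overline{G}}))$ is a WLS-pair, where $D_{\overline{G}}$ is the cyclic data set of $\overline{G}$ and $\Pi_{\overline{G}}$ is the permutation of elliptic generators of $\pi_1^{orb}(\Orb_H)$ induced by $\overline{G}_{\ast}$. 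I then set $\Psi([\D_s]) := [(\D_a, (D_{\overline{G}}, \Pi_{\overline{G}}))]$.

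For well-definedness, suppose $\D_s$ and $\D_s'$ are equivalent. Then the associated $\Sigma_n$-actions $K, K'$ on $S_g$ are weakly conjugate via some orbifold isomorphism $\psi_K$ and group isomorphism $\chi_K$. Since $A_n$ is characteristic, these restrict to a weak conjugacy between the alternating subgroups $H, H'$, yielding equivalent alternating data sets $\D_a, \D_a'$ via a permutation $\pi$ on the elliptic generators of $\pi_1^{orb}(\Orb_H)$ supplied by Theorem~\ref{thm:zieschang} applied to the restriction $\psi_H$. Applying the algebraic lifting criterion of Proposition~\ref{prop:main4} to compare the outer automorphisms $\overline{G}_{\ast}$ and $\overline{G}'_{\ast}$ through $\psi_H$ then yields both $D_{\overline{G}} = D_{\overline{G}'}$ (via the Nielsen--Kerckhoff correspondence applied to conjugate involutions in $\map(\Orb_H)$ and $\map(\Orb_{H'})$) and $\Pi_{\overline{G}'} = \pi^{-1} \circ \Pi_{\overline{G}} \circ \pi$, which is precisely the equivalence in Definition~\ref{defn:eq_wls_pairs}.

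Surjectivity is immediate: given a WLS-pair $(\D_a, (D_{\overline{G}}, \Pi_{\overline{G}}))$, its definition supplies $H \in [\D_a]$ and $G$ with $K := \langle H \cup \{G\} \rangle \cong \Sigma_n$, so Proposition~\ref{prop:main2} associates a symmetric data set $\D_s$ to $K$, and unwinding the construction shows that $\Psi([\D_s])$ equals the given WLS-pair class (with the original $H$ recovered as the unique index-$2$ subgroup of $K$, and $G$ furnishing the required lift of $\overline{G}$). The principal obstacle is the permutation bookkeeping in the well-definedness argument: restricting a $\Sigma_n$-action to its $A_n$-subgroup replaces the quotient $\Orb_K$ by the $\Z_2$-cover $\Orb_H$, whose cone points arise from those of $\Orb_K$ either by splitting (when the local isotropy lies in $A_n$) or by descending with halved order (otherwise), so the permutation underlying the equivalence of $\D_s$ and $\D_s'$ does not directly equal $\pi$; I will need to show carefully that the induced permutation on the elliptic generators of $\pi_1^{orb}(\Orb_H)$ both witnesses the equivalence $\D_a \sim \D_a'$ and simultaneously conjugates $\Pi_{\overline{G}}$ to $\Pi_{\overline{G}'}$, and that the ambiguity in choosing $G \in K \setminus H$ and the inner-automorphism slack inherent to $\overline{G}_{\ast} \in \mathrm{Out}(\Gamma)$ do not alter the resulting equivalence class of WLS-pairs.
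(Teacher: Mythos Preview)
Your proposal is correct and follows essentially the same restriction-to-$A_n$ construction as the paper. The only notable difference is emphasis: the paper's proof asserts well-definedness and surjectivity rather briefly (essentially citing Definition~\ref{defn:eq_wls_pairs}) and instead spends most of its effort on an \emph{explicit computation} of the output, describing how each $\sigma_i$ in $\D_s$ contributes to $\D_a$ (odd permutations contribute a single class of $\sigma_i^2$, even ones contribute two classes, with the splitting case handled separately) and giving the closed form $D_{\overline{G}}=(2,g_0;(1,2)^{[\ell]})$ where $\ell$ is the number of odd $\sigma_i$. Your treatment inverts this balance, giving a more careful abstract argument for well-definedness via restricted weak conjugacy and Theorem~\ref{thm:zieschang}, while omitting the explicit formula; both are valid, and your version arguably justifies the well-definedness claim more thoroughly than the paper does.
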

\begin{proof}
Let $\D_s=(n,g_0; [\sigma_i, m_i; k_{1i},\dots , k_{l_ii}]_{i=1}^{r})$ be a symmetric data set of genus $g$. Let $H<\homeo(S_g)$ be the corresponding $\Sigma_n$-action given by Proposition \ref{prop:main2}. Since there exists a unique $H'\triangleleft H$ such that $H'\cong A_n$. Thus, we have an $A_n$-action on $S_g$ and a $\mathbb{Z}_2$-action on $S_g/H'$ induced by $H/H'~(=\langle \overline{G} \rangle)$, as shown in the diagram below:
\begin{center}
     \begin{tikzcd}  
S_g
\arrow[rr, bend right, "/H~(\cong\Sigma_n)" ']
\arrow[r, "{/H'~(\cong A_n)}"] &[3em] \mathcal{O}_{H'} \arrow[r, "/\langle \overline{G} \rangle~(\cong\Z_2)"] &[3em] \mathcal{O}_{H}
     \end{tikzcd}
     \end{center}
Hence, by Proposition \ref{prop:main1}, we obtain an alternating data set $\D_a$ corresponding to $H'$-action on $S_g$.

Consider $\Pi_{\overline{G}} \in \Sigma_r$ be the permutation induced by $\overline{G}$ and $D_{\overline{G}}$ denote the cyclic data set of  involution induced by $\overline{G}$ on $S_g/A_n \approx S_{g_0'}$. Then, it follows from Definition~\ref{defn:eq_wls_pairs} that the mapping $\psi([\D_s]):= [\wlp],$ is well-defined surjection. We will now compute $\D_a$ and $D_{\overline{G}}$ explicitly. 

Let $\Gamma$ be as in Equation (\ref{eqn:eqn1}). Let $\phi_H:\Gamma\to\Sigma_n$ be a surface kernel map associated $\D_s$ that satisfies $\phi(\xi_i)=\sigma_i$ for $1\leq i\leq r$. Then taking $\Gamma'=\phi_H^{-1}(A_n)$ and $\Gamma_0=\phi_H^{-1}(1)$, we see that $\Gamma,\Gamma_0 \triangleleft \Gamma$ and $\Gamma/\Gamma'\cong (\Gamma/\Gamma_0)/(\Gamma'/\Gamma_0)\cong \Sigma_n/A_n.$ Hence, $\Gamma'\triangleleft\Gamma$ is a unique index two subgroup, and the corresponding surface kernel map for the $H'$-action is given by $\phi_{H'}:\Gamma'\to A_n$.

It is well known that if $\gamma$ is an elliptic generator of $\Gamma'$, then $|\gamma|= c_i/t_i$, where $c_i=|\xi_i|$ and $t_i=\circ(\xi_i\Gamma')$ in $\Gamma/\Gamma'$, for some unique $i$. Furthermore, the conjugacy class of $\gamma$ in $\Gamma$ splits into $[\Gamma:\Gamma']/t_i$ many conjugacy classes in $\Gamma'$. We note that $c_i=|\xi_i|= |\sigma_i|=m_i$, $t_i=|\xi_i\Gamma'|$ in $\Gamma/\Gamma'$, $t_i=|\sigma_iA_n|$ in $\Sigma_n/A_n$, and $[\Gamma:\Gamma']=2.$ Thus, every odd permutation $\sigma_i$ in $\D_s$ contributes an even permutation in $\D_a$ that lies in the conjugacy class of $\sigma_i^2$ in $A_n$. However, when $\sigma_i^2=1$, there is no such contribution. Moreover, when the conjugacy classes of $\sigma_i$ in $\Sigma_n$ and $A_n$ and equal, every even permutation $\sigma_i$ in $\D_s$ contributes two even permutations in $\D_a$ that lie in the conjugacy class of $\sigma_i$ in $\Sigma_n$ (or $A_n$).  But, when the conjugacy class of $\sigma_i$ (in $\Sigma_n$) disintegrates in $A_n$, it contributes two non-conjugate permutations (of $A_n$) in $\D_a$ that lie in the conjugacy class of $\sigma_i$ in $\Sigma_n$. Thus, $\D_a$ is determined up to equivalence as in Definition \ref{defn:eq_data_sets}. Finally, by a straightforward computation we obtain that $D_{\overline{G}}=(2,g_0;(1,2),\overset{\ell}{\cdots},(1,2)),$ where $\ell$ is the number of odd permutations in $\D_s$ and $g_0$ is determined by the Riemann-Hurwitz equation $$\frac{2-2g}{2}=2-2g_0-\frac{\ell}{2}.$$
\end{proof}
\noindent An immediate consequence of Proposition \ref{prop:wls} is the following.
\begin{theorem}\label{thm:wls}
 $\wlp$ forms a WLS-pair if and only if there exists a symmetric data set $\D_s$ such that $\Psi([\D_s])= [\wlp]$.
\end{theorem}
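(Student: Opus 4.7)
The plan is to deduce Theorem \ref{thm:wls} directly from Proposition \ref{prop:wls}, since that proposition already does the substantive work of constructing the map $\Psi$ and proving it is a well-defined surjection onto the set of equivalence classes of WLS-pairs. The theorem is essentially a reformulation of this surjectivity, together with the fact that the codomain of $\Psi$ consists (by definition) of equivalence classes of WLS-pairs, so any element in the image automatically is one.

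For the $(\Leftarrow)$ direction, suppose there exists a symmetric data set $\D_s$ with $\Psi([\D_s])=[\wlp]$. Since $\Psi$ takes values in equivalence classes of WLS-pairs, the class $[\wlp]$ lies in this set; unpacking the equivalence relation of Definition \ref{defn:eq_wls_pairs}, any representative of a class of WLS-pairs is itself a WLS-pair, so $\wlp$ is a WLS-pair. This direction requires no further argument beyond citing Proposition \ref{prop:wls}.

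For the $(\Rightarrow)$ direction, assume $\wlp$ is a WLS-pair. Then by definition, fixing a representative $H \in [\D_a]$ of the underlying $A_n$-action, the involution $\overline{G} \in \map(\Orb_H)$ lifts under the branched cover $S_g \to S_g/H$ to some $G \in \homeo(S_g)$ with $K := \langle H \cup \{G\}\rangle \cong \Sigma_n$. Applying Proposition \ref{prop:main2} to the $\Sigma_n$-action $K$ produces an equivalence class of symmetric data sets $[\D_s]$ encoding $K$. The construction used in the proof of Proposition \ref{prop:wls} — extracting from a $\Sigma_n$-action its unique index-two $A_n$-subgroup and the induced $\Z_2$-action on the quotient orbifold, then reading off the permutation $\Pi_{\overline{G}}$ from the action of $\overline{G}_*$ on elliptic generators — shows that $\Psi([\D_s])$ is represented precisely by the triple coming from $(H, \overline{G})$. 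By Definition \ref{defn:eq_wls_pairs}, this triple is equivalent to $\wlp$, hence $\Psi([\D_s]) = [\wlp]$.

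The main (and only) potential obstacle is verifying that the equivalence relation on WLS-pairs (Definition \ref{defn:eq_wls_pairs}) is coarse enough to absorb the choice of representative $H \in [\D_a]$ and the choice of lift $G$ of $\overline{G}$ in the $(\Rightarrow)$ direction; equivalently, one must check that two $\Sigma_n$-actions with the same symmetric data set (up to equivalence) produce WLS-pairs that are equivalent in the sense of Definition \ref{defn:eq_wls_pairs}. But this is exactly the well-definedness portion of $\Psi$ already established in Proposition \ref{prop:wls}, so the theorem follows.
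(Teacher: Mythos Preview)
Your proposal is correct and matches the paper's approach exactly: the paper states Theorem~\ref{thm:wls} as ``an immediate consequence of Proposition~\ref{prop:wls}'' with no further proof, and your argument simply unpacks that immediacy --- the $(\Leftarrow)$ direction is the fact that the codomain of $\Psi$ consists of WLS-pair classes, and the $(\Rightarrow)$ direction is the surjectivity of $\Psi$.
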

\begin{cor}\label{cor:cor3}
There exists a free $A_n$-action on $S_g$ if and only if there exists $k\in \mathbb{N}$ such that $g= 1+k\cdot\frac{n!}{2}$. Furthermore:
\begin{enumerate}[(i)]
\item A free $A_n$-action on $S_g$ extends to a free $\Sigma_n$-action on $S_g$ if and only if $k$ is even, and 
\item A free $A_n$-action on $S_g$ extends to a non-free $\Sigma_n$-action on $S_g$ if $k \geq 3$ and $k$ is odd.
\end{enumerate}
\end{cor}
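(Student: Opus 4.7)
The plan is to encode both halves of the corollary via the alternating and symmetric data set calculus of Sections 2--3, invoking Ore's theorem (Theorem \ref{thm:Ore}) to close up each commutator relation needed for a surface kernel epimorphism.

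For the existence claim, a free $A_n$-action on $S_g$ corresponds to an alternating data set of the form $\D_a=(n,g_0;-)$. Clauses (iii)(a) and (iv)(a) of Definition \ref{defn:alt_data_set} force $g_0\geq 2$ (empty generators cannot span $A_n$, and two commuting elements cannot generate the non-abelian $A_n$), and clause (i) then yields $g=1+(g_0-1)\cdot n!/2$, so $k:=g_0-1\geq 1$. To realize this $\D_a$ by an actual free action, I follow the template of Proposition \ref{prop:main1}: define $\phi:\pi_1(S_{k+1})\to A_n$ by sending $(\alpha_1,\beta_1)\mapsto (\sigma_a,\tau_a)$, choosing $\rho_1,\rho_2\in A_n$ with $[\rho_1,\rho_2]=[\sigma_a,\tau_a]^{-1}$ via Ore's theorem, setting $(\alpha_2,\beta_2)\mapsto(\rho_1,\rho_2)$, and killing the remaining hyperbolic generators; surjectivity is immediate from $\langle\sigma_a,\tau_a\rangle=A_n$.

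For the forward direction of (i), the same Riemann--Hurwitz analysis applied to $\Sigma_n$ shows that a free $\Sigma_n$-action on $S_g$ exists iff $g=1+k'\cdot n!$ for some $k'\geq 1$, iff $k=2k'$ is even. For the converse, given $k=2k'$ I mimic the construction above with $(\sigma_s,\tau_s)$ in place of $(\sigma_a,\tau_a)$: the commutator $[\sigma_s,\tau_s]$ lives in the commutator subgroup $A_n$, so Ore's theorem still supplies the required $(\rho_1,\rho_2)$. The resulting free $\Sigma_n$-action restricts to a free $A_n$-action on $S_g$ that extends, by construction, to the chosen $\Sigma_n$-action.

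For (ii), assume $k\geq 3$ is odd, so $2k-4\geq 2$ is a positive even integer, and consider the candidate symmetric data set
\[ \D_s:=\bigl(n,\,2;\,[(1\ 2),\,2;\,2]^{[2k-4]}\bigr). \]
Define $\phi:\Gamma\to\Sigma_n$ by $\phi(\xi_i)=(1\ 2)$ for $1\leq i\leq 2k-4$, $\phi(\alpha_1,\beta_1)=(\sigma_s,\tau_s)$, and $\phi(\alpha_2,\beta_2)=(\rho_1,\rho_2)$ where Ore's theorem furnishes $\rho_1,\rho_2\in A_n$ with $[\rho_1,\rho_2]=[\sigma_s,\tau_s]^{-1}$. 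Since $(1\ 2)^{2k-4}=1$, the long defining relation of $\Gamma$ is respected; $\phi$ is order-preserving on torsion and surjective because $\langle\sigma_s,\tau_s\rangle=\Sigma_n$, and a direct Riemann--Hurwitz check places the action on the intended $S_g$. Invoking the explicit computation in the proof of Proposition \ref{prop:wls}, every $\sigma_i=(1\ 2)$ is an odd involution with $\sigma_i^2=1$ and hence contributes nothing to the associated alternating data set, while $\D_s$ contains no even permutations; thus the restricted $A_n$-action has data $\D_a=(n,\,k+1;\,-)$ and is free, and since $\D_s$ has cone points, the $\Sigma_n$-extension is non-free. The main technical obstacle is precisely the parity arithmetic above: the multiplicity $2k-4$ must be even (so that $\prod_i\phi(\xi_i)=1$ and the residual commutator is absorbed by Ore) and also strictly positive (to guarantee non-freeness of the $\Sigma_n$-extension), which is exactly the content of the hypothesis $k\geq 3$ odd.
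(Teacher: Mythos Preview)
Your proof is correct. The main departure from the paper is that you work directly with surface kernel epimorphisms and Ore's theorem rather than routing everything through the WLS-pair machinery (Theorem~\ref{thm:wls}). For the forward direction of (i), the paper observes via Theorem~\ref{thm:wls} that the induced $\Z_2$-action on $S_{g_0}$ must be free, forcing $g_0$ odd; you instead compare Riemann--Hurwitz for $A_n$ and $\Sigma_n$ directly, which is equally short. For (ii), the paper uses the symmetric data set $(n,(k+1)/2;[(1\ 2),2;2]^{[2]})$ (two branch points over a genus-$(k+1)/2$ base), while you use $(n,2;[(1\ 2),2;2]^{[2k-4]})$ ($2k-4$ branch points over a genus-$2$ base). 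Both are legitimate: yours keeps the base genus fixed at $2$ so that condition~(v) of Definition~\ref{defn:sym_data_set} is the only constraint to check, at the cost of a larger number of cone points; the paper's choice keeps the number of cone points minimal. Either way, the restriction to $A_n$ is free by the computation in the proof of Proposition~\ref{prop:wls}, and since the free alternating data set $(n,k+1;-)$ is unique up to equivalence, the extended action is indeed an extension of the given weak conjugacy class.
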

\begin{proof}
Note that any free $A_n$-action on $S_g$ is represented by the alternating data set
$\D_a=(n, g_0; -)$ for some $g_0\geq 2$, satisfying the Riemann-Hurwitz equation $\frac{2-2g}{n!/2}=2-2g_0$, which implies that $g= 1+ \frac{n!}{2}(g_0-1)$, i.e. $ g= 1+k\cdot\frac{n!}{2}$ for some positive integer $k:=g_0-1$.

Further, assume that there exists a free $A_n$-action on $S_g$, which can be extended to a free $\Sigma_n$-action.
Then there exists a free $\Z_2$-action on $S_{g_0}$ by Theorem ~\ref{thm:wls}. Hence, $g_0$ must be odd, i.e., $k$ is even. Conversely, if $k$ is even then any free alternating actions with associated $\D_a=(n, g_0; -)$ can be extended to a free $\Sigma_n$-action as $\Psi([\D_s])=[\wlp]$ where $\D_s=(n, \tilde{g_0}; -)$ with $\tilde{g_0}=1+k/2$, $D_{\overline{G}}=(2,\tilde{g_0};)$, and $\Pi_{\overline{G}}= id$.

Moreover, for odd $k\geq 3$, one can check that $(\D_a,(D_{\overline{G}'},\Pi_{\overline{G}'}))$ forms a WLS-pair with $D_{\overline{G}'}=(2,\tilde{g_0};(1,2),(1,2))$ and $\Pi_{\overline{G}'}= id$.
\end{proof}
\noindent Note that for $k=1$ in Corollary \ref{cor:cor3}, it boils down to whether $(1;2,2)$ is an actual signature of $\Sigma_n$ or not, which is unknown to the authors! Now we will show that any involution on $S_{g_0}$ having quotient orbifold genus $\geq2$, weakly lifts under a free alternating cover: $S_g \overset{/A_n}{\longrightarrow} S_{g_0}$.
\begin{cor}
Let $\D_a=(n,g_0;-)$ and $D_{\overline{G}}=(2,\overline{g}_0;(1,2),\overset{\ell}{\cdots},(1,2))$ be of genus $g_0$ such that $\Pi_{\overline{G}}=id$ and $\overline{g}_0 \geq 2$. Then $\wlp$ forms a WLS-pair, and hence a weak-liftable pair.
\end{cor}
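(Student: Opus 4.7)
My plan is to construct an explicit symmetric data set $\D_s$ satisfying $\Psi([\D_s]) = [\wlp]$ and then invoke Theorem~\ref{thm:wls}. Applying the Riemann--Hurwitz formula to $\overline{G}$ acting on $S_{g_0}$ gives
\[
\frac{2 - 2g_0}{2} \;=\; 2 - 2\overline{g}_0 - \frac{\ell}{2},
\]
so $\ell = 2(g_0 - 2\overline{g}_0 + 1)$ is automatically even. Guided by the recipe in the proof of Proposition~\ref{prop:wls}---an odd permutation $\sigma_i$ with $\sigma_i^2 = 1$ contributes no cone point to the associated alternating data set---I propose the candidate
\[
\D_s \;:=\; \bigl(n,\, \overline{g}_0;\, [(1\ 2),\, 2;\, 2]^{[\ell]}\bigr).
\]

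The first step is to verify that $\D_s$ is a valid symmetric data set in the sense of Definition~\ref{defn:sym_data_set}. Conditions (i) and (ii) are routine, and condition (v) holds because $\prod_{i=1}^{\ell}(1\ 2) = 1$ is even (using that $\ell$ is even). To produce the surface kernel epimorphism $\phi : \Gamma \to \Sigma_n$ demanded by Theorem~\ref{thm:riemann}, I set $\phi(\xi_i) = (1\ 2)$ for $1 \leq i \leq \ell$, together with $\phi(\alpha_1) = \sigma_s$, $\phi(\alpha_2) = \tau_s$, $\phi(\beta_j) = 1$ for all $j$, and $\phi(\alpha_k) = 1$ for $k \geq 3$. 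Then every commutator $[\phi(\alpha_j), \phi(\beta_j)]$ is trivial, so the long defining relation in $\Gamma$ reduces to $\prod \sigma_i = 1$, which indeed holds; meanwhile $\sigma_s, \tau_s \in \phi(\Gamma)$ gives surjectivity, and $\phi$ manifestly preserves the orders of the elliptic torsion. The hypothesis $\overline{g}_0 \geq 2$ is used essentially here to accommodate both $\sigma_s$ and $\tau_s$ on independent handles.

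The second step is to trace $\D_s$ through the construction in Proposition~\ref{prop:wls} and confirm the output is $[\wlp]$. Since every $\sigma_i = (1\ 2)$ is an odd involution, none of them contribute cone points to the induced alternating data set, so the latter has the form $(n, g_0'; -)$; comparing the Riemann--Hurwitz equations for $S_g$ coming from $\D_s$ and from $(n, g_0'; -)$, and substituting $\ell = 2(g_0 - 2\overline{g}_0 + 1)$, yields $g_0' = g_0$. Each odd $\sigma_i$ contributes exactly one fixed point of $\overline{G}$ of rotation angle $\pi$, reproducing $D_{\overline{G}} = (2, \overline{g}_0; (1,2)^{[\ell]})$; and since the alternating data set has no cone points, $\Pi_{\overline{G}}$ is trivially the identity. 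This gives $\Psi([\D_s]) = [\wlp]$, and Theorem~\ref{thm:wls} concludes the argument. The only real obstacle is the Riemann--Hurwitz bookkeeping verifying $g_0' = g_0$; the key conceptual observation is that choosing all the $\sigma_i$ to be transpositions forces the associated alternating data set to be free of cone points.
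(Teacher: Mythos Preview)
Your proof is correct and follows essentially the same approach as the paper: both construct the identical symmetric data set $\D_s = (n, \overline{g}_0; [(1\ 2), 2; 2]^{[\ell]})$ and verify that $\Psi([\D_s]) = [\wlp]$ via Proposition~\ref{prop:wls} and Theorem~\ref{thm:wls}. Your version is more detailed---you explicitly compute $\ell$ from Riemann--Hurwitz, exhibit the surface kernel epimorphism, and trace through $\Psi$---whereas the paper simply cites that $\ell$ is even and that condition~(v) of Definition~\ref{defn:sym_data_set} holds.
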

\begin{proof}
Consider the tuple $(n,\overline{g}_0;[(1~2),2;2]^{[\ell]})$. Then by Definition~\ref{defn:cyclic_data_set}, $\ell$ must be even. Hence, $\D_s:=(n,\overline{g}_0;[(1~2),2;2]^{[\ell]})$ satisfies the the conditions of a symmetric data set, since $\prod_{i = 1}^{\ell} (1~2)$ is an even permutation. Finally, It follows from Proposition \ref{prop:wls} that $\Psi([\D_s])=[\wlp]$.
\end{proof}
\begin{rem}
\label{rem:extension}
It is well-known~\cite[Chapter 11]{Robinson} that when $n \neq 6$, any extension of $\Z_2$ by $A_n$ is isomorphic to either $\Sigma_n$ or $A_n \times \Z_2$. In other words, the short exact sequence $$1\to A_n \to H \to \Z_2 \to 1$$ always splits when $n \neq 6$.
\end{rem}
\noindent Thus, we obtain the following result, which follows directly from Theorem~\ref{thm:wls} and Remark~\ref{rem:extension}.
\begin{theorem}\label{thm:alternating_extensions}
For $n \neq 6$, consider an $H<\map(S_g)$ such that $H\cong A_n$ and let $\D_a$ be an alternating data set representing the weak conjugacy class of the $H$-action. Then there exists an $H'<\map(S_g)$ that is weakly conjugate to $H$ and an $H''<\map(S_g)$ with $[H'':H']=2$ if and only if  $\wlp$ is a weak-liftable pair.  Furthermore:
\begin{enumerate}[(i)]
\item $H''$ can be chosen to be isomorphic to $\Sigma_n$ if and only if  $\wlp$ forms a WLS-pair and
\item $H''$ can be chosen to be isomorphic to $A_n \times \Z_2$ if $\wlp$ is a weak-liftable pair that is not WLS-pair.
\end{enumerate} 
\end{theorem}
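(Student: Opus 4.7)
The plan is to establish the main biconditional first, then derive (i) and (ii) using Theorem~\ref{thm:wls} together with Remark~\ref{rem:extension}.

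For the forward direction of the main claim, I would assume that $H'$ weakly conjugate to $H$ and $H''$ with $[H'':H']=2$ exist. Since index-two subgroups are normal, $H' \lhd H''$ and $H''/H' \cong \Z_2$. Every element of $H''$ normalizes $H'$, so $H'' < \mathrm{SMod}(S_g)$ in the Birman--Hilden sequence cited just before Proposition~\ref{prop:main4}, and its image in $\mathrm{LMod}(\Orb_{H'})$ is a generator $\overline{G}$ of an order-two subgroup that lifts under $S_g \to \Orb_{H'}$ by exactness. Applying Theorem~\ref{thm:zieschang} to the automorphism $\overline{G}_*$ of $\pi_1^{orb}(\Orb_{H'})$ supplies the permutation $\Pi_{\overline{G}}$, while the intrinsic cyclic data $D_{\overline{G}}$ of $\overline{G}$ on $S_{g_0}$ is automatic; together these show $\wlp$ is a weak-liftable pair. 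For the converse, Proposition~\ref{prop:main4} supplies an $H' \in [\D_a]$ and a lift $G$ of $\overline{G}$, and setting $H'' := \langle H' \cup \{G\}\rangle$ realizes an extension with $[H'':H']=2$, since $G \notin H'$ but $G^2 \in H'$.

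For part (i), the direction \emph{WLS-pair $\Rightarrow H'' \cong \Sigma_n$} is immediate from the definition of a WLS-pair: one chooses $H'$ and $G$ witnessing the WLS condition and sets $H'' = \langle H' \cup \{G\}\rangle$. Conversely, if some $H''$ in the biconditional can be chosen isomorphic to $\Sigma_n$, then Proposition~\ref{prop:main2} associates to $H''$ a symmetric data set $\D_s$, and tracing the construction of $\Psi$ in the proof of Proposition~\ref{prop:wls} (applied to $H''/H' \cong \Z_2$ descending to the intermediate orbifold $\Orb_{H'}$) shows that $\Psi([\D_s]) = [\wlp]$, so Theorem~\ref{thm:wls} declares $\wlp$ to be a WLS-pair. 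Part (ii) then follows from Remark~\ref{rem:extension}: since $n \neq 6$, the extension $1 \to A_n \to H'' \to \Z_2 \to 1$ is isomorphic to either $\Sigma_n$ or $A_n \times \Z_2$, and if $\wlp$ is weak-liftable but not a WLS-pair, part (i) rules out $\Sigma_n$, forcing $H'' \cong A_n \times \Z_2$.

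The principal technical step I expect to spend effort on is the forward direction of (i), namely verifying that the symmetric data set $\D_s$ attached to $H''$ is indeed sent by $\Psi$ to the equivalence class $[\wlp]$. This requires revisiting the proof of Proposition~\ref{prop:wls} to match: (a) the odd/even cycle-type accounting in the passage from $\D_s$ to $\D_a$, against the $\D_a$ we started with; (b) the cyclic data $D_{\overline{G}}$, computed from the $\ell$ odd permutations in $\D_s$ and the Riemann--Hurwitz equation, against the $D$ in $\wlp$; and (c) the permutation $\Pi_{\overline{G}}$ as acted on by $\overline{G}_*$, up to the conjugation equivalence of Definition~\ref{defn:eq_wls_pairs}. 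Everything else is either a direct consequence of Birman--Hilden, a definitional unwinding of weak-liftable/WLS-pair, or an invocation of Remark~\ref{rem:extension} to close off the two possible extension types.
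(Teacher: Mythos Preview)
Your proposal is correct and follows essentially the same approach as the paper. The paper does not give an explicit proof but simply asserts that the result ``follows directly from Theorem~\ref{thm:wls} and Remark~\ref{rem:extension}''; your write-up supplies exactly the details that this sentence leaves implicit, invoking the Birman--Hilden sequence and Proposition~\ref{prop:main4} (which together underlie Theorem~\ref{thm:wls}) for the main biconditional, Theorem~\ref{thm:wls} itself for part~(i), and Remark~\ref{rem:extension} to force the dichotomy needed for part~(ii).
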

\begin{exmp} 
For $n \geq 6$, consider the free alternating action $\D_a=(n,4;-)$ from Example ~\ref{exmp:exmp4}. Taking $D_{\overline{G}}=(2,2;(1,2),(1,2))$ and $\Pi_{\overline{G}}= id$, we consider two non-equivalent symmetric data sets $\D_s=(n,2;[(1~2),2;2]^{[2]}$) and $\D_s'=(n,2;[(1~2)(3~4)(5~6),2;2,2,2]^{[2]}).$
It can be easily seen that $\Psi([\D_s]) = \Psi([\D_s']) = [\wlp]$. Hence, there can exist conjugate liftable involutions $\overline{G}$ and $\overline{G}'$ under the branched cover $S_{g} \to S_{g}/A_n$ with $\Pi_{\overline{G}}=\Pi_{\overline{G}'}$ that lift to symmetric actions that are not weakly conjugate.
\end{exmp}
\begin{exmp}[Icosahedral action]
Consider the $A_5$-action on $S_{19}$ from Example \ref{exmp:exmp1} with weak conjugacy class represented by 
\begin{center}
$\D_a=(5,0;[(1~2)(3~4),2;2,2]^{[2]}, [(1~5~4~3~2),5;5], [(1~2~3~4~5),5;5])$.
\end{center}
\noindent Since $\Orb_H$ is a sphere for any $H \in [\D_a]$, any action on $\Orb_H$ has cyclic data set $D=(2,0;(1,2)^{[2]})$. By Proposition~\ref{prop:admissible_permutation}, any liftable involution can only induce permutations $\Pi_{\overline{G}_1}=(1~2)$, $\Pi_{\overline{G}_2}=(3~4)$, and $\Pi_{\overline{G}_3}=(1~2)(3~4)$ in $\Sigma_4$, and the quotient orbifolds of these actions have signatures $(0;2,10,10)$, $(0;4,4,5)$, and $(0;2,2,2,5)$, respectively. It can be verified that the signature $(0;2,10,10)$ does not form a symmetric data set of degree $5$ and genus $19$. Furthermore, by Proposition~\ref{prop:main1}, the signatures $(0;4,4,5)$ and $(0;2,2,2,5)$ give rise to unique symmetric data sets, up to equivalence, given by
$\D_s = (5,0;[(1~5~2~4),4;4], [(2~4~3~5),4;4], [(1~2~3~4~5),5;5])$ and 
$\D_s' = (5,0;[(1~4),2;2], [(1~5),2;2], [(1~3)(2~4),2;2,2], [(1~2~4~5~3),5;5]),$
respectively, such that
$\Psi([\D_s])=[(\D_a,(D_{\overline{G}_2}, \Pi_{\overline{G}_2}))], \, \Psi([\D_s'])= [(\D_a,(D_{\overline{G}_3}, \Pi_{\overline{G}_3}))]$, and $D_{\overline{G}_2}=D_{\overline{G}_3}=D$. The two $\Z_2$-actions $D_{\overline{G}_2}$ and $D_{\overline{G}_3}$ on $\Orb_H$ are shown in Figure \ref{fig:fig4}, where labeled pair $(z,n)$ represents a cone point $z$ along with its order $n$.
\begin{figure}[ht]
\begin{subfigure}{0.4\textwidth}
\labellist
\small
\pinlabel $(x_1,2)$ at 248 410
\pinlabel $\pi$ at 210 480
\pinlabel $\G_2$ at 120 480
\pinlabel $(x_2,2)$ at 248 40
\pinlabel $(x_3,5)$ at -65 250
\pinlabel $(x_4,5)$ at 415 250
\endlabellist
\centering
   \includegraphics[scale=.23]{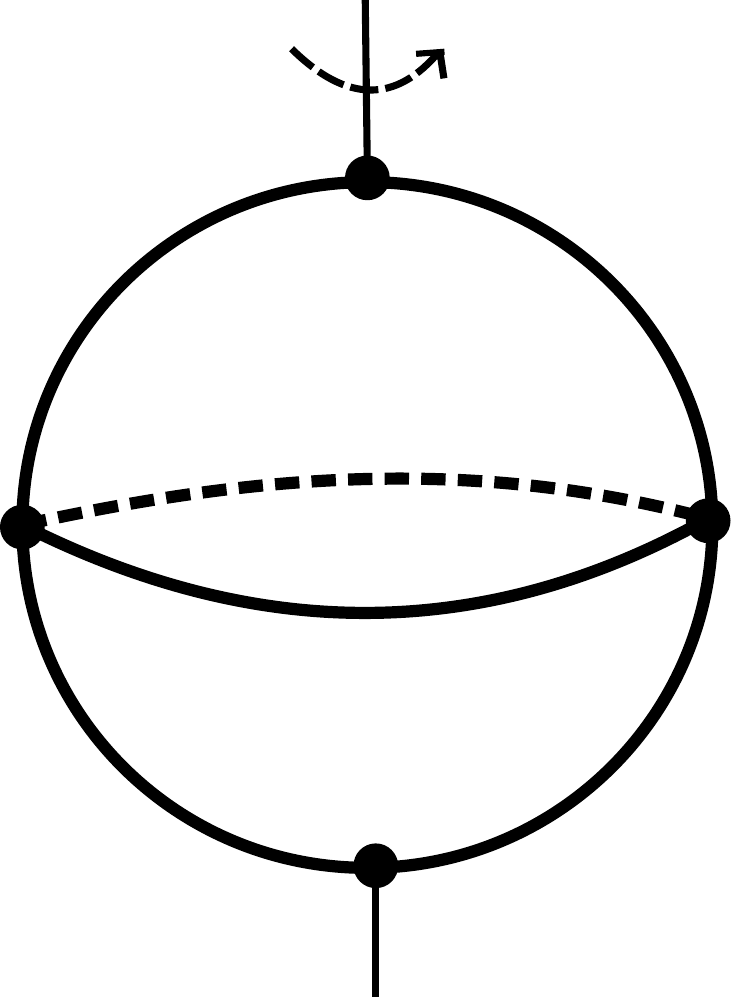}
\end{subfigure}
\begin{subfigure}{0.4\textwidth}
\labellist
\small
\pinlabel $(y_1,2)$ at 242 280
\pinlabel $(y_2,2)$ at 178 152
\pinlabel $\pi$ at 210 480
\pinlabel $\G_3$ at 120 480
\pinlabel $(y_3,5)$ at -65 210
\pinlabel $(y_4,5)$ at 410 210
\endlabellist
\centering
   \includegraphics[scale=.23]{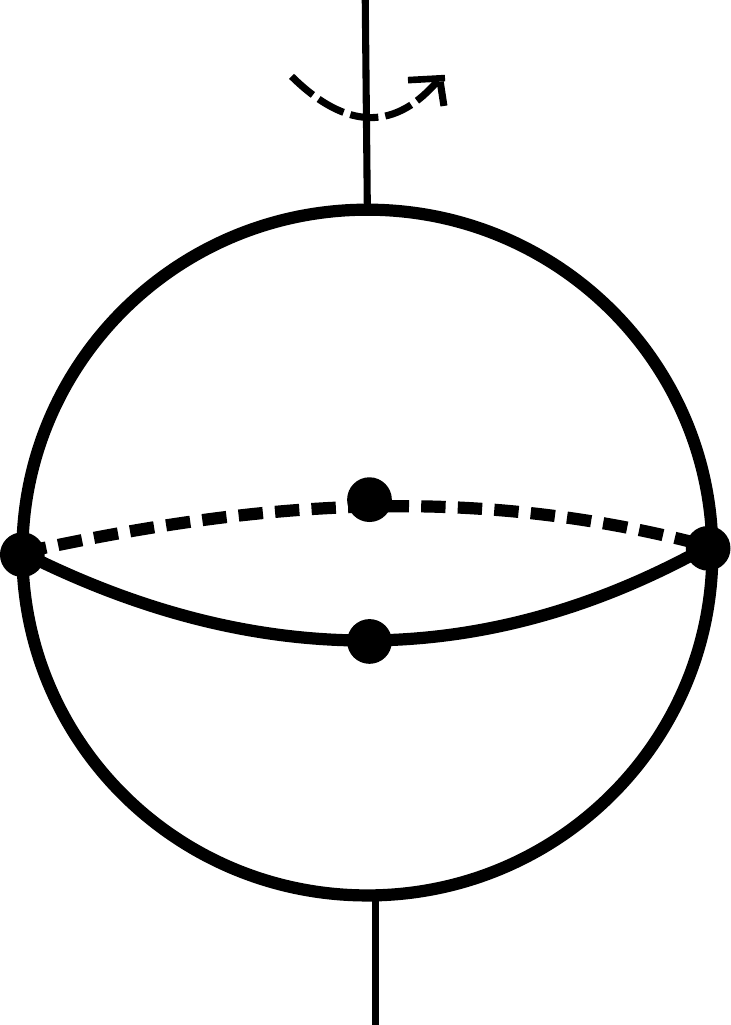}
\end{subfigure}
\caption{The two $\Z_2$-actions on $\Orb_H$ induced by $D_{\overline{G}_2}$(left) and $D_{\overline{G}_3}$(right).}
\label{fig:fig4}
  \end{figure}
\end{exmp}
\begin{exmp}[Dodecahedral action]
Consider the $A_5$-action on $S_{11}$ as in Example \ref{exmp:exmp2} represented by $\D_a=(5,0;[(1 3)(2 4),2;2,2]^{[2]},[(3 5 4),3;3],[(3 4 5),3;3])$.  By Proposition~\ref{prop:admissible_permutation}, any liftable involution can only induce permutations $\Pi_{\overline{G}_1}=(1~2)$, $\Pi_{\overline{G}_2}=(3~4)$, and $\Pi_{\overline{G}_3}=(1~2)(3~4)$ in $\Sigma_4$, whose quotient orbifolds have signatures $(0;2,6,6)$,$(0;3,4,4)$, and $(0;2,2,2,3)$, respectively. It can be verified that there exists no $\Sigma_5$-action on $S_{11}$ with quotient orbifold signature $(0;2,2,2,3)$. However, Proposition~\ref{prop:main1}, the signatures $(0;2,6,6)$ and $(0;3,4,4)$ give rise to unique symmetric data sets, up to equivalence, given by $$\D_s = (5,0;[(1~4)(2~3~5),6;2,3],[(1~2)(3 ~4~5),6;2,3],[(1~3)(2~4),2;2,2)$$ and $\D_s' = (5,0;[(1~4~3~2),4;4],[(1~2~3~5),4;4],[(3~4~5),3;3)$, respectively, such that
$\Psi([\D_s])=[(\D_a,(D_{\overline{G}_1}, \Pi_{\overline{G}_1}))]$ and $\Psi([\D_s'])= [(\D_a,(D_{\overline{G}_2}, \Pi_{\overline{G}_2}))]$, and $D_{\overline{G}_1}=D_{\overline{G}_2}=(2,0;(1,2)^{[2]})$.
\end{exmp}
\begin{exmp}[Cubic action]
Consider the $A_4$-action on $S_{5}$ as in Example \ref{exmp:alt_cube} represented by
\begin{center}
$\D_a=(4,0;[(1~2~3),3;3],[(1~2~3),3;3],[(2~3~4),3;3],[(2~3~4),3;3])$.
\end{center}
Any liftable involution can only induce permutations $\Pi_{\overline{G}_1}=(1~3)$, $\Pi_{\overline{G}_2}=(1~3)(2~4)$ in $\Sigma_4$ which amount to signatures $(0;3,6,6)$ and $(0;2,2,3,3)$, respectively. But there exist no $\Sigma_4$-action on $S_7$ with quotient orbifold signature $(0;3,6,6)$. However, by Proposition~\ref{prop:main1}, the signature $(0;2,2,3,3)$ gives rise to a unique symmetric data set, up to equivalence, given by $\D_s = (4,0;[(1~2),2;2],[(3~4),2;2],[(1~2~3),3;3],[(2~3~4),3;3])$ such that $\Psi([\D_s])=[(\D_a,(D_{\overline{G}_2},\Pi_{\overline{G}_2}))]$, where $D_{\overline{G}_2}=(2,0;(1,2)^{[2]})$.
\end{exmp}
\begin{exmp}[Octahedral action]
Consider the $A_4$-action on $S_7$ as in Example \ref{exmp:alt_octa} represented by $\D_a^1=(4,1;[(1~2)(3~4),2;2,2]^{[2]})$. Any involution in $\map(\Orb_H)$ for $H \in \D_a^1$ has a conjugacy class represented by either $D_1=(2,0;(1,2)^{[4]})$ or $D_2=(2,1;-)$. Any liftable involution in the class $D_1$ can only induce permutations $\Pi_{\overline{G}_1}= id$ and $\Pi_{\overline{G}_2}=(1~2)$ in $\Sigma_2$ whose quotient orbifolds have signatures $(0;2,2,4,4)$ and $(0;2,2,2,2,2)$, respectively. These signatures give rise to unique symmetric data sets given by $$\D_s = (4,0;[(1~2~3~4),4;4],[(1~4~3~2),4;4],[(2~3),2;2],[(2~3),2;2])$$ and $\D_s' = (4,0;[(1~2),2;2],[(2~3),2;2],[(2~3),2;2],[(3~4),2;2],[(1~2)(3~4),2;2,2])$, respectively, such that
$\Psi([\D_s])=[(\D_a,(D_{\overline{G}_1}, \Pi_{\overline{G}_1}))]$ and $\Psi([\D_s'])= [(\D_a,(D_{\overline{G}_2}, \Pi_{\overline{G}_2}))]$, and $D_{\overline{G}_1}=D_{\overline{G}_2}= D_1$.

Now, any liftable involution with associated cyclic data set $D_2$ can only induce the permutation $\Pi_{\overline{G}_3}= id$ whose quotient orbifold signature is $(1;2)$. But there exists no $\Sigma_4$-action on $S_7$ with quotient orbifold signature $(1;2)$. Hence $(\D_a^1,(D_{\overline{G}_3},\Pi_{\overline{G}_3}))$ with $D_{\overline{G}_3}=D_2$ does not form a WLS-pair. But, it can be verified using Proposition \ref{prop:main4} that $(\D_a^1,(D_{\overline{G}_3},\Pi_{\overline{G}_3}))$ forms a weak-liftable pair, resulting in an $A_4\times \Z_2$-action on $S_7$.
\end{exmp}

\begin{exmp}
Consider another $A_4$-action on $S_7$ represented by $$\D_a^2=(4,0;[(1~2)(3~4),2;2,2]^{[2]},[(2~3~4),3;3]^{[3]}).$$ Any involution in $\map(\Orb_H)$ for $H \in \D_a^2$ has a unique cyclic data set $D_{\overline{G}}=(2,0;(1,2)^{[2]})$ and an induced permutation $\Pi_{\overline{G}}= (1~2)(3~4) \in \Sigma_5$ with quotient orbifold signature $(0;2,2,3,6)$. One can check, as before, that $(\D_a^2,(D_{\overline{G}},\Pi_{\overline{G}}))$ does not form a WLS-pair but it forms a weak-liftable pair that results in an $A_4\times \Z_2$-action on $S_7$.
\end{exmp}

\section{Embeddable cyclic actions inside $\Sigma_n$ and $A_n$-action}\label{section:embedding} In this section, we show that a hyperelliptic involution or an irreducible periodic mapping class is not contained in a symmetric (or an alternating) subgroup of $\map(S_g)$. To begin with, we have the following technical proposition that provides an asymptotic bound for the order of an element in such a subgroup.
\begin{prop}\label{prop:bound}
Let $H\leq \mathrm{Mod}(S_g)$ such that $H\cong \Sigma_n \text{(or } A_n)$. Then, for sufficiently large $n$, any element in $H$ has order less than $g/k$, for some $k\in \mathbb{N}$.
\end{prop}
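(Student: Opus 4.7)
The plan is to juxtapose two asymptotic estimates in $n$: a sub-factorial upper bound on the order of any element of $\Sigma_n$ (which automatically bounds elements of $A_n \leq \Sigma_n$) against a factorial lower bound on $g$ coming from Riemann--Hurwitz. Since the second dwarfs the first, for any fixed $k \in \N$ one obtains $g/k$ larger than the maximum element order for all sufficiently large $n$.

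For the upper bound I would invoke Landau's function
$$g_L(n) := \max\bigl\{\lcm(\lambda_1,\dots,\lambda_s) : \lambda_1 + \cdots + \lambda_s = n,\ \lambda_i \in \N\bigr\},$$
which is classically known to coincide with the maximum order of an element of $\Sigma_n$ and to satisfy $\log g_L(n) \sim \sqrt{n \log n}$; in particular $g_L(n) = (n!)^{o(1)}$. For the lower bound I would apply Theorem~\ref{thm:riemann}(iii) to any $H$-action with $H \cong \Sigma_n$ or $A_n$. Since $g \geq 2$ forces the quotient orbifold to be hyperbolic, its orbifold Euler characteristic satisfies $-\chi^{orb} \geq 1/42$, the classical minimum attained by the $(2,3,7)$-triangle orbifold, so Riemann--Hurwitz yields
$$2g - 2 \;=\; |H|\Bigl(2g_0 - 2 + \sum_{i=1}^r \bigl(1 - \tfrac{1}{m_i}\bigr)\Bigr) \;\geq\; \tfrac{|H|}{42},$$
whence $g \geq 1 + n!/168$ (using $|H| \geq n!/2$).

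Combining the two inequalities, for any fixed $k \in \N$ one has $g/k \geq (1 + n!/168)/k$, and since $n!$ grows faster than any function of the form $g_L(n)$, this eventually exceeds the maximum order of any element of $H$. The main (and in fact only) obstacle is justifying the use of the $1/42$ bound, which requires verifying that the quotient orbifold is genuinely hyperbolic for all large $n$. This holds because the orbifold fundamental group of a non-hyperbolic $2$-orbifold is either finite of order at most $60$ (in the spherical case) or virtually abelian (in the Euclidean case); neither admits $A_n$ or $\Sigma_n$ as a quotient once $n$ is large enough, ruling out the non-hyperbolic signatures. With that in place, the asymptotic comparison completes the proof.
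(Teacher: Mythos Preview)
Your proposal is correct and follows essentially the same approach as the paper: both compare a sub-factorial upper bound on element orders in $\Sigma_n$ (Landau's function) against a factorial-scale lower bound on $g$ coming from Hurwitz's $84(g-1)$ inequality, and conclude by the disparity in growth rates. The paper cites Hurwitz directly and uses the crude estimate $g_L(n) < e^{n/e}$, while you derive Hurwitz via the $1/42$ orbifold bound and invoke the sharper asymptotic $\log g_L(n) \sim \sqrt{n\log n}$, but the argument is the same; your closing paragraph on ruling out non-hyperbolic quotient orbifolds is unnecessary, since the standing hypothesis $g\geq 2$ already forces $\chi^{orb}(\mathcal{O}_H)=\chi(S_g)/|H|<0$.
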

\begin{proof}
Let $H\cong \Sigma_n$, and let $g(n)$ denote the largest order of an element in $\Sigma_n$. Then it is known that $g(n) < e^{n/e}$. Then it easily seen that the sequence $f(n):=\frac{n!}{e^{n/e}}$ diverges to $+\infty$. Since, $\frac{n!}{g(n)}>\frac{n!}{e^{n/e}}=f(n)$, it follows that $\Bigl\{\frac{n!}{g(n)}\Bigr\}$ also diverges to $+\infty$.

On the contrary, suppose we assume that there exists an $F \in H$ such that $|F|>g/k$, for some fixed $k$. Since $|H|\leq 84(g-1)$ (see~\cite{hurwitz}), we have  
    $$\frac{n!}{g(n)}\leq \frac{|H|}{|F|} < \frac{84(g-1)}{g/k} <84k,$$
which contradicts the fact that $\Bigl\{\frac{n!}{g(n)}\Bigr\}$ diverges to $+\infty$. Therefore, we have $|F|\leq g/k$ for sufficiently large $n$. An analogous argument also works when $H\cong A_n$.
\end{proof}
\begin{cor}
For $n\geq5$, let $H\leq \mathrm{Mod}(S_g)$ such that $H\cong \Sigma_n$ (or $A_n$). Then, there is no irreducible mapping class in $H$.
\end{cor}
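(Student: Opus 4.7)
\emph{Plan.} I argue by contradiction. Suppose $F \in H$ is an irreducible periodic mapping class, so that $S_g/\langle F\rangle$ is a sphere with three cone points of orders $(m_1,m_2,m_3)$ and $|F|=\lcm(m_1,m_2,m_3)$. The Riemann--Hurwitz formula applied to the $\langle F\rangle$-action gives
$$2g-2 \;=\; |F|\Bigl(1-\sum_{i=1}^{3}\tfrac{1}{m_i}\Bigr).$$
Since $g\geq 2$, the right-hand side is positive, forcing $\sum 1/m_i<1$ and in particular $1-\sum 1/m_i<1$. Hence $|F|>2g-2$.

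For the matching upper bound on $|F|$, the Hurwitz inequality $|H|\leq 84(g-1)$ gives $2g-2 \geq |H|/42 \geq n!/84$ (using $|H|\geq n!/2$). Since every element of $H$ has order at most Landau's function $g(n)$, the maximal order of an element of $\Sigma_n$, one would need $g(n)>n!/84$, a relation that fails for every $n\geq 6$ by a direct numerical check. Alternatively, Proposition~\ref{prop:bound} applied with $k=2$ gives $|F|\leq g/2 < 2g-2$ for all sufficiently large $n$. Either way, this closes the argument for $n\geq 6$.

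For $n=5$ I proceed by direct enumeration. The possible orders of $F$ lie in $\{2,3,5\}$ for $A_5$ and in $\{2,\ldots,6\}$ for $\Sigma_5$. For each such value, listing the triples $(m_1,m_2,m_3)$ with $\lcm(m_i)=|F|$ and imposing Riemann--Hurwitz integrality with $g\geq 2$ leaves exactly two candidates: $(5,5,5)$ on $S_2$ (for $A_5$) and $(3,6,6)$ on $S_2$ (for $\Sigma_5$). The second is ruled out immediately by Hurwitz, since $|\Sigma_5|=120 > 84 = 84(2-1)$, so $\Sigma_5\not\leq\map(S_2)$. For the first, a short enumeration of orbifold signatures $(g_0;n_1,\ldots,n_r)$ with $n_i\in\{2,3,5\}$ shows that the Riemann--Hurwitz equation $(2-2\cdot 2)/60=-1/30 = 2-2g_0-\sum(1-1/n_i)$ admits no solution, so $A_5\not\leq\map(S_2)$ either. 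This small-$n$ arithmetic is the only delicate step; everything else is routine.
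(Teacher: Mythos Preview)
Your proof is correct and follows the same strategy as the paper's: Riemann--Hurwitz forces $|F|$ to be large relative to $g$ (the paper obtains the slightly sharper $|F|>2g$, but your $|F|>2g-2$ suffices), while the Hurwitz bound $|H|\leq 84(g-1)$ together with Landau's function bounds it above. Your explicit treatment of $n=5$---ruling out $A_5$ and $\Sigma_5$ on $S_2$ directly---is in fact more careful than the paper, whose one-line appeal to Proposition~\ref{prop:bound} technically covers only sufficiently large $n$; the small imprecisions in your write-up (the ``direct numerical check'' of $g(n)\leq n!/84$ for all $n\geq 6$ is an infinite check, handled by combining $g(n)\leq e^{n/e}$ for $n\geq 7$ with the single case $n=6$; and $(5,5,5)$ is also a $\Sigma_5$-candidate on $S_2$, dispatched by the same Hurwitz argument) are easily patched and do not affect the argument.
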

\begin{proof}
Let $F \in \mathrm{Mod}(S_g)$ be an irreducible mapping class. Then, it directly follows from the Riemann-Hurwitz Equation that $|F|> 2g$. Hence, by Proposition \ref{prop:bound}, it follows that $F \notin H$ for $n \geq 5$.
\end{proof}

\begin{prop}
Let $H\leq \mathrm{Mod}(S_g)$ such that $H\cong \Sigma_n$ (or $A_n$) for $n\geq8$. Then $H$ does not contain hyperelliptic involution.
\end{prop}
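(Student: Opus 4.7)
The plan is to assume for contradiction that $F \in H$ is a hyperelliptic involution and derive a contradiction by describing $C_H(F)/\langle F\rangle$ in two ways. First, on the surface side, I would invoke the Birman--Hilden theorem: the centralizer of $F$ in $\map(S_g)$ fits into a short exact sequence
\[
1 \longrightarrow \langle F\rangle \longrightarrow C_{\map(S_g)}(F) \longrightarrow \map(S_{0,2g+2}) \longrightarrow 1.
\]
Hence $C_H(F)/\langle F\rangle$ embeds into $\map(S_{0,2g+2})$, and by Nielsen realization any finite subgroup of the latter acts by conformal automorphisms on $S^2$ preserving the $2g+2$ marked points. Such an action realizes the group as a finite subgroup of $\mathrm{PSL}_2(\mathbb{C})$, so by the classical classification it is one of $\Z/m$, $D_m$, $A_4$, $\Sigma_4$, or $A_5$.

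Next, on the group side, let $\sigma \in \Sigma_n$ (or $A_n$) correspond to $F$ under the isomorphism $H \cong \Sigma_n$ (or $A_n$). Then $\sigma$ is a nontrivial involution of cycle type $(2^k,1^{n-2k})$ with $1 \le k \le n/2$ (and $k$ even when $H \cong A_n$, so that $\sigma \in A_n$). A direct centralizer computation gives
\[
C_{\Sigma_n}(\sigma)/\langle\sigma\rangle \;\cong\; \bigl((\Z/2)^{k-1}\rtimes \Sigma_k\bigr)\times \Sigma_{n-2k},
\]
of order $2^{k-1}k!(n-2k)!$, and $|C_{A_n}(\sigma)/\langle\sigma\rangle| = 2^{k-2}k!(n-2k)!$. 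I would then verify that $A_4$ embeds into this quotient for every admissible $k$ when $n \ge 8$. If $n-2k \ge 4$, then $A_4 \subseteq \Sigma_{n-2k}$, acting on the fixed-point set and hence automatically lying in $A_n$. Otherwise $k \ge 3$: either $k=3$, in which case $(\Z/2)^2 \rtimes \Sigma_3 \cong \Sigma_4 \supseteq A_4$; or $k \ge 4$, in which case $A_4 \subseteq \Sigma_k$ acts as a block-permutation on the pairs, which is always an even permutation of $\{1,\dots,2k\}$ and hence lies in $A_n$. In every case the embedded $A_4$ intersects $\langle\sigma\rangle$ trivially, so it sits inside both $C_{\Sigma_n}(\sigma)/\langle\sigma\rangle$ and $C_{A_n}(\sigma)/\langle\sigma\rangle$.

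To close, I would combine the two descriptions. Since $A_4$ is not a subgroup of any cyclic or dihedral group, the only finite subgroups of $\mathrm{PSL}_2(\mathbb{C})$ containing $A_4$ are $A_4$, $\Sigma_4$, and $A_5$. The first step therefore forces $|C_H(F)/\langle F\rangle| \in \{12, 24, 60\}$. On the other hand, a short check over the admissible parameters $(n,k)$ with $n \ge 8$ shows that neither $2^{k-1}k!(n-2k)!$ nor $2^{k-2}k!(n-2k)!$ ever attains any of these three values --- the minimum is $48$ in each case, attained at $(n,k) = (8,3)$ for $\Sigma_n$ and at $(n,k) = (8,2)$ for $A_n$, and all other admissible values exceed $60$. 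This contradiction completes the proof.

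The main obstacle will be the bookkeeping in the two verifications in Step 2 and Step 3, namely exhibiting $A_4$ inside every admissible quotient and ruling out the orders $12, 24, 60$ from the centralizer formulas. Both reduce to finite case analyses once the explicit structure of $C_{\Sigma_n}(\sigma)$ and $C_{A_n}(\sigma)$ is in hand.
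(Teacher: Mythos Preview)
Your argument is correct and takes a genuinely different route from the paper's proof. The paper argues by fixed-point counting: after observing that the quotient orbifold $S_g/H$ must have genus~$0$ (since $S_g/\langle F\rangle$ has genus~$0$ and covers it), it computes the number $2g+2$ of fixed points of the hyperelliptic involution in two ways---once from the Riemann--Hurwitz formula for the $H$-action, and once from the branching formula of Lemma~\ref{lem:subaction}---and then shows by a short case analysis on the number $r$ of cone points (the cases $r=3$, $r=4$, $r\geq 5$) that these two counts cannot agree when $n\geq 8$.

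Your approach instead exploits the structure of the centralizer. Passing through Birman--Hilden and Nielsen realization for the marked sphere, you identify $C_H(F)/\langle F\rangle$ with a finite subgroup of $\mathrm{PSL}_2(\mathbb{C})$, and then use elementary group theory in $\Sigma_n$ and $A_n$ to show that this quotient always contains a copy of $A_4$ once $n\geq 8$, forcing it to be one of $A_4$, $\Sigma_4$, $A_5$. The contradiction then comes from the explicit centralizer orders $2^{k-1}k!(n-2k)!$ and $2^{k-2}k!(n-2k)!$, which for $n\geq 8$ are always at least~$48$ and never equal $12$, $24$, or~$60$. This is a clean structural argument that avoids the orbifold bookkeeping entirely and makes transparent why the threshold $n\geq 8$ arises (for smaller $n$ the centralizer quotient can genuinely be one of the platonic groups). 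The paper's argument, by contrast, stays within the combinatorial framework developed earlier and does not need Birman--Hilden or the classification of finite rotation groups, at the cost of a slightly more delicate estimate.
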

\begin{proof}
Suppose that $H\cong \Sigma_n$ with weak conjugacy class represented by a symmetric data set $\D_s$ as in Definition~\ref{defn:sym_data_set}. Since the hyperelliptic involution has quotient orbifold genus zero, it is clear that if the orbifold $S_g/H$ has positive genus, then no hyperelliptic involution can be inside $H$. Hence, we shall assume from here on that $g_0=0$. Then, the Riemann-Hurwitz equation would imply that
$$2+2g=4+n!\biggl(r-2-\sum_{i=1}^{r}\frac{1}{m_i}\biggr).$$
Let $X$ denote the expression in the right-hand side of the preceding equation and denote the hyperelliptic involution $\sigma\in \mathrm{Mod}(S_g)$, we have $D_{\sigma}=\left(2,0; (1,2)^{[2g+2]}\right)$. Now, let $$Y=|C_{\Sigma_n}(\sigma)| \cdot \sum\limits_{\substack{ \sigma\sim_{\Sigma_n} \sigma_i\\1\leq i\leq r}}\frac{1}{m_i}$$ in the notation of Proposition~\ref{prop:cyclicgen}. To rule out the possibility of $g_0=0$, it suffices to establish that $X>Y$ for $n\geq8$, since both the expessions compute the number of fixed points of $\sigma$. We break the argument into the following three cases.

\noindent\textbf{Case (i): $r\geq 5$.} Let $X_1:=n!(\frac{r}{2}-2)$ and $Y_1:= 2(n-2)!\cdot(r-2)$. Then it follows that $X>X_1>Y_1>Y$ since $n(n-1)(\frac{r}{2}-2)>2(r-2).$

\noindent\textbf{Case (ii): $r=4$.} Let $X_1:=n!\left[4-2-\left(\frac{1}{2}+\frac{1}{2}+\frac{1}{2}+\frac{1}{3}\right)\right]=\frac{n!}{6}$ and $Y_1:= 4(n-2)!$. As $n(n-1)>24$, we have $X>X_1>Y_1>Y$. 

\noindent\textbf{Case (iii): $r=3$.} Let $X_1:=4+n!\left[3-2-\left(\frac{1}{2}+\frac{1}{3}+\frac{1}{7}\right)\right]=\frac{n!}{42}$ and $Y_1:= (n-2)!$. Since $n(n-1)>42,$ we have $X>X_1>Y_1>Y$.
\end{proof}

\section{Classification of the weak conjugacy classes in $\map(S_{10})$ and $\map(S_{11})$.}\label{section:table} In this section, we will classify the weak conjugacy classes of the alternating and symmetric subgroups of $\map(S_{10})$ and $\map(S_{11})$ using Theorems \ref{thm:main1} and \ref{thm:main2}. Our choice of $S_{10}$ and $S_{11}$ is motivated by the fact that these surfaces admit a variety of alternating and symmetric actions up to weak conjugacy. With slight abuse of notation, we will denote $\sigma,\tau$ as the standard generating pair for both $\Sigma_n$ and $A_n$ with the understanding that  $\sigma=\sigma_s,\tau=\tau_s$, if the group is $\Sigma_n$, and $\sigma=\sigma_a,\tau=\tau_a$, if the group is $A_n$. All computations have been made using software written for Mathematica 13.1.0~\cite{mathematica}.
\begin{landscape}
\begin{table}[h]
    \centering
    \renewcommand{\arraystretch}{1.6}
    \resizebox{22cm}{!}{
    \begin{tabular}{||c||c||c||}\hline\hline
\textbf{Groups} & \textbf{Weak conjugacy classes in Mod($S_{10}$)} & \textbf{Cyclic factors $[D_{\sigma};D_{\tau}]$}\\
\hline
\hline
\multirow{3}{*}{$A_4$} & $(4,0;[(1 4)(2 3),2;2,2]^{[3]},[(1 2 4),3;3],[(1 3 2),3;3]^{[2]})$ & $[(3,3;(2,3)^{[3]});(3,3;(1,3)^{[3]})]$\\
\cline{2-3} & \makecell{$(4,1;[(1 2)(3 4),2;2,2]^{[3]})$} & $[(3,4;-);(3,4;-)]$\\
\hline
$A_5$ & $(5,0;[(1 5)(2 4),2;2,2],[(2 4)(3 5),2;2,2],[(2 3)(4 5),2;2,2],[(1 2 3 4 5),5;5])$ & $[(3,4;-);(5,2;(1,5),(4,5))]$\\
\hline
$A_6$ & $(6,0;[(1 2)(4 6),2;2,2],[(1 2 4 3)(5 6),4;4,2],[(2 3 4 5 6),5;5])$ & $[(3,4;-);(5,2;(1,5),(4,5))]$\\
\hline
\multirow{2}{*}{$\Sigma_4$} & $(4,0;[(2 3),2;2],[(1 2 4 3),4;4],[(1 2 3 4),4;4]^{[2]})$ & $[(2,5;(1,2)^{[2]});(4,1;(1,4)^{[3]},(3,4)^{[3]})]$\\
\cline{2-3} & \makecell{$(4,0;[(1 4)(2 3),2;2,2],[(2 4),2;2],[(3 4),2;2]^{[2]},[(1 2 3 4),4;4])$} & $[(2,4;(1,2)^{[6]});(4,2;(1,4),(3,4),(1,2)^{[2]})]$ \\
\hline
\hline
    \end{tabular}}
    \caption{The weak conjugacy classes of alternating and symmetric subgroups of $\map(S_{10})$.}
    \vspace{1.3cm}
\resizebox{22cm}{!}{
\begin{tabular}{||c||c||c||}\hline \hline
\textbf{Groups} & \textbf{Weak conjugacy classes in Mod($S_{11}$)} & \textbf{Cyclic factors $[D_{\sigma};D_{\tau}]$}\\
\hline\hline
$A_4$ & $(4,0;[(1 2)(3 4),2;2,2]^{[2]},[(1 2 4),3;3],[(1 3 4),3;3]^{[2]},[(2 3 4),3;3])$ & $[(3,3;(1,3)^{[2]},(2,3)^{[2]});(3,3;(1,3)^{[2]},(2,3)^{[2]}))]$ \\
\hline
$A_5$ & $(5,0;[(1 3)(2 4),2;2,2]^{[2]},[(3 5 4),3;3],[(3 4 5),3;3])$ & $[(3,3;(1,3)^{[2]},(2,3)^{[2]});(5,3;-)]$  \\
\hline
\multirow{2}{*}{$\Sigma_4$} & $(4,0;[(1 3 4),3;3],[(2 3 4),3;3],[(1 2 3 4),4;4]^{[2]})$ & $[(2,6;-);(4,2;(1,4)^{[2]},(3,4)^{[2]})]$\\
\cline{2-3} & \makecell{$(4,0;[(1 2),2;2],[(1 3),2;2],[(1 4)(2 3),2;2,2],[(1 4 3),3;3]^{[2]})$} & $[(2,5;(1,2)^{[4]});(4,3;(1,2)^{[2]})]$  \\
\hline
\multirow{2}{*}{$\Sigma_5$} & $(5,0;[(1 3 2),3;3], [(1 2 5 4),4;4], [(2 3 4 5),4;4])$ & $[(2,6;-);(5,3;-)]$\\
\cline{2-3} & \makecell{$(5,0;[(1 4)(2 3),2;2,2], [(1 3 5)(2 4),6;3,2], [(1 2)(3 4 5),6;3,2])$} & $[(2,5;(1,2)^{[4]});(5,3;-)]$  \\
\hline\hline
    \end{tabular}}
    \caption{The weak conjugacy classes of alternating and symmetric subgroups of $\map(S_{11})$.}
\end{table}
\end{landscape}
\section*{Acknowledgements} The authors would like to thank Dr. Siddhartha Sarkar and Dr. Apeksha Sanghi for helpful discussions.
\bibliographystyle{plain}
\bibliography{main}
\end{document}